\theoremstyle{plain}
\newtheorem{theorem}{Theorem}[section]
\newtheorem{corollary}[theorem]{Corollary}
\newtheorem{lemma}[theorem]{Lemma}
\newtheorem{proposition}[theorem]{Proposition}
\newtheorem{remark}[theorem]{Remark}
\newtheorem{assumption}[theorem]{Assumption}
\theoremstyle{definition}
\newtheorem{definition}{Definition}[section]
\newcommand{\conj}[1]{\overline{#1}}
\newcommand{\operator}[1]{\mathcal{#1}}
\newcommand{\norm}[1]{\|#1\|}
\newcommand{\dx}{{\,\mathrm{d}\vect x}}
\newcommand{\dt}{{\,\mathrm{d}t}}
\newcommand{\dS}{{\,\mathrm{d}S}}
\newcommand{\Div}[1]{\nabla \cdot #1}
\newcommand{\Grad}[1]{\nabla #1}
\newcommand{\Lap}[1]{\Delta #1}
\newcommand{\Gradbound}[1]{\Grad _{\partial \Omega} {#1}}
\newcommand{\TimeDer}{{\partial^{}_t}}
\newcommand{\SecondTimeDer}{{\partial^2_{tt}}}
\newcommand{\NormDer}{{\partial_{\bm{n}}}}
\newcommand{\vect}[1]{\bm{#1}}
\newcommand{\normal}{\bm{n}}
\newcommand{\mat}[1]{\underline{\underline{\mathrm{#1}}}}
\newcommand{\R}{\mathbb{R}}
\newcommand{\N}{\mathbb{N}}
\newcommand{\Ll}{{F_{\ell}}}
\newcommand{\weakto}{{\rightharpoonup}}
\newcommand{\wrt}{{\textrm{w.r.t.\ }}}
\newcommand{\LI}{L_{\mathrm I}}
\newcommand{\LD}{L_{\mathrm D}}
\newcommand{\Vs}{{V_{\star}}}
\newcommand{\Ws}{{W_{\star}}}
\newcommand{\deltaI}{{\delta_{\mathrm I}}}
\newcommand{\deltaD}{{\delta_{\mathrm D}}}
\newcommand{\GD}{{\Gamma_{\mathrm D}}}
\newcommand{\GI}{{\Gamma_{\mathrm I}}}
\newcommand{\SD}{{\Sigma_{\mathrm D}}}
\newcommand{\SI}{{\Sigma_{\mathrm I}}}
\newcommand{\gD}{{g_{\mathrm D}}}
\newcommand{\gI}{{g_{\mathrm I}}}
\newcommand{\deO}{{\partial\Omega}}
\newcommand{\oL}{{\operator{L}}}
\newcommand{\oM}{{\operator{M}}}
\newcommand{\oW}{{\operator{W}}}
\newcommand{\vx}{{\vect x}}
\newcommand{\vn}{{\vect n}}
\newcommand{\OT}{{\Omega_T}}
\newcommand{\OZ}{{\Omega_0}}
\newcommand{\GradGD}{{\Grad_{\GD}}}
\newcommand{\GradGI}{{\Grad_{\GI}}}
\newcommand{\HoneImp}{{H^1_{\mathrm I}}}
\pgfplotsset{
        table/search path={figures, data}, legend style={font=\footnotesize}, tick label style={font=\footnotesize},
    }
\pgfplotsset{compat=newest}
\colorlet{Vcolor}{green!65!black}
\tikzstyle{L2error style}=[red, mark=square, line width=.8pt]
\tikzstyle{H1error style}=[blue, mark=star, line width=.8pt]
\tikzstyle{Verror style}=[Vcolor, mark=triangle, line width=.8pt]
\tikzstyle{L2bestapprox style}=[red, dashed]
\tikzstyle{H1bestapprox style}=[blue, dashed]
\tikzstyle{Vbestapprox style}=[Vcolor, dashed]
\tikzstyle{L2order style}=[red, dotted]
\tikzstyle{H1order style}=[blue, dotted]
\tikzstyle{Vorder style}=[Vcolor, dotted]
\newcommand{\logLogSlopeTriangle}[5]
{
    \pgfplotsextra
    {
        \pgfkeysgetvalue{/pgfplots/xmin}{\xmin}
        \pgfkeysgetvalue{/pgfplots/xmax}{\xmax}
        \pgfkeysgetvalue{/pgfplots/ymin}{\ymin}
        \pgfkeysgetvalue{/pgfplots/ymax}{\ymax}

        \pgfmathsetmacro{\xArel}{#1}
        \pgfmathsetmacro{\yArel}{#3}
        \pgfmathsetmacro{\xBrel}{#1-#2}
        \pgfmathsetmacro{\yBrel}{\yArel}
        \pgfmathsetmacro{\xCrel}{\xArel}

        \pgfmathsetmacro{\lnxB}{\xmin*(1-(#1-#2))+\xmax*(#1-#2)}
        \pgfmathsetmacro{\lnxA}{\xmin*(1-#1)+\xmax*#1}
        \pgfmathsetmacro{\lnyA}{\ymin*(1-#3)+\ymax*#3}
        \pgfmathsetmacro{\lnyC}{\lnyA+#4*(\lnxA-\lnxB)}
        \pgfmathsetmacro{\yCrel}{\lnyC-\ymin)/(\ymax-\ymin)}

        \coordinate (A) at (rel axis cs:\xArel,\yArel);
        \coordinate (B) at (rel axis cs:\xBrel,\yBrel);
        \coordinate (C) at (rel axis cs:\xCrel,\yCrel);

        \draw[#5]   (A)-- node[pos=0.5,anchor=north] {1}
        (B)--
        (C)-- node[pos=0.5,anchor=west] {#4}
        cycle;
    }
}
\newcommand{\logLogCondSlopeTriangle}[5]
{
    \pgfplotsextra
    {
        \pgfkeysgetvalue{/pgfplots/xmin}{\xmin}
        \pgfkeysgetvalue{/pgfplots/xmax}{\xmax}
        \pgfkeysgetvalue{/pgfplots/ymin}{\ymin}
        \pgfkeysgetvalue{/pgfplots/ymax}{\ymax}

        \pgfmathsetmacro{\xArel}{#1}
        \pgfmathsetmacro{\yArel}{#3}
        \pgfmathsetmacro{\xBrel}{#1-#2}
        \pgfmathsetmacro{\yBrel}{\yArel}
        \pgfmathsetmacro{\xCrel}{\xArel}

        \pgfmathsetmacro{\lnxB}{\xmin*(1-(#1-#2))+\xmax*(#1-#2)}
        \pgfmathsetmacro{\lnxA}{\xmin*(1-#1)+\xmax*#1}
        \pgfmathsetmacro{\lnyA}{\ymin*(1-#3)+\ymax*#3}
        \pgfmathsetmacro{\lnyC}{\lnyA+#4*(\lnxA-\lnxB)}
        \pgfmathsetmacro{\yCrel}{\lnyC-\ymin)/(\ymax-\ymin)}
        \pgfmathsetmacro{\rateText}{abs(#4)}

        \coordinate (A) at (rel axis cs:\xArel,\yArel);
        \coordinate (B) at (rel axis cs:\xBrel,\yBrel);
        \coordinate (C) at (rel axis cs:\xCrel,\yCrel);

        \draw[#5]   (A)-- node[pos=0.5,anchor=north] {1}
        (B)--
        (C)-- node[pos=0.5,anchor=east] {\rateText}
        cycle;
    }
}
\title{A space--time continuous and coercive formulation for the wave equation}
\author{Paolo Bignardi\thanks{Dipartimento di Matematica ``F. Casorati'', Universit\`a di Pavia, Italy, \texttt{paolo.bignardi01@universitadipavia.it}}
    \and
    Andrea Moiola\thanks{Dipartimento di Matematica ``F. Casorati'', Universit\`a di Pavia, Italy, \texttt{andrea.moiola@unipv.it}}
}
\date{\today}
\begin{document}

\maketitle

\begin{abstract}
We propose a new space--time variational formulation for wave equation initial--boundary value problems.
The key property is that the formulation is coercive (sign-definite) and continuous in a norm stronger than $H^1(Q)$, $Q$ being the space--time cylinder.
Coercivity holds for constant-coefficient impedance cavity problems posed in star-shaped domains, and for a class of impedance--Dirichlet problems.
The formulation is defined using simple Morawetz multipliers and its coercivity is proved with elementary analytical tools, following earlier work on the Helmholtz equation.
The formulation can be stably discretised with any $H^2(Q)$-conforming discrete space, leading to quasi-optimal space--time Galerkin schemes.
Several numerical experiments show the excellent properties of the method.

\medskip
\textbf{Keywords:} \;
Wave equation,
Variational formulation,
Coercivity,
Sign-deﬁniteness,
Morawetz multiplier,
Morawetz identity,
Quasi-optimality,
Galerkin method,
Finite element method

\medskip
\textbf{Mathematics Subject Classification:} \; 35L05, 65M60, 65M15
\end{abstract}

\section{Introduction}
\subsection{Space--time methods for wave problems}

Most numerical schemes for the approximation of initial--boundary value problems (IBVPs) for evolution PDEs rely on separate discretisations of the space and time variables, i.e.\ either the method of lines or Rothe's method.
\emph{Space--time methods}, instead, consist of simultaneous discretisations of both variables.
Even if they may require the solution of large algebraic systems, space--time schemes can be advantageous in terms of local mesh refinement, adaptivity, parallelisation, treatment of moving boundaries and interfaces, and to obtain accurate approximations at all time instants.
While the first methods of this kind date to the late 1960's \cite{F1969,O1969},
space--time schemes were studied extensively only more recently; see e.g.\ \cite{LangerSteinbach2019,Oberwolfach22} for recent overviews.

Restricting ourselves to linear transient \emph{wave} problems, we note that the majority of space--time methods belong to the discontinous-Galerkin (DG) family, e.g.\ \cite{MoRi05}, or require discontinuous test functions, e.g.\ \cite{French1993}.
Fewer space--time \emph{conforming} Galerkin methods have been proposed: e.g.\
the stabilised FEM and isogeometric schemes in \cite{FraschiniLoliMoiolaSangalli2023,SteinbachZank2019stabilized,Zank2021higherorder},
the formulation involving a trial-to-test ``transformation operator'' in \cite[sect.~5]{steinbach2020coercive},
the first-order systems in \cite{BalesLasiecka1994}, and the first-order system of least squares (FOSLS) in \cite{FuhrerGonzalezKarkulik2025}.
We also mention the method proposed in \cite{HePaSiUr2022}, which is conforming with respect to an ultra-weak variational formulation, and uses $C^1$-continuous test and discontinuous trial functions.

Most of these formulations (an exception is the FOSLS in \cite{FuhrerGonzalezKarkulik2025}) impose strong constraints on the discrete spaces in order to obtain stable methods.
For instance, \cite[sect.~5]{steinbach2020coercive} requires piecewise multi-linear ($\mathbb Q^1$) spaces on tensor-product meshes satisfying a CFL condition;
the stabilised formulations in  \cite{SteinbachZank2019stabilized,Zank2021higherorder} and  \cite{FraschiniLoliMoiolaSangalli2023} apply to tensor-product spline spaces (piecewise $\mathbb Q^p$ with global $C^k$ continuity), but numerical experiments show that \cite{SteinbachZank2019stabilized,Zank2021higherorder} is unconditionally stable only for spaces with $C^0$ regularity in time (and not for $C^1$ or higher), while \cite{FraschiniLoliMoiolaSangalli2023} only for spaces with highest regularity ($k=p-1$).
These are strong limitations if one wants, for instance, to locally adapt (either a priori or a posteriori) the computational mesh or the discrete space to achieve better accuracy.

Ideally, one would like to have a space--time variational problem similar to the classical formulation of the Poisson--Dirichlet boundary value problem, for which any $H^1_0$-conforming discrete space gives well-posed, stable, and quasi-optimal Galerkin schemes, thanks to Lax--Milgram theorem and C\'ea's lemma.
This holds because the classical Poisson variational formulation is coercive (also called sign-definite, or elliptic).
On the other hand, the standard space--time variational formulation for the wave equation is not even inf-sup stable, see \cite[Thm.~1.1]{steinbach2022generalized}.
At a first glance, finding a space--time variational formulation for wave IBVPs that are continuous and coercive in the same norm, to ensure the applicability of Lax--Milgram and C\'ea, without resorting to least-squares schemes, seems hopeless.
However, also for the Helmholtz equation (i.e.\ the acoustic wave equation in frequency domain $\Delta u+k^2u=f$, for large $k>0$) this might appear impossible, but a coercive formulation has been proposed in \cite{Moiola2014} and implemented in \cite{DiwanMoiolaSpence2019}.
This is possible thanks to the use of Morawetz multipliers, which are special test functions that have been used to analyse wave problems, prove energy estimates and describe the solution decay, since the work of Cathleen S.~Morawetz in the 1960's \cite{morawetz1961decay,morawetz1975decay}.

\subsection{A coercive formulation that uses Morawetz multipliers}
The goal of this paper is to use techniques similar to those in \cite{Moiola2014} to derive a space--time variational formulation of some IBVPs that is continuous and coercive in a given norm.

The IBVPs considered are the impedance problem for the acoustic wave equation $\SecondTimeDer{u}-c^2\Delta u=f$ with constant wave speed $c$, and a mixed impedance--Dirichlet problem for the same PDE.
The impedance condition $\NormDer u + (\theta c)^{-1}\TimeDer{u} = \gI$ is the simplest absorbing boundary condition in acoustics, and corresponds to imposing that the fluid normal velocity on an obstacle is proportional to the acoustic pressure.

The formulation is obtained by integrating by parts and choosing as test function the linear-coefficient, first-order multiplier $\oM v(\vx,t):=-\xi\vx\cdot\nabla v(\vx,t)+\beta(t-T^*)\TimeDer v(\vx,t)$, where $\xi,\beta,T^*$ are real parameters.
The formulation, defined in \eqref{eq:def:bilin}--\eqref{eq:def:lin} and \eqref{eq:vpV} in the pure impedance case, only involves integrals (on the space--time cylinder $Q$ and parts of its boundary) of trial and test functions, their partial derivatives, volume and boundary data, together with affine coefficients.

Theorem~\ref{thm:coerc_b} and Proposition \ref{prop:continuity_b} prove the coercivity and the continuity of the formulation in a norm \eqref{eq:norm_def} stronger than (space--time) $H^1$.
All estimates are proved using only elementary vector calculus tools (such as Cauchy--Schwarz and weighted inequalities, the knowledge of the sign of the normal component of the position vector on the space boundary) and are explicit in all parameters.
We refer to Remark~\ref{remark:V=W?} for a more delicate point concerning the function space in which the variational problem is posed; this is related to the density of smooth functions in the graph space associated with the coercivity norm.

In the pure impedance case, coercivity holds for bounded, Lipschitz, star-shaped domains, see Assumption~\ref{ass:StarS}.
More generally, star-shaped impedance cavities with star-shaped Dirichlet obstacles are allowed, see Assumption~\ref{ass:StarSD}.
This kind of restrictions is common when Morawetz multipliers are used and is related to the absence of trapped rays; see e.g.~\cite[sect.~6]{Moiola2014} and \cite{morawetz1961decay}.

This formulation can be stably discretised with \emph{any} $H^2(Q)$-conforming finite element space giving well-posed and quasi-optimal schemes.
This is the ideal starting point to develop efficient solvers, matrix-compression techniques, and adaptive algorithms.
We demonstrate the excellent numerical performance of the formulation with several experiments involving cubic spline spaces: we propose simple recipes for the choice of the numerical parameters, we show that the method is unconditionally stable and parameter-robust, it provides errors close to the best-approximation, it has little dissipation and it can approximate rough solutions.
The Matlab code implemented is freely available online.
This work is part of the PhD project of the first author: see the dissertation \cite{Bignardi2025},
which includes the extension of the formulation to more general problems, the a-posteriori error analysis, an adaptive scheme, and a related $C^0$-interior penalty method.
More extensive and challenging experiments will be reported in a subsequent work.

\subsection{Outline of the paper}
In section~\ref{sec:IBVPandRegularity} we state the IBVP under consideration, study in detail its well-posedness in the impedance case and prove a regularity result on its solution, adapting a Faedo--Galerkin technique from~\cite{ladyzhenskaya-bvp}.

In section~\ref{sec:abstract_framework}, we propose an abstract framework for coercive formulations arising from multiplier techniques.
This framework includes both what is proposed in the remainder of the paper for the wave equation, and the coercive formulation for the Helmholtz equation in \cite{Moiola2014}.

Section~\ref{sec:Identities} contains the pointwise and integral identities related to the Morawetz multipliers.
These identities are proved using only the elementary vector calculus product rules and integration by parts.

Section~\ref{sec:interior_problem} contains the main results for the interior impedance problem.
In particular, it contains the variational formulation and the corresponding norm, the proofs of the (explicit) coercivity and continuity bounds for the bilinear and the linear form.
In section~\ref{sec:NumImplications} we describe some consequences that concern Galerkin discretisations: we estimate the quasi-optimality constant and the Galerkin matrix condition number, show that conforming discrete spaces essentially coincide with $C^1(\overline Q)$-conforming ones, and study the energy-dissipation properties of the formulation.
Since a possible obvious criticism of space--time methods concerns the size of the linear systems involved, in Remark~\ref{rem:DOFcount} we also compare the number of degrees of freedom needed to achieve a given accuracy against those required by a time-stepping scheme.

Section~\ref{sec:Scattering} extends the previous definitions and results to the more general case involving Dirichlet conditions on part of the boundary (e.g.\ a sound-soft scatterer in an impedance cavity).

In section~\ref{sec:numExperiments} we show several numerical experiments for a cubic-spline discretisation of the proposed formulation.
We study the sensitivity of the formulation on some numerical parameters in its definitions, the optimality of the convergence rates, the approximation of smooth and singular solutions, the sharpness of the quasi-optimality estimates, the approximation of the solution energy.

Finally, in section~\ref{sec:Conclusions} we draw some conclusions and list several possible extensions of the proposed formulations and open problems.

\section{Model problem, well-posedness and solution regularity}\label{sec:IBVPandRegularity}

\subsection{Model problem}\label{sec:IBVP}

Let $\Omega \subset \R^d$ be a Lipschitz bounded domain whose boundary is partitioned in (relatively open) impedance and Dirichlet parts as $\deO=\conj{\GI\cup\GD}$, with $\GI\cap\GD=\emptyset$ and $\GI\ne\emptyset$, and let $T>0$.
Define
\begin{align*}
	Q        & := \Omega \times (0,T), \quad            &
	\Omega_t & := \Omega \times \{t\}, \quad            &
	\Sigma_t & := \Sigma \times \{t\},        \\
	\SI      & := \GI \times (0,T), \quad               &
	\SD      & := \GD \times (0,T),\quad                &
    \Sigma   & :=\overline{\SI\cup\SD}=\deO\times[ 0,T ],
\end{align*}
where $t \in [0,T]$. Consider the interior impedance--Dirichlet problem
\begin{equation}\label{eq:IBVP}
	\begin{aligned}
\SecondTimeDer{u} -c^2 \Lap u            & = f \qquad   &  & \text{on } Q,       \\
		\NormDer u  + (\theta c)^{-1}\TimeDer{u} & = \gI \qquad &  & \text{on }\SI,      \\
		u                                        & = \gD \qquad &  & \text{on }\SD,      \\
		u                                        & = u_0        &  & \text{on }\Omega_0, \\
		\TimeDer{u}                              & = u_1        &  & \text{on }\Omega_0,
	\end{aligned}
\end{equation}
where the wavespeed $c > 0$ and the impedance parameter $\theta > 0$ are constants, and $f,\gI,\gD, u_0$ and $u_1$ are appropriate functions defined on $Q$, $\SI$, $\SD$, $\Omega_0$ and $\Omega_0$, respectively.
We denote by $\Delta$ ($\Grad$ and $\Div$) the Laplacian (the gradient and the divergence, respectively) in the space variable only.
The notation $\NormDer$ denotes the normal derivative $\NormDer u=\vn\cdot\Grad u$ on $\Sigma$, where $\vn$ is the (space) outward-pointing unit normal field on $\partial\Omega$.
We indicate with $\GradGI$ and $\GradGD$ the tangential gradient over $\GI$ and $\GD$, respectively. Moreover, let $\LD := \max\{|\vx|: \vx\in\GD\}$ and $\LI := \max\{|\vx|: \vx\in\GI\}$.
We allow the case $\GD=\emptyset$, which corresponds to the impedance cavity problem.

\subsection{Generalised solutions and well-posedness of the impedance problem}
\label{sec:Regularity}
The content of this section is an extension of the results by Ladyzhenskaya \cite[Ch.~IV]{ladyzhenskaya-bvp} to the case of impedance boundary conditions. Similar results can be proven also using the Laplace transform (see e.g. \cite{ChaumontFrelet2022}) or contraction semigroup theory (see e.g. \cite{ChaumontFreletEtAl2022}).

In this and the next subsection we assume that $\GD=\emptyset$: this is not a strong limitation because the Dirichlet part of the boundary could be treated in the same way as in \cite{ladyzhenskaya-bvp}.
Let
\begin{align*}
	\HoneImp(Q):=&\{u\in H^1(Q): \TimeDer u|_\SI\in L^2(\SI)\},
	\\
	\widehat{H}^1(Q):=&\{u \in H^1(Q): u(\cdot, T) = 0\}.
\end{align*}
We indicate with $(\cdot, \cdot)_\Omega$ the scalar product in $L^2(\Omega)$ and with $(\cdot, \cdot)_{\GI}$ the scalar product in $L^2(\GI)$; for $u, v \in H^1(Q)$,
both scalar products are
function of $t$.
Similarly, $\| \cdot \|_E$ is the $L^2(E)$ norm, where $E$ is any (relatively) open subset of $Q,\partial Q$, or $\partial\Omega$; when $E=\Omega$ or $E=\GI$, then $\|\cdot\|_E$ depends on $t$.
We use the same notation when the argument of the norm is a vector field.

Next we define a class of solutions to problem \eqref{eq:IBVP}.
\begin{definition}\label{def:imp_gsol}
	A function $u\in \HoneImp(Q)$ is a
	\textbf{generalised solution} of problem \eqref{eq:IBVP}
	with $\GD=\emptyset$ if $u = u_0\in H^1(\Omega)$ on $\OZ$ and, for all $v \in \widehat{H}^1(Q)$, the following
	variational problem
	is satisfied
	\begin{equation}
		\label{eq:imp_varprob}
		\int_Q\big( -\TimeDer u \TimeDer v + c^2 \Grad u \cdot \Grad v\big) \dx \dt + \frac{c}{\theta} \int_\SI \TimeDer u v \dS dt = \int_Q f v \dx \dt + c^2 \int_\SI \gI v \dS \dt + \int_{\Omega_0}u_1 v \dx,
	\end{equation}
	where $f \in L^1(0, T; L^2(\Omega))$, $\gI \in L^2(\SI)$ and $u_1 \in L^2(\Omega)$.
\end{definition}
Existence and uniqueness of a generalised solution is guaranteed by the following theorem, which is a modification of Theorems IV.3.1--3.2 in \cite{ladyzhenskaya-bvp} for the Dirichlet problem.

\begin{theorem}[Existence of a solution to the impedance BVP]\label{thm:Existence}
	Problem \eqref{eq:IBVP} admits a solution in the sense of Definition~\ref{def:imp_gsol}. Moreover, such solution is unique.
\end{theorem}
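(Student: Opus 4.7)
The plan is to adapt the Faedo--Galerkin construction of \cite[Ch.~IV]{ladyzhenskaya-bvp}, introducing the modifications needed to handle the impedance term $\frac{c}{\theta}\int_\SI \TimeDer u\,v\dS\dt$ that is absent in the pure Dirichlet case. First I would fix a countable basis $\{w_k\}_{k\in\N}$ of $H^1(\Omega)$ whose traces on $\GI$ lie in $L^2(\GI)$ (for instance, eigenfunctions of the Laplace--Robin problem) and seek Galerkin approximations $u^N(\vx,t)=\sum_{k=1}^N d_k^N(t)w_k(\vx)$ solving the finite-dimensional projection of the strong form of \eqref{eq:IBVP}, with $d_k^N(0)$ and $\dot d_k^N(0)$ obtained from the Galerkin projections of $u_0$ and $u_1$. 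The resulting linear second-order ODE system has constant mass, stiffness, and positive semi-definite damping matrices, so classical ODE theory yields a unique $u^N$ on $[0,T]$.

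The next step is to derive uniform-in-$N$ energy bounds by testing with $\TimeDer{u^N}$: the impedance contribution now produces the \emph{dissipative} term $\frac{c}{\theta}\int_0^t\|\TimeDer{u^N}\|_\GI^2\,\mathrm d s\ge 0$, and a Gr\"onwall argument delivers bounds on $\|\TimeDer{u^N}\|_{L^\infty(0,T;L^2(\Omega))}$, $\|\Grad u^N\|_{L^\infty(0,T;L^2(\Omega))}$, and $\|\TimeDer{u^N}\|_{L^2(\SI)}$ in terms of the norms of $f,\gI,u_0,u_1$. Extracting a weakly-$\star$ convergent subsequence and passing to the limit in the variational identity tested against functions $v\in\widehat H^1(Q)$ of the form $\sum_{k=1}^M\varphi_k(t)w_k(\vx)$ with $\varphi_k(T)=0$, then extending by density to all $v\in\widehat H^1(Q)$, produces a generalised solution $u\in\HoneImp(Q)$; the initial condition $u(\cdot,0)=u_0$ is recovered from the weak time-continuity of $u^N$ and the initialisation of the coefficients.

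For uniqueness I would apply Ladyzhenskaya's dual-test trick. Let $w$ be the difference of two generalised solutions, so the data vanish; fix $s\in(0,T]$ and test \eqref{eq:imp_varprob} with
\begin{equation*}
v(\vx,t):=\begin{cases} \displaystyle\int_t^s w(\vx,\tau)\,\mathrm d\tau & 0\le t\le s, \\ 0 & s<t\le T,\end{cases}
\end{equation*}
which lies in $\widehat H^1(Q)$ since $v(\cdot,T)=0$. Using $\TimeDer v=-w$ on $\{t<s\}$ and integrating by parts in time in the boundary contribution reduces the identity to
\begin{equation*}
\tfrac12\|w(\cdot,s)\|_\Omega^2 \;+\; \tfrac{c^2}{2}\|\Grad v(\cdot,0)\|_\Omega^2 \;+\; \tfrac{c}{\theta}\|w\|_{L^2(\GI\times(0,s))}^2 \;=\; 0,
\end{equation*}
which forces $w(\cdot,s)\equiv 0$ for every $s$.

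The hard part I expect is proving that the trace $\TimeDer u|_\SI$ lies in $L^2(\SI)$, as required by Definition~\ref{def:imp_gsol}: this regularity is \emph{not} a consequence of the standard $H^1(Q)$ trace theorem, and must instead be extracted from the dissipative boundary contribution to the energy estimate and then preserved under the weak limit by lower semicontinuity. A related technical point is choosing the Galerkin basis so that its discrete traces on $\GI$ are well-behaved, and carefully handling the time-wise integration by parts in the asymmetric test space $\widehat H^1(Q)$, whose density properties differ from those of $H^1(Q)$.
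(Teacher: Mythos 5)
Your proposal follows essentially the same route as the paper: a Faedo--Galerkin construction adapted from Ladyzhenskaya, energy estimates obtained by testing with $\TimeDer{u^N}$ in which the impedance term is dissipative and yields the uniform bound on $\|\TimeDer{u^N}\|_{\SI}$, weak compactness and passage to the limit against separated test functions, and Ladyzhenskaya's dual-test trick for uniqueness (which the paper only cites but which you correctly spell out). The only cosmetic difference is the choice of basis (the paper uses Neumann eigenfunctions of $-\Lap$ rather than Robin ones), which does not affect the argument.
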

\begin{proof}
	The proof relies on the same argument of \cite[Thm.~IV.3.2]{ladyzhenskaya-bvp}, and is essentially the same except for the stability estimates.
	Let $\{\varphi_k\}_{k\in\N}$ be the eigenfunctions of $-\Lap$ in $\Omega$ with Neumann boundary conditions, and recall that they are orthogonal in $H^1$ and orthonormal in $L^2$. Let $u^N(x, t) = \sum_{k=1}^N \hat{u}_k^N(t)\varphi_k(x)$ be the solution of the finite-dimensional ODE system \begin{equation}
		\label{eq:Ngalerkin}
		(\SecondTimeDer u^N, \varphi_k)_\Omega + c^2 (\Grad u^N, \Grad \varphi_k)_\Omega + \frac{c}{\theta}(\TimeDer u^N, \varphi_k)_{\deO}
		= (f, \varphi_k)_\Omega + c^2 (\gI, \varphi_k)_{\deO} \qquad \forall k=1, \dots, N
	\end{equation}
	with initial conditions
	\begin{equation*}
		\hat{u}_k^N(0) = (u_0, \varphi_k)_{H^1(\Omega)}, \qquad \TimeDer \hat{u}_k^N(0) = (u_1, \varphi_k)_\Omega \qquad \forall k = 1,\dots,N.
	\end{equation*}
Multiplying each equation by $\TimeDer \hat{u}_k^N(t)$ and summing over $k = 1, \dots, N$, yields
	\begin{equation*}
		\frac12 \TimeDer \|\TimeDer u^N\|^2_\Omega + c^2 \frac12 \TimeDer \|\Grad u^N\|^2_\Omega + \frac{c}{\theta}\|\TimeDer u^N\|^2_{\GI} = (f, \TimeDer u^N)_\Omega + c^2 (\gI, \TimeDer u^N)_{\GI} \qquad \forall t\in[0,T],
	\end{equation*}
Then, the weighted Young inequality yields
	\begin{equation}
		\label{eq:est0}
		\frac12 \TimeDer \|\TimeDer u^N\|^2_\Omega + c^2 \frac12 \TimeDer \|\Grad u^N\|^2_\Omega +\frac12\frac{c}{\theta}\|\TimeDer u^N\|^2_{\GI}
		\le \frac12 T\|f\|^2_\Omega
		+\frac1{2 T}\|\TimeDer u^N\|^2_\Omega
		+\frac{c^3\theta}2 \|\gI\|^2_\GI
		\qquad \forall t \in [0, T]
	\end{equation}
	which in turn yields, neglecting the term $\|\TimeDer u^N\|_\GI$ and using Gronwall's inequality \cite[sect.~B.2.j]{EvansPDE},
\begin{equation}\label{eq:est1}
\|\TimeDer u^{ N}\|_{\Omega_t}^2 +  c^2\|\Grad u^{ N}\|_{\Omega_t}^2\le
e^1\big(\|u_1\|_\OZ^2+c^2\|\Grad u_0\|_\OZ^2+ T\|f\|_Q^2+c^3\theta\|\gI\|_\SI^2\big)\qquad\forall t \in[0,T].
\end{equation}
Note that the right-hand side in \eqref{eq:est1} is independent of $N$.

	To obtain a bound for $\|\TimeDer u^N\|_\GI$, integrating \eqref{eq:est0} over $[0,t]$, and dropping $\|\TimeDer u^N\|^2_{\Omega_t}$ and $\|\Grad u^N\|^2_{\Omega_t}$ from the left-hand side, yields
	\begin{equation*}
		\frac{c}{\theta}\int_0^t \|\TimeDer u^N\|_\GI^2 \mathrm{d}s
		\le \|f\|^2_Q
		+\|\TimeDer u^N\|^2_Q
		+c^3\theta \|\gI\|^2_\SI
		+\frac12\|\Grad u_0\|^2_\OZ
		+\frac12\|u_1\|^2_\OZ
		\qquad \forall t \in (0, T].
	\end{equation*}
	Then, $\|\TimeDer u^N\|^2_Q$ can be bounded by $C_1 T$ because of \eqref{eq:est1}, so it holds that
	\begin{equation*}
		\|\TimeDer u^N\|^2_{\SI} \leq C_2
	\end{equation*}
	where $C_2$ has the same dependences of $C_1$ and is independent of $N$.
The $L^2(Q)$ norm $\|u^N\|_Q$ can be bounded using the initial datum $u_0$ and $\|\TimeDer u\|_Q$, therefore $\|u^N\|_{H^1(Q)}+\|\TimeDer u^N\|_\SI$ is uniformly bounded \wrt $N$.
	Hence, up to a subsequence, $u^N \weakto u$, $\TimeDer u^N \weakto \TimeDer u$ and $\Grad u^N \weakto \Grad u$ in $L^2(Q)$, $\TimeDer u^N \weakto \TimeDer u$ in $L^2(\SI)$ for some $u \in \HoneImp(Q)$. It remains to prove that $u$ is a generalised solution.
	Indeed, let $\mathcal{D}_N := \{v = \sum_{k=1}^N \hat{v}_k(t) \varphi_k(x), \hat{v}_k \in H^1(0,T), \hat{v}_k(T) = 0\}$.
	Multiply each equation in \eqref{eq:Ngalerkin} by $\hat{v}_k$ (as in the definition of $\mathcal{D}_N$), sum across $k = 1, \dots, N$ and integrate over $[0,T]$; integration by parts yields that
	\begin{equation*}
		\int_Q \big(-\TimeDer u^N \TimeDer v + c^2 \Grad u^N \cdot \Grad v \big)\dx \dt - \int_{\Omega_0}\TimeDer u^N v \dx + \frac{c}{\theta} \int_\SI \TimeDer u^N v \dS \dt = \int_Q f v \dx \dt + c^2 \int_\SI \gI v \dS \dt
	\end{equation*}
	holds for all $v \in \mathcal{D}_{M}$, $\forall M\leq N$.
Now let $N \rightarrow \infty$; by weak convergence and because $\TimeDer u^N(\cdot, 0) \rightarrow u_1$ in $L^2(\Omega)$ we obtain
	\begin{equation*}
		\int_Q \big(-\TimeDer u \TimeDer v + c^2 \Grad u \cdot \Grad v \big)\dx \dt - \int_{\Omega_0}u_1 v \dx + \frac{c}{\theta} \int_\SI \TimeDer u v \dS \dt = \int_Q f v \dx \dt + c^2 \int_\SI \gI v \dS \dt
	\end{equation*}
	for all $v \in \mathcal{D}_{M}$, for $M$ fixed. But $\bigcup_{M = 1}^{\infty} \mathcal{D}_M$ is dense in $\widehat{H}^1(Q)$, and therefore the variational identity above holds for all $v \in \widehat{H}^1(Q)$.

	The argument to prove uniqueness is the same as in \cite[Thm.~IV.3.1]{ladyzhenskaya-bvp}.
\end{proof}

\subsection{Regularity of solutions for impedance problem}
We study the regularity of generalised solutions because it allows to establish the relation with classical solutions and to understand under which assumptions on the problem data the solution belongs to the function spaces that will be defined and used in \S\ref{sec:spaces}.
Following \cite[Thm.~IV.4.1]{ladyzhenskaya-bvp}, we control the terms involving second-order time derivatives by repeating the Faedo--Galerkin argument for $\TimeDer u$, while the second-order space derivatives and the tangential gradient are controlled using classical elliptic regularity results at fixed times.
The $C^{1,1}$ regularity assumption on the domain $\Omega$ is used only to ensure that the solution of a Neumann problem with boundary datum in $H^{\frac12}(\GI)$ belongs to $H^2(\Omega)$ (in parts (iv) and (v) of the proof); we expect the same to hold under different assumptions on $\Omega$ such as convexity.

\begin{theorem}
	\label{thm:t_regularity}
	Assume that $\GD=\emptyset$, that $\Omega$ has a $C^{1,1}$ boundary,
	\begin{equation}\label{eq:cond_reg0}
		f\in H^1\big(0,T;L^2(\Omega)\big), \qquad \gI\in H^1\big(0,T;H^\frac12(\GI)\big),
\qquad u_0 \in H^2(\Omega),\qquad u_1 \in H^1(\Omega),
	\end{equation}
	and that the initial and boundary data are compatible, in the sense that \begin{equation}
		\label{eq:compatibility}
		\NormDer u_0 + (c\theta)^{-1} u_1 = \gI(\cdot, 0) \qquad \text{on}\; \partial\Omega.
	\end{equation}
	Then the generalised solution $u$ to problem \eqref{eq:imp_varprob} belongs to $H^2(Q)$ and the trace $\SecondTimeDer u$ to $L^2(\SI)$.

\end{theorem}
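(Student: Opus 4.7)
The strategy is to extend the Faedo--Galerkin argument of Theorem~\ref{thm:Existence} by differentiating once in time, which delivers bounds on the second time derivatives, and then to recover the spatial regularity pointwise in $t$ by a family of elliptic estimates. Differentiating the ODE system \eqref{eq:Ngalerkin} in $t$ and setting $v^N:=\TimeDer u^N$, one obtains a linear system of the same form with $v^N$ in place of $u^N$ and data $\TimeDer f,\TimeDer\gI$ in place of $f,\gI$:
\begin{equation*}
(\SecondTimeDer v^N,\varphi_k)_\Omega + c^2(\Grad v^N,\Grad\varphi_k)_\Omega + \frac{c}{\theta}(\TimeDer v^N,\varphi_k)_\deO = (\TimeDer f,\varphi_k)_\Omega + c^2(\TimeDer\gI,\varphi_k)_\deO.
\end{equation*}
Multiplying by $\TimeDer\hat v_k^N$, summing over $k$, and repeating verbatim the estimates leading to \eqref{eq:est1} would yield $N$-independent bounds for $\|\SecondTimeDer u^N\|_{\Omega_t}$, $c\|\TimeDer\Grad u^N\|_{\Omega_t}$, and $\|\SecondTimeDer u^N\|_\SI$, provided that the initial data $v^N(0)$ and $\TimeDer v^N(0)$ are controlled uniformly in $N$ in $H^1(\Omega)$ and $L^2(\Omega)$ respectively.

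The first bound follows from $u_1\in H^1(\Omega)$ and the $H^1$-orthogonality of $\{\varphi_k\}$; the second is the key step. Evaluating \eqref{eq:Ngalerkin} at $t=0$, integrating by parts the Dirichlet form (which is legitimate thanks to $u_0\in H^2(\Omega)$), the initial acceleration can be written, up to Galerkin projection errors of $u_0, u_1$, as
\begin{equation*}
(\SecondTimeDer u^N(0),\varphi_k)_\Omega = (f(0)+c^2\Lap u_0,\varphi_k)_\Omega - c^2\big(\NormDer u_0+(c\theta)^{-1}u_1-\gI(0),\varphi_k\big)_\deO.
\end{equation*}
The compatibility condition \eqref{eq:compatibility} kills the boundary term identically, yielding the desired uniform bound $\|\SecondTimeDer u^N(0)\|_\Omega\le \|f(0)\|_\Omega+c^2\|\Lap u_0\|_\Omega + o(1)$. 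Passing to weak limits in the Faedo--Galerkin sequence, one concludes $\SecondTimeDer u,\TimeDer\Grad u\in L^2(Q)$ and $\SecondTimeDer u|_\SI\in L^2(\SI)$.

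The remaining spatial second derivatives are recovered pointwise in $t$. For a.e.\ $t\in(0,T)$, $u(\cdot,t)$ is a weak solution of the Neumann problem
\begin{equation*}
-c^2\Lap u(\cdot,t)=f(\cdot,t)-\SecondTimeDer u(\cdot,t)\ \text{in }\Omega, \qquad \NormDer u(\cdot,t)=\gI(\cdot,t)-(c\theta)^{-1}\TimeDer u(\cdot,t)\ \text{on }\GI,
\end{equation*}
whose right-hand side lies in $L^2(\Omega)$ and whose Neumann datum lies in $H^{\frac12}(\GI)$: indeed $\TimeDer u(\cdot,t)\in H^1(\Omega)$ from the previous step (so its trace is in $H^{\frac12}(\deO)$), and $\gI\in H^1(0,T;H^{\frac12}(\GI))$ by assumption. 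The $C^{1,1}$ regularity of $\deO$ then delivers, via classical elliptic regularity, $u(\cdot,t)\in H^2(\Omega)$ with an estimate uniform in $t$, so $u\in L^2(0,T;H^2(\Omega))$. Combined with the time-derivative bounds, this yields $u\in H^2(Q)$. The main obstacle I anticipate is the uniform control of $\SecondTimeDer u^N(0)$: the initial-data projections onto the Neumann eigenbasis (for which $\NormDer\varphi_k=0$), the integration by parts, and the compatibility condition must be combined carefully so that the boundary contribution genuinely vanishes up to controllable projection remainders.
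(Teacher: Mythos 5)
Your overall architecture matches the paper's: differentiate the Galerkin system in time to bound $\SecondTimeDer u^N$ and $\Grad\TimeDer u^N$, then recover the spatial second derivatives from an elliptic Neumann problem at a.e.\ fixed $t$ using the $C^{1,1}$ regularity of $\partial\Omega$. However, there is a genuine gap exactly at the point you flag as "the main obstacle", and your proposed resolution of it does not work. Evaluating \eqref{eq:Ngalerkin} at $t=0$, testing with $\SecondTimeDer u^N(\cdot,0)$ and integrating by parts gives a boundary term of the form
\begin{equation*}
-c^2\big(\NormDer u^N(\cdot,0)+(c\theta)^{-1}\TimeDer u^N(\cdot,0)-\gI(\cdot,0),\,\SecondTimeDer u^N(\cdot,0)\big)_{\GI},
\end{equation*}
in which the first argument involves the \emph{discrete} initial data, not $u_0,u_1$. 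The compatibility condition \eqref{eq:compatibility} holds for the exact data only, so the boundary residual does not vanish identically; and even if it tends to $0$ in $H^{\frac12}(\GI)$, it is paired with the boundary trace of $\SecondTimeDer u^N(\cdot,0)$, which is not controlled by the quantity $\|\SecondTimeDer u^N\|_{\OZ}$ you are trying to bound, so Cauchy--Schwarz on $\GI$ cannot close the estimate. Worse, with the Neumann eigenbasis you propose to keep from Theorem~\ref{thm:Existence} one has $\NormDer\varphi_k=0$, hence $\NormDer u^N(\cdot,0)=0$ on $\deO$, and the residual converges to $(c\theta)^{-1}u_1-\gI(\cdot,0)=-\NormDer u_0$, which is generically nonzero: the boundary term is then of order one, not a "controllable projection remainder".

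The paper's Parts~(i) and (v) are designed precisely to circumvent this. It abandons the Neumann eigenbasis in favour of a fundamental system of $H^2(\Omega)$ and replaces the naive projections of $u_0$ by corrected initial data $w_0^N=v_0^N+z_0^N$, where $z_0^N$ solves the discrete Neumann problem whose datum is the compatibility defect of $(v_0^N,w_1^N)$; this yields the identity \eqref{eq:w0w1z0}, which converts the problematic boundary pairing into the volume pairing $c^2(\Lap z_0^N-z_0^N,\SecondTimeDer u^N(\cdot,0))_\Omega$ with $z_0^N\to0$ in $H^2(\Omega)$, and that volume term can be absorbed by $\|\SecondTimeDer u^N\|_{\OZ}^2$ via Young's inequality. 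Some such corrector construction (or an equivalent device) is indispensable; without it the uniform bound on $\|\SecondTimeDer u^N\|_{\OZ}$, and hence the whole Part~(i), does not go through. The remainder of your argument (the elliptic step and the conclusion $u\in H^2(Q)$) is sound and coincides with the paper's Parts~(ii) and (iv).
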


\begin{proof}
{\bf Part (i): $\SecondTimeDer{u},\Grad \TimeDer{u}\in L^2(Q)$, $\SecondTimeDer u\in L^2(\SI)$. \;}
    Following \cite[Thm.~IV.4.1]{ladyzhenskaya-bvp},
	as in the proof of Theorem~\ref{thm:Existence} we use the Faedo--Galerkin method to prove the existence of a solution $u$ with $\SecondTimeDer u$ and $\Grad \TimeDer{u}$ in $L^2(Q)$, and conclude using the uniqueness of such solution.

	Let $\{\varphi_k\}_k$ be a fundamental system of $H^2(\Omega)$ and $V^N = \mathrm{span}(\varphi_1, \dots, \varphi_N)$.
In Part (v) below we show that there exist $w_0^N, z_0^N, w_1^N\in V^N$ such that $w_0^N\rightarrow u_0$ in $H^2(\Omega)$, $z_0^N\rightarrow0$ in ${H^2(\Omega)}$, $w_1^N\rightarrow u_1$ in $H^1(\Omega)$ and
	\begin{align}\label{eq:w0w1z0}
		\int_\GI (\NormDer w_0^N+(c\theta)^{-1}w_1^N-\gI(\cdot, 0))\varphi_M\dS=\int_\Omega(\Lap z_0^N-z_0^N)\varphi_M\dx\qquad
		\forall M=1,\ldots,N. \end{align}

Consider the Galerkin ODE system defined as in \eqref{eq:Ngalerkin}, with initial conditions $u^N(\cdot, 0)=w^N_0$ and $\TimeDer u^N(\cdot, 0)=w^N_1$, indicating again as $u^N(x, t) = \sum_{k=1}^N \hat u^N_k(t) \varphi_k(x)$ its solution.

Taking the time-derivative of \eqref{eq:Ngalerkin}, multiplying by $\SecondTimeDer{\hat{u}_k^N}(t)$ and summing over $k$, for a.e. $t\in(0,T)$ we obtain
\begin{equation*}
		(\partial^3_{ttt} u^N, \SecondTimeDer{u^N})_\Omega+c^2 (\Grad{\TimeDer{u^N}} ,\Grad \SecondTimeDer{u^N})_\Omega + \frac{c}{\theta} (\SecondTimeDer{u^N}, \SecondTimeDer{u^N})_{\GI}
		= (\TimeDer f, \SecondTimeDer{u^N})_\Omega + c^2 (\TimeDer{\gI}, \SecondTimeDer{u^N})_{\GI}.
	\end{equation*}
Using weighted Young's and Gronwall's inequalities yields
	\begin{align}
		\label{eq:regStability}
		\|\SecondTimeDer u^N\|^2_{\Omega_t} + c^2 \|\Grad \TimeDer u^N\|^2_{\Omega_t} \le C\big(\|\TimeDer f\|^2_Q + \|\SecondTimeDer u^N\|^2_\OZ + c^2 \|\TimeDer \Grad u^N\|^2_{\Omega_0} + c^3\theta\|\TimeDer g\|^2_\SI \big)
	\end{align}
	where $C$ is independent of $N$.
	The norms of $u^N(\cdot, 0)=w_0^N$ and $\TimeDer{u^N}(\cdot, 0)= w_1^N$ can be bounded uniformly \wrt $N$ using
	that they converge to $u_0$ and $u_1$, respectively.
Moreover, $\|\SecondTimeDer u^N\|_\OZ$ can be uniformly bounded with respect to $N$ by testing \eqref{eq:Ngalerkin} against $\SecondTimeDer u^N$, evaluating at $t=0$ and integrating by parts in space to obtain
	\begin{equation*}
		\|\SecondTimeDer u^N\|_\OZ^2 = c^2\big(\Lap u^N(\cdot, 0) + f(\cdot, 0), \SecondTimeDer u^N(\cdot, 0)\big)_\Omega + c^2\big(\NormDer u^N(\cdot, 0)+(c\theta)^{-1}\TimeDer u^N(\cdot, 0)-\gI(\cdot, 0),\SecondTimeDer u^N(\cdot, 0)\big)_\GI.
	\end{equation*}
	The initial conditions of the ODE system, and the relation \eqref{eq:w0w1z0} satisfied by $w_0^N$ and $w_1^N$ imply that
	\begin{equation*}
		\|\SecondTimeDer u^N\|_\OZ^2 = c^2\big(\Lap w_0^N + f(\cdot, 0), \SecondTimeDer u^N(\cdot, 0)\big)_\Omega +
		c^2\big(\Lap z_0^N - z_0^N, \SecondTimeDer u^N(\cdot, 0)\big).
	\end{equation*}
	Using Young's inequality and the fact that $z_0^N\rightarrow0$ in $H^2(\Omega)$, it holds that
	\begin{equation*}
		\|\SecondTimeDer u^N\|_\OZ^2 \le C\big( \|\Lap u_0\|_\OZ^2 + \|f(\cdot, 0)\|_\OZ^2\big),
	\end{equation*}
	where $C$ is independent of $N$.
	Then from \eqref{eq:regStability}, for every $t \in (0,T)$
	\begin{equation*}
		\|\SecondTimeDer{u^N}\|^2_{\Omega_t} + \|\Grad \TimeDer u^N\|^2_{\Omega_t} \leq C,
	\end{equation*}
	where $C$ depends only on $\|f\|_{H^1(0,T;L^2(\Omega))}$, $\|\TimeDer \gI\|_\SI$, $\|u_1\|_{H^1(\OZ)}$, $\|u_0\|_{H^2(\OZ)}$, $c,\theta,T$, therefore $\SecondTimeDer{u^N}$, $\Grad \TimeDer u^N$ are bounded in $L^2(Q)$, uniformly in $N$.
	Similarly, $\|\SecondTimeDer u^N\|^2_{\SI}$ is controlled uniformly.
Then, as in the proof of the existence theorem, the assertion of Part~(i) follows using compactness and showing that the limit of the sequence $u^N$ is a solution of the generalised problem \eqref{eq:imp_varprob}.

{\bf Part (ii): elliptic problem at fixed time. \;}
We now follow the strategy of \cite[Cor.~IV.4.1]{ladyzhenskaya-bvp}.
	Since $u$ is a generalised solution as in Definition~\ref{def:imp_gsol}, \begin{equation*}
    \int_Q \big(\SecondTimeDer{u}\, v + c^2 \Grad u \cdot \Grad v\big) \dx \dt + \frac{c}{\theta} \int_\SI \TimeDer{u}\, v \dS \dt
    = \int_Q f v \dx \dt + c^2 \int_\SI \gI v \dS \dt
	\end{equation*}
	for any $v \in L^2(0, T; H^1(\Omega))$.
	This follows from integration by parts in time of \eqref{eq:imp_varprob} and the density of $\widehat H^1(Q)$ in $L^2(0,T;H^1(\Omega))$.
Assume now that $v(\vx,t) = \chi(t) \phi(\vx)$, where $\chi$ is an arbitrary element of $L^2(0,T)$ and $\phi$ an arbitrary element of $H^1(\Omega)$. Using Fubini's theorem, split the integrals over $Q$ and $\SI$ as integrals over $(0,T)$ and $\Omega$, $\partial \Omega$ respectively. Since $\chi \in L^2(0,T)$ is arbitrary, it follows that, for a.e.~$t\in [0,T]$,
	$u(\cdot,t)$ is solution of the following inhomogeneous elliptic Neumann problem:
	\begin{equation}\label{eq:ellipticVP}
		\int_{\Omega} c^2 \Grad u \cdot \Grad \phi \dx
		= \int_{\Omega} (f - \SecondTimeDer{u})\phi \dx + c^2 \int_{\GI} \big(\gI - (c\theta)^{-1} \TimeDer{u}\big)\phi \dS
		\qquad \forall\phi\in H^1(\Omega).
	\end{equation}
	We recall that, following the notation used so far, each of these integrals depends on $t$ (since $u,f,\gI$ depend on $t$ as well as on $\vect x$).

	{\bf Part (iii): tangential gradient $\GradGI u\in L^2(\SI)$. \;}
	We make use of the elliptic problem \eqref{eq:ellipticVP}.
	For a.e.~$t\in(0,T)$,
	\begin{align*}
    \|\GradGI u\|_\GI
    &\le C \big(\|f-\SecondTimeDer u\|_\Omega+\|\gI-(c\theta)^{-1}\TimeDer u\|_\GI\big)\\
    &\le C \big(\|f\|_\Omega +\|\SecondTimeDer u\|_\Omega+ \|\gI\|_\GI + \|\TimeDer u\|_{H^1(\Omega)}\big)
	\end{align*}
	where $C>0$ is a constant independent of $N$, and the first inequality follows from the elliptic regularity result of Ne\v cas for the Neumann-to-Dirichlet map (see e.g.~\cite[sect.~5.2.1]{necas2011direct}, \cite[Thm.~4.24(ii)]{McLean00}) and the second from the continuity of the Dirichlet trace in $H^1(\Omega)$.
	Then, integrating in $t$,
	$$
	\|\GradGI u\|_\SI \le \tilde C \big(\|f\|_Q +\|\SecondTimeDer u\|_Q+ \|\gI\|_\SI + \|\TimeDer u\|_{L^2(0,T;H^1(\Omega))}\big)
	$$
	which is controlled by the problem data thanks to Part~(i).

	{\bf Part (iv): $u\in H^2(Q)$. \;}
	The only terms that are left to be controlled are the second-order space derivatives.
	We apply the elliptic regularity theorem \cite[Thm.~2.3.3.6]{grisvard1985elliptic}
to \eqref{eq:ellipticVP}: since $\Omega$ is $C^{1,1}$ regular, the volume source $(f(\cdot, t)-\SecondTimeDer u(\cdot, t))\in L^2(\Omega)$, the boundary term $(\gI(\cdot, t)-(c\theta)^{-1}\TimeDer u(\cdot, t))\in H^\frac12(\GI)$ (from the assumptions on $\gI$, Part (i), and the trace theorem), we can control
	$\|u\|_{H^2(\Omega)}$ with the problem data uniformly for a.e.~$t$.
	Integrating in $t$ and recalling that second-order derivatives involving $\TimeDer$ were bounded in Part (i), we control $\|u\|_{H^2(Q)}$ with the problem data.

	{\bf Part (v): existence of $w_0^N,w_1^N,z_0^N$.} \;
	To prove this, first take $v_0^N,v_1^N\in V^N$ such that $v_0^N\rightarrow u_0$ in $H^2(\Omega)$ and $v_1^N\rightarrow u_1$ in $H^1(\Omega)$, thanks to the density of $H^2(\Omega)\subset H^1(\Omega)$.
	By the compatibility condition \eqref{eq:compatibility} and the trace theorem,
\begin{align*}
		\|\NormDer v_0^N + (c\theta)^{-1}v_1^N - \gI(\cdot, 0)\|_{H^\frac12(\GI)}& = \|\NormDer v_0^N + (c\theta)^{-1}v_1^N - \gI(\cdot, 0)-\NormDer u_0-(c\theta)^{-1}u_1+\gI(\cdot, 0)\|_{H^\frac12(\GI)}\\
		&\leq \|v_0^N-u_0\|_{H^2(\Omega)}+(c\theta)^{-1}\|v_1^N-u_1\|_{H^1(\Omega)}\rightarrow0.
	\end{align*}
	Define $z_0^N\in V^N$ as the solution of the following discrete Neumann problem:
\begin{equation*}
		\int_\Omega (\Grad z_0^N \cdot \Grad \varphi_M + z_0^N\varphi_M) \dx = \int_\GI -\big(\NormDer v_0^N+(c\theta)^{-1}v_1^N-\gI(\cdot,0)\big)\varphi_M\dS \qquad \forall M=1,\ldots,N. \end{equation*}
	Since $\Omega$ has boundary with $C^{1,1}$ regularity, then, by \cite[Thm.~2.3.3.6, 2.4.2.7]{grisvard1985elliptic}, $z_0^N\in H^2(\Omega)$ and
	\begin{equation*}
		\|z_0^N\|_{H^2(\Omega)}\leq C\|\NormDer v_0^N + (c\theta)^{-1}v_1^N - \gI(\cdot, 0)\|_{H^\frac12(\GI)}\rightarrow0,
	\end{equation*}
	for a constant $C$ independent of $N$. Then define $w_0^N:=v_0^N+z_0^N$, $w_1^N:=v_1^N$; it holds that $w_0^N\rightarrow u_0$ in $H^2(\Omega)$ (since $z_0^N\rightarrow0$ in $H^2(\Omega)$) and that $w_1^N\rightarrow u_1$ in $H^1(\Omega)$.
	Moreover, because $z_0^N$ is solution of the problem described above, it holds that
	\begin{align*}
		\int_\GI \big(\NormDer w_0^N + (c\theta)^{-1}w_1^N-\gI(\cdot,0)\big)\varphi_M\dS
		&=\int_\GI\big(\NormDer v_0^N + \NormDer z_0^N + (c\theta)^{-1}v_1^N-\gI(\cdot,0)\big)\varphi_M\dS\\
		&=\int_\Omega (-\Grad z_0^N\cdot\Grad\varphi_M -z_0^N\varphi_M)\dx + \int_\GI \NormDer z_0^N\varphi_M\dS\\
		&=\int_\Omega(\Lap z_0^N-z_0^N)\varphi_M\dx
		\qquad\qquad\forall M=1,\ldots,N.
	\end{align*}
\end{proof}
\begin{remark}
Theorem~\ref{thm:t_regularity} also shows that, when $\Omega$, $f$, $\gI$, $u_0$ and $u_1$ satisfy the conditions stated in the theorem,
the solution to \eqref{eq:imp_varprob} satisfies the wave equation $\SecondTimeDer u-c^2\Delta u=f$ for almost every $(\vect x, t) \in Q$.
\end{remark}

\section{Abstract framework}
\label{sec:abstract_framework}

Since the derivation of Morawetz identities and their use in the construction of variational formulation are quite involved, we propose a simple abstract framework that can be used to derive coercive formulations arising from multipliers.
This highlights the common structure of the space--time formulation proposed in \S\ref{sec:interior_problem} and of the formulation for the Helmholtz equation in \cite{Moiola2014}.

\begin{remark}[Real and complex problems]\label{rem:ComplexReal}
	In this section, we assume that all the functions and the spaces are complex-valued, unless otherwise specified, thus we use sesquilinear forms and antilinear functionals.
	This allows to include the Helmholtz impedance boundary value problem in Remark~\ref{rem:Helmholtz}.
	In the following sections, which concern the wave equation, all spaces are real, so we use bilinear forms and linear functionals, but all our results immediately extend to the complex-valued case.
\end{remark}

Consider the following abstract boundary value problem:
\begin{equation}
	\label{eq:abstract_problem}
	\begin{cases}
		\operator{L} u = f \qquad \text{on } {D} \\
		\operator{B} u = g \qquad \text{on } \partial {D}
	\end{cases}
\end{equation}
with $D \subset \R^d$,
$f\in L^2(D)$,
$\operator L:V\to L^2(D)$ a differential operator where $V\subset$ $L^2(D)$,
$g\in V_{\partial D}$ where $V_{\partial D}$ is a function space on $\partial D$ and
$\operator{B}:V\to V_{\partial D}$ a trace operator.

Consider an operator $\operator{M}:V\to L^2(D)$, which will play the role of the Morawetz multiplier, and introduce a Hermitian sesquilinear form $m$  defined as:
\begin{align}\label{eq:m_def}
	m(u,v) :=\int_D (\operator{L}u \conj{\operator{M}v} + \operator{M}u\conj{\operator{L}v})\dx,
	\qquad u,v\in V.
\end{align}
For all $u\in V$, it holds that
\begin{align}\label{eq:real_part}
	\Re \int_D \operator{L}u\conj{\operator{M}u} \dx = \frac{1}{2} m(u,u).
\end{align}

We denote by $r$ and $s$ two sesquilinear forms on $V$ such that
\begin{subequations}\label{eq:assum:boundary_identity}
	\begin{align}\label{eq:assum:splitId}
		 & m(u,v) = r(u,v) + s(u,v),                                                  \intertext{and}
		\label{eq:assum:S=Sg}
		 & \text{if }\quad\operator{B} u = g \quad\text{ then }\quad s(u,v) = S_g(v),
	\end{align}
\end{subequations}
for an appropriate antilinear form $S_g$ that depends on the boundary datum $g$.
In the concrete cases below, the sesquilinear forms $r$ and $s$ are obtained through integration by parts and other manipulations of $m(u,v)$.
Let $\ell$ be a least-squares-type sesquilinear form, i.e.\ it is Hermitian, it defines a seminorm over the space $V$, and
\begin{equation}
	\label{eq:assum:ell=L}
	\text{if} \quad \oL u = f \quad \text{and} \quad \operator{B} u = g \quad \text{then} \quad
	\ell(u, v) = \Ll(v) \qquad \forall v \in V,
\end{equation}
where $\Ll$ is an antilinear form that is defined solely using $f$ and $g$.
The sesquilinear form $\ell$ is used to introduce positive-semidefinite terms that help controlling some components of the norm on $V$, which in turn are needed to ensure the continuity of $r$ and $s$.
To satisfy Assumption \eqref{eq:assum:ell=L} one could choose for example $\ell(u,v) = \int_D \oL u \oL v \dx$ and $\Ll(v) = \int_D f \oL v \dx$.

We use $r$ and $\ell$ to define another sesquilinear form $b$ as:
\begin{align}\label{eq:b}
	b(u,v) :=
	\int_D \operator{M}u\conj{\operator{L}v} \dx - r(u,v) + \ell(u, v),
	\qquad u,v\in V.
\end{align}
The sesquilinear form $b$ can be used to solve the abstract problem \eqref{eq:abstract_problem} together with an appropriately defined antilinear form $F$.
Using relation \eqref{eq:assum:splitId} and the definition of $m$ \eqref{eq:m_def}, it holds that
\begin{align*}
	b(u,v) = \int_D -\operator{L}u\conj{\operator{M}v} \dx + s(u,v) + \ell(u, v).
\end{align*}
Moreover, if $u\in V$ is a solution to problem \eqref{eq:abstract_problem}, by assumptions \eqref{eq:assum:S=Sg} and \eqref{eq:assum:ell=L},
\begin{align*}
	b(u,v) = \int_D -f\conj{\operator{M}v} \dx + S_g(v) + \Ll(v).
\end{align*}
This suggests to define the antilinear form $F$ as
\begin{align}\label{eq:F}
	F(v) := \int_D -f\conj{\operator{M}v} \dx+ S_g(v) + \Ll(v),
\end{align}
so that if $u\in V$ is a solution of problem \eqref{eq:abstract_problem} then
\begin{equation}\label{eq:buv=Fv}
	b(u,v) = F(v) \qquad \forall v\in V.
\end{equation}

A sufficient condition for  the well-posedness of \eqref{eq:buv=Fv} is the coercivity of $b$.
Recalling the definitions of $b$ \eqref{eq:b} and $m$ \eqref{eq:m_def}, and the identity \eqref{eq:real_part},
\begin{align}\begin{aligned}
		\Re b(v,v) & = \ell(v,v) + \Re \int_D \operator{M}v \conj{\operator{L}v} \dx - \Re r(v,v)               \\
		           & = \ell(v, v) + \frac{1}{2} \Re r(v,v) + \frac{1}{2} \Re s(v,v) - \Re r(v,v)                \\
		           & = \ell(v, v) - \frac{1}{2}\Re r(v,v) + \frac{1}{2} \Re s(v,v) \qquad\qquad \forall v\in V.
	\end{aligned}\label{eq:abstract_coercivity_condition}\end{align}
Let $\|\cdot\|_V$ be a norm on $V$ such that it is a Hilbert space, and that $\operator L,\operator M,\operator B$ are continuous operators.
Assume that $r$, $s$ and $\ell$ are continuous sesquilinear forms and $S_g$ and $L$ are continuous antilinear functionals.
Then the variational problem \eqref{eq:buv=Fv} is well-posed by Lax--Milgram theorem if $b$ is coercive: to ensure this one needs to provide a lower bound for $2 \ell(v,v) + \Re\{s(v,v)-r(v,v)\}$.
It turns out that this approach allows to choose $r$, $s$ and $\ell$ in \eqref{eq:assum:boundary_identity} for both the Helmholtz equation, as we summarise in Remark~\ref{rem:Helmholtz}, and the wave equation, as we describe in the rest of the paper.

\begin{remark}[Coercive formulation for the Helmholtz equation]\label{rem:Helmholtz} \;
	A reason to consider this abstract framework is that we can recover the coercive formulation of \cite{Moiola2014} for the impedance Helmholtz problem
	$$
		\operator{L}u = \Lap u + k^2 u = f \quad \text{in }\Omega, \qquad
		\operator{B}u = \NormDer{u} -\textrm{i} k u = g \quad \text{on }\partial \Omega
	$$
	where $k$ is a positive constant and $\Omega \subset \R^d$ is a bounded star-shaped domain.
	We choose
	$\operator{M}u = \vect{x} \cdot \Grad u - \textrm{i} k\beta u + \frac{d - 1}{2}u$, where $\beta$ is a positive coefficient, and $\ell(u,v) := \frac1{3k^2}
	\int_D \oL u \oL v \dx$ ($\Ll$ is defined accordingly).
	We define the sesquilinear forms $r$ and $s$ as
\begin{align*}
		r(u,v) = & - \int_{\Omega} \left(\Grad u \cdot \conj{\Grad v} + k^2 u \conj{v}\right) \dx                     \\
		         & + \int_{\partial \Omega}\bigg( \textrm{i} k
		u \conj{\operator{M}v} + \left(\vect{x} \cdot \Grad_{\partial \Omega} u - \textrm{i}k\beta u + \frac{d-1}2 u\right) \conj{\NormDer{v}}
+(\vect{x} \cdot \normal) \left(k^2 u \conj{v} - \Grad_{\partial \Omega} u \cdot \conj{\Grad_{\partial \Omega} v}\right)\bigg)\dS,
		\\
		s(u,v) = & \int_{\partial \Omega} \left(\NormDer{u} - \textrm{i}  k u \right) \conj{\operator{M}v} \dS.
	\end{align*}
	where $\Grad_{\partial \Omega}$ is the tangential component of the gradient on $\GI$.
	Note that the assumptions \eqref{eq:assum:splitId}--\eqref{eq:assum:S=Sg} are satisfied.
	This, together with \eqref{eq:b}--\eqref{eq:F}, defines $b$ and $F$:
	\begin{align*}
		b(u,v) = & \int_{\Omega} \left(\Grad u \cdot \conj{\Grad v} + k^2 u \conj{v} + \left(\operator{M}u + \frac{1}{3k^2} \operator{L}u\right) \conj{\operator{L}v}\right) \dx + \\
		         & -  \int_{\partial \Omega} \left(\textrm{i} ku \conj{\operator{M}v} + \Big(\vect{x} \cdot \Gradbound{u} - \textrm{i}                                       k \beta u + \frac{d-1}{2}u\Big) \conj{\NormDer{v}} + (\vect{x} \cdot \normal) \left(k^2 u\conj{v} - \Gradbound{u} \cdot \conj{\Gradbound{v}}\right) \right)\dS,
		\\
		F(v) =   & \int_{\Omega}\left(\conj{\operator{M}v} - \frac{1}{3k^2}\conj{\operator{L}v}\right) f \dx + \int_{\partial \Omega} \conj{\operator{M}v}g \dS,
	\end{align*}
	which coincide with those defined in \cite[eq.~(1.23), (1.24)]{Moiola2014}.
	All forms involved are continuous in the space
	\begin{equation*}
		V = \overline{C^{\infty}(\overline{\Omega})}^{\| \cdot \|_V}
		=\left\{v\in H^1(\Omega) : \;\Lap v \in L^2(\Omega),\; v \in H^1(\partial \Omega),\;
		\NormDer{v} \in L^2(\partial \Omega) \right\}
	\end{equation*}
	(see \cite[eq.~(1.21) and Lemma~A.1]{Moiola2014}),
	equipped with the norm
	\begin{align*}\|v\|^2_V = \  & k^2\|v\|^2_{L^2(\Omega)} + \|\Grad v\|^2_{L^2(\Omega)} + k^{-2}\|\Lap v\|^2_{L^2(\Omega)} \\&
		+ \mathrm{diam}(\Omega) \left(k^2\|v\|^2_{L^2(\partial \Omega)} + \|\Grad_{\partial \Omega} v\|^2_{L^2(\partial \Omega)} + \left\|\NormDer{v}\right\|^2_{L^2(\partial \Omega)} \right).
	\end{align*}
	The main result of \cite{Moiola2014} is that $b$ (and actually a more general version) is coercive in $V$.
	The proof (\cite[Thm.~3.4]{Moiola2014}) essentially relies on controlling $\Re\{s(v,v)-r(v,v)\}$ from below and requires $\Omega$ to be star-shaped with respect to a ball centered at the origin.
\end{remark}

\section{Space--time Morawetz multiplier and identities}
\label{sec:Identities}

We define the space--time Morawetz multiplier $\operator{M}$ as the operator
\begin{equation}\label{eq:multiplier_def}
\operator{M} u : = - \xi \vect{x} \cdot \Grad u + \beta (t - T^*)\TimeDer{u},
\qquad\text{where } \xi, \beta > 0 \text{ and }T^* = \nu T,\; \nu > 1,
\end{equation}
for either $u\in H^1(Q)$ or $u\in C^1(Q)$.

To simplify the notation, we denote the wave operator by $\operator{W}u: = \SecondTimeDer{u} - c^2 \Lap u$.

\begin{remark}[Other multipliers]\label{remark:energy_estimation}
	The definition of the multipliers is problem-dependent.
	In the context of energy estimation for the wave equation, the multiplier used is  different, and is defined as $\operator{Z}u = t\vect{x} \cdot \Grad u + \frac{1}{2}(|\vect{x}|^2 + t^2)\partial_tu + t u$ (see \cite[App.~3]{lax1990scattering} or \cite{Morris2018}). One would then compute $q$ and $\vect{p}$ such that $\operator{Z}u \operator{W}u = \TimeDer{q} + \Div \vect{p}$,  integrate $\TimeDer{q} + \Div \vect{p}$ over the space--time domain, and use the divergence theorem to obtain an estimate of the energy that decays  as $\frac{1}{t}$.
	For the Helmholtz equation the ``Rellich multiplier'' $\operator Mu=\vect x\cdot\Grad u$ is instrumental to prove wavenumber-explicit stability bounds for impedance boundary value problems, \cite[Prop.~8.1.4]{melenk1995generalized}.
	The coercive formulation for the Helmholtz equation in \cite{Moiola2014} relies on the ``Morawetz multiplier'' $\operator{M}v = \vect{x} \cdot \Grad v - \textrm{i}k\beta v + \frac{d-1}{2}v$.
	See \cite[sect.~1.4]{Moiola2014} for a brief summary of the uses of this kind of multipliers for wave problems.
\end{remark}

The Morawetz identity allows to write the sum $\operator{M}u \operator{W}v +  \operator{W}u\operator{M}v$ as the sum of a time derivative, a divergence, and two terms ``with a sign'', i.e.\ whose signs are determined by the parameters $\beta$ and $\xi$ when $v=u$, for all $u\ne0$.
\begin{lemma}[Pointwise Morawetz identity]
	For all $u,v\in C^2(Q)$ the following identity holds:
	\begin{equation}
		\label{eq:diff_identity}
		\begin{split}
			\operator{M}u \operator{W}v + \operator{W}u\operator{M}v
=&\TimeDer{} \Big[
				\operator{M}u \, \TimeDer{v} +\TimeDer{u} \,\operator{M}v
				-\beta (t-T^*)\left(\TimeDer{u} \TimeDer{v} - c^2\Grad u \cdot \Grad v\right)
				\Big] \\
&-\Div \Big[
			c^2{\operator{M}u\, \Grad v} + c^2\Grad u\,\operator{M}v
			- \xi \vect{x} \left( \TimeDer{u}\TimeDer{v} - c^2\Grad u \cdot \Grad v \right)
			\Big] \\
& - \big(\beta + \xi d \big) \TimeDer{u} \TimeDer{v}
-c^2 \big(
			\beta + \xi (2 - d)
			\big) \Grad u \cdot \Grad v.
		\end{split}
	\end{equation}
\end{lemma}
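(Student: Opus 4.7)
The plan is to verify \eqref{eq:diff_identity} by direct computation, decomposing the Morawetz multiplier as $\operator M u = M_1 u + M_2 u$ with the Rellich-type spatial part $M_1 u := -\xi\vect x\cdot\Grad u$ and the time-dilation part $M_2 u := \beta(t-T^*)\TimeDer u$. Then $\operator M u\,\operator W v + \operator W u\,\operator M v$ splits into four pieces according to the pairing of $\{M_1,M_2\}$ with the time part $\SecondTimeDer{}$ and the space part $-c^2\Lap$ of $\operator W$, and each piece is a pure product-rule computation; the identity then emerges by collecting all resulting time derivatives, divergences and pointwise residuals.

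The key identities I would use are the following. For $M_1$ paired with the second time derivative,
\begin{equation*}
\vect x\cdot\Grad u\,\SecondTimeDer v + \SecondTimeDer u\,\vect x\cdot\Grad v = \TimeDer\bigl(\vect x\cdot\Grad u\,\TimeDer v + \TimeDer u\,\vect x\cdot\Grad v\bigr) - \bigl(\TimeDer u\,\vect x\cdot\Grad\TimeDer v + \vect x\cdot\Grad\TimeDer u\,\TimeDer v\bigr),
\end{equation*}
and the last bracket is rewritten via $\Div(\vect x\,\TimeDer u\,\TimeDer v) = d\,\TimeDer u\,\TimeDer v + \TimeDer u\,\vect x\cdot\Grad\TimeDer v + \vect x\cdot\Grad\TimeDer u\,\TimeDer v$, producing a divergence plus the pointwise residual $-\xi d\,\TimeDer u\,\TimeDer v$. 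For $M_1$ paired with the Laplacian I invoke the polarised Rellich identity
\begin{equation*}
\vect x\cdot\Grad u\,\Lap v + \Lap u\,\vect x\cdot\Grad v = \Div\bigl(\vect x\cdot\Grad u\,\Grad v + \Grad u\,\vect x\cdot\Grad v - \vect x\,\Grad u\cdot\Grad v\bigr) + (d-2)\,\Grad u\cdot\Grad v,
\end{equation*}
which itself follows from expanding the three divergences on the right by the Leibniz rule and cancelling the mixed second-derivative terms using $\partial_{ij}=\partial_{ji}$; multiplying by $\xi c^2$ then contributes the pointwise residual $-c^2\xi(2-d)\,\Grad u\cdot\Grad v$. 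The $M_2$ contributions are simpler: $\TimeDer u\,\SecondTimeDer v + \SecondTimeDer u\,\TimeDer v = \TimeDer(\TimeDer u\,\TimeDer v)$ and $\TimeDer u\,\Lap v + \Lap u\,\TimeDer v = \Div(\TimeDer u\,\Grad v + \TimeDer v\,\Grad u) - \TimeDer(\Grad u\cdot\Grad v)$, which after multiplication by $\beta(t-T^*)$ and $-c^2\beta(t-T^*)$ respectively yield additional time derivatives, a divergence, and the pointwise residuals $-\beta\,\TimeDer u\,\TimeDer v$ and $-\beta c^2\,\Grad u\cdot\Grad v$.

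Finally I would collect the four contributions and recognise the target form: the four time-derivative pieces assemble, after using $\operator M u = M_1 u + M_2 u$, into $\TimeDer\bigl[\operator M u\,\TimeDer v + \TimeDer u\,\operator M v - \beta(t-T^*)(\TimeDer u\,\TimeDer v - c^2\,\Grad u\cdot\Grad v)\bigr]$; the four divergence pieces similarly assemble into the second bracket of \eqref{eq:diff_identity} by the same absorption $M_1 + M_2 \mapsto \operator M$; and the pointwise residuals add to exactly $-(\beta+\xi d)\,\TimeDer u\,\TimeDer v - c^2(\beta + \xi(2-d))\,\Grad u\cdot\Grad v$. The main obstacle will be bookkeeping rather than analysis: keeping consistent signs (in particular, tracking how the minus sign in $M_1 u$ flips $(d-2)$ into $(2-d)$ in the gradient coefficient) and verifying that the $M_1$- and $M_2$-pieces really recombine into the symmetric expressions $\operator M u\,\TimeDer v + \TimeDer u\,\operator M v$ and $c^2\operator M u\,\Grad v + c^2\Grad u\,\operator M v$ inside the time derivative and the divergence. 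Since $u,v\in C^2(Q)$, no regularity or commutation issues arise and every step above is classical.
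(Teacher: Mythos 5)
Your proposal is correct and takes essentially the same route as the paper's proof: a direct product-rule verification that treats the $\SecondTimeDer{}$ and $-c^2\Delta$ parts of $\oW$ separately, with your polarised Rellich identity being precisely the computation the paper carries out via $\Grad(\vx\cdot\Grad u)\cdot\Grad v+\Grad u\cdot\Grad(\vx\cdot\Grad v)=(2-d)\Grad u\cdot\Grad v+\Div{[\vx(\Grad u\cdot\Grad v)]}$. The only organisational difference is that you additionally split $\oM$ into its spatial and temporal parts, giving four pieces instead of the paper's two; all signs and residuals in your bookkeeping check out.
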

\begin{proof}

The proof of \eqref{eq:diff_identity} only requires the use of some standard vector calculus identities.
Indeed, the definition \eqref{eq:multiplier_def} of $\oM$ and repeated applications of the Leibniz product rule, together with $\Div\vx=d$, give the time component of the identity:
\begin{equation}
		\label{eq:time_component}\begin{split}
			(\operator{M}u) \SecondTimeDer{v}
			+ (\operator{M}v) \SecondTimeDer{u}
			&=
			\TimeDer{} \big[
				(\operator{M} u)\TimeDer{v} +
				\TimeDer{u}(\operator{M} v)
				\big]
			- (\TimeDer\oM u) \TimeDer v
			- \TimeDer u (\TimeDer \oM v) \\ & =
			\TimeDer{} \big[
				(\operator{M} u)\TimeDer{v} + \TimeDer{u}(\operator{M} v)
				\big]
			+ \xi (\vect{x} \cdot \Grad \TimeDer{u})  \TimeDer{v}
			+ \xi \TimeDer{u} (\vect{x} \cdot \Grad \TimeDer{v}) \\ &
			\quad  - 2 \beta \TimeDer{u} \TimeDer{v}
			- \beta (t-T^*)\SecondTimeDer{u} \TimeDer{v}
			- \beta (t-T^*)\TimeDer{u} \SecondTimeDer{v}
			\\ & =
			\TimeDer{} \big[
				(\operator{M} u)\TimeDer{v} + \TimeDer{u}(\operator{M} v)
				\big]
			+ \xi \big(
			\Div [\vx  \TimeDer u \TimeDer v]
			- (\Div \vx)\TimeDer u \TimeDer v
			\big) \\ & \quad
			-\beta\TimeDer{u} \TimeDer{v}
			-\beta \TimeDer \big[
				(t-T^*) \TimeDer u \TimeDer v
				\big]
			\\ & =
			\TimeDer{} \big[
				(\operator{M} u)\TimeDer{v} + \TimeDer{u}(\operator{M} v) - \beta (t-T^*) \TimeDer u \TimeDer v
				\big] \\ & \quad
			+ \Div [\xi \vx  \TimeDer u \TimeDer v]
			- (\beta+\xi d)\TimeDer u \TimeDer v.
		\end{split}
	\end{equation}
The space component of the identity is treated similarly.
	Recalling that the gradient of the scalar product of two gradients is
	$\Grad(\Grad u\cdot\Grad v)=(D^2u)\Grad v+(D^2v) \Grad u$ where $D^2$ denotes the (symmetric) Hessian matrix,
	and that $\vx$ is the gradient of $\frac12|\vx|^2$ whose Hessian is the identity matrix, we have
	\begin{align*}
		\Grad(\vx\cdot\Grad u)\cdot\Grad v+\Grad u\cdot\Grad(\vx\cdot\Grad v)
		= & \ 2\Grad u\cdot\Grad v + \big((D^2u) \vx\big)\cdot\Grad v+\Grad u\cdot\big((D^2 v) \vx\big) \\
		= & \ 2\Grad u\cdot\Grad v + \vx\cdot \Grad(\Grad u\cdot\Grad v)                                \\
		= & \ (2-d)\Grad u\cdot\Grad v + \Div\big[\vx (\Grad u\cdot\Grad v)\big].
	\end{align*}
	Then
	\begin{align}
		\nonumber
		\operator{M}u \Lap v + \Lap u \operator{M}v
		=\  & \Div \big[\operator{M}u \Grad v + \Grad u\operator{M}v \big] - \Grad \oM u \cdot \Grad v- \Grad u \cdot \Grad\oM v     \\
		\nonumber
		=\  & \Div \big[\operator{M}u \Grad v + \Grad u\operator{M}v \big]                                                           \\
		\nonumber
		    & +\xi\big(\Grad(\vx\cdot\Grad u)\cdot\Grad v+\Grad u\cdot\Grad(\vx\cdot\Grad v)\big)                                    \\ &
		\label{eq:space_component}
		- \beta(t-T^*)\big(\Grad u \cdot \Grad \TimeDer{v} + \Grad v \cdot \Grad \TimeDer{u}\big)
		\\
		\nonumber
		=\  & \Div \left[\operator{M}u \Grad v + \operator{M}v \Grad u + \xi \vect{x} \Grad u \cdot \Grad v\right]                   \\
		\nonumber
		    & - \TimeDer{} \left[\beta (t-T^*) \Grad u \cdot \Grad v\right] + \bigl(\beta + \xi (2 - d)\bigr) \Grad u \cdot \Grad v.
	\end{align}
The identity \eqref{eq:diff_identity} is obtained multiplying \eqref{eq:space_component} by $c^2$ and subtracting it from \eqref{eq:time_component}.
\end{proof}

Integrating by parts the identity \eqref{eq:diff_identity}, we write the bilinear form $m(\cdot,\cdot)$ defined in \eqref{eq:m_def} (with $\oL=\oW$) as the sum of an integral over $Q$ of first-order derivatives, and an integral on the boundary $\partial Q$.

\begin{lemma}[Integrated Morawetz identity]\label{lem:integrated_identity}
If $u,v\in H^2(Q)$, then
\begin{equation}
		\label{eq:integrated_identity}
		\begin{split}
			m(u,v) :=& \int_Q \big(\operator{M}u \operator{W}v  + \operator{W}u\operator{M}v \big) \dx \dt \\
			=&- \int_Q \big[(\beta + \xi d)\TimeDer{u}\TimeDer{v} + c^2 (\beta + 2 \xi - d\xi)\Grad u \cdot \Grad v \big] \dx \dt \\
&- \int_{\Omega_T} \big[\xi \vect{x} \cdot ( \TimeDer{u} \Grad v + \Grad u \TimeDer{v})
- \beta (T-T^*)(\TimeDer{u} \TimeDer{v} + c^2 \Grad u \cdot \Grad v)\big] \dx  \\
&+ \int_{\Omega_0} \big[\xi \vect{x} \cdot ( \TimeDer{u}\Grad v + \Grad u \TimeDer{v} )
+\beta T^* (\TimeDer{u} \TimeDer{v} + c^2 \Grad u \cdot \Grad v ) \big] \dx\\
& -\int_\Sigma \big[c^2 \operator{M} u\NormDer{v} + c^2\NormDer{u} \operator{M} v
				+ \xi \vect{x} \cdot \vect{n} (- \TimeDer{u} \TimeDer{v}
				+ c^2 \,\Grad u \cdot \Grad v) \big] \dS\dt.
		\end{split}
	\end{equation}
\end{lemma}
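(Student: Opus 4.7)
The plan is to derive the integrated identity by integrating the pointwise identity \eqref{eq:diff_identity} from the preceding lemma over the space--time cylinder $Q$, so that the $\partial_t$ term becomes the difference of traces on $\Omega_T$ and $\Omega_0$ via the fundamental theorem of calculus, and the $\Div$ term becomes a flux integral on $\Sigma$ via the divergence theorem. The two sign-definite bulk terms $-(\beta+\xi d)\TimeDer u \TimeDer v$ and $-c^2(\beta+\xi(2-d))\Grad u\cdot\Grad v$ appearing in \eqref{eq:diff_identity} are already volume integrands and pass directly to the right-hand side of \eqref{eq:integrated_identity}.

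Concretely, I first assume $u,v\in C^2(\overline Q)$ so that the pointwise identity and the classical divergence theorem apply without any regularity concerns. Evaluating the time-potential
$A=\operator M u\,\TimeDer v+\TimeDer u\,\operator M v-\beta(t-T^*)(\TimeDer u\TimeDer v-c^2\Grad u\cdot\Grad v)$
at $t=T$ and using $\operator M u(\cdot,T)=-\xi\vx\cdot\Grad u+\beta(T-T^*)\TimeDer u$, a short algebraic simplification yields
$A|_{t=T}=-\xi\vx\cdot(\TimeDer u\Grad v+\Grad u\TimeDer v)+\beta(T-T^*)(\TimeDer u\TimeDer v+c^2\Grad u\cdot\Grad v)$,
which matches (with an overall minus coming from $A|_T$ entering with sign $+1$ but then being rewritten as $-\int_{\Omega_T}$ of its opposite) the $\Omega_T$-boundary contribution in \eqref{eq:integrated_identity}. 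The analogous computation at $t=0$, using $\operator M u(\cdot,0)=-\xi\vx\cdot\Grad u-\beta T^*\TimeDer u$, produces the $\Omega_0$-boundary contribution. For the spatial flux, $\Div$ integrated against the outward normal $\vn$ gives $\vect x\cdot\vn$ in the $\xi$-term and $\NormDer{}$ in the terms multiplied by $c^2$, reproducing the $\Sigma$-integral in \eqref{eq:integrated_identity}.

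Finally, to extend from $C^2(\overline Q)$ to $H^2(Q)$, I invoke the density of $C^\infty(\overline Q)$ in $H^2(Q)$ (standard for Lipschitz domains) and pass to the limit. All integrands in \eqref{eq:integrated_identity} are products of traces of $u,v$ and their first derivatives on $\Sigma$, $\Omega_0$, $\Omega_T$, together with volume products of first-order derivatives; each such map is continuous from $H^2(Q)$ into the relevant $L^2$ space (thanks to the trace theorem $H^2(Q)\hookrightarrow H^{3/2}(\partial Q)\hookrightarrow H^1(\partial Q)$ for the boundary terms involving $\GradGI$-type contributions and to $H^2(Q)\hookrightarrow H^1(Q)$ for volume terms). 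Multiplication by the bounded coefficients $\xi\vx$, $\beta(t-T^*)$, $c^2$ preserves continuity, so both sides of the claimed identity depend continuously on $(u,v)\in H^2(Q)^2$, and the equality extends from the dense smooth subspace to the whole of $H^2(Q)$.

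The bulk of the work is computational rather than conceptual: the main obstacle is carrying out the substitution of $\operator M u$ at $t=0$ and $t=T$ carefully enough that the coefficients of $\TimeDer u\TimeDer v$, $\Grad u\cdot\Grad v$, and the cross terms $\vx\cdot(\TimeDer u\Grad v+\Grad u\TimeDer v)$ combine to exactly the form displayed in \eqref{eq:integrated_identity}, since several contributions coming from expanding $A|_{t=0,T}$ partially cancel with the $\beta(t-T^*)$ piece already present in $A$. Apart from this bookkeeping, the argument is a direct application of the pointwise identity followed by integration and a routine density extension.
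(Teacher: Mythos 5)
Your proposal is correct and follows essentially the same route as the paper: integrate the pointwise identity \eqref{eq:diff_identity} over $Q$, convert the $\TimeDer[\bullet]$ term into the $\Omega_T$ and $\Omega_0$ contributions by the fundamental theorem of calculus and the $\Div[\bullet]$ term into the $\Sigma$ flux by the divergence theorem, and then extend from smooth functions to $H^2(Q)$ by density and the continuity of both sides as bilinear forms (via the trace theorem). Your explicit evaluations of the time-potential at $t=0$ and $t=T$ are correct and simply spell out the bookkeeping the paper leaves implicit.
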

\begin{proof}
	If $u,v\in C^2(\conj Q)$,
then \eqref{eq:integrated_identity} is immediately obtained by integrating the identity \eqref{eq:diff_identity} over $Q$, applying the divergence theorem \cite[Thm.~3.34]{McLean00} in space to the term $\Div[\bullet]$ and the fundamental theorem of calculus to the term $\TimeDer[\bullet]$.

	The assertion follows from the density of $C^2(\conj Q)$ in $H^2(Q)$, e.g.\ \cite[Thm.~3.29]{McLean00}, and the continuity in $H^2(Q)$ in the sense of bilinear forms (using the trace theorem, e.g.\ \cite[Thm.~3.37]{McLean00}) of both the left- and the right-hand side of \eqref{eq:integrated_identity}.
\end{proof}

\section{Interior impedance problem}
\label{sec:interior_problem}

We derive and analyse a space--time variational formulation in the form \eqref{eq:buv=Fv} for the interior impedance problem, i.e.\ for~\eqref{eq:IBVP} with $\GD=\emptyset$.

\subsection{Bilinear and linear form definition}\label{sec:forms}
We follow the abstract approach described in section~\ref{sec:abstract_framework}, taking $\oL=\oW$, $D=Q$ and $\operator M$ as in \eqref{eq:multiplier_def}.
Recalling Remark~\ref{rem:ComplexReal}, from now on all functions are assumed to be real.
The bilinear form $m(\cdot,\cdot)$ in \eqref{eq:m_def} has been computed in \eqref{eq:integrated_identity}.
In order to define $b(\cdot,\cdot)$ and $F(\cdot)$, it is necessary to choose $r(\cdot,\cdot)$, $s(\cdot,\cdot)$ and $\ell(\cdot, \cdot)$ first:
\begin{align}	\label{eq:r_def}
	 & \begin{aligned}
		   r(u,v) := - & \int_Q \left[
			   (\beta + \xi d)\TimeDer{u}\TimeDer{v} + c^2  (\beta + 2 \xi - d\xi)\Grad u \cdot \Grad v
		   \right] \dx \dt                                                                                              \\
+           & \int_{\Omega_T} \big[-\xi \vect x \cdot (  \TimeDer{u} \,\Grad v+ \Grad u\, \TimeDer{v} )
+\beta (T-T^*)(\TimeDer{u} \TimeDer{v} + c^2 \Grad u \cdot \Grad v)\big] \dx                              \\
-           & \int_{\SI} \big[c^2 \operator{M} u\,\NormDer{v} -\frac{c}{\theta} \TimeDer{u}\, \operator{M} v
		   + \xi \,\vect x \cdot \vect{n}\,(-\TimeDer{u} \TimeDer{v}+  c^2\,\Grad u \cdot \Grad v)\big] \dS\dt,   \\
	   \end{aligned}        \\
\label{eq:s_def}
	 & \begin{aligned}
		   s(u,v) := & \int_{\Omega_0} \big[ \xi \vect x \cdot \big( \TimeDer{u} \,\Grad v+ \Grad u \,\TimeDer{v}\big)
		   +\beta T^* \left(\TimeDer{u} \TimeDer{v} + c^2 \Grad u \cdot \Grad v\right)\big] \dx                        \\
& -  \int_{\SI} c^2 \left[(\theta c)^{-1} \TimeDer{u} + \NormDer{u}\right]\operator{M} v  \dS \dt,
	   \end{aligned} \\
	\label{eq:ell_def}
	 & \begin{aligned}
		   \ell(u,v) := \; & A_QT^2\int_Q \oW u \oW v \dx \dt + A_\OZ T^{-1}\int_{\OZ} u v \dx,
	   \end{aligned}
\end{align}
where $A_Q, A_\OZ > 0$.
The first two bilinear forms satisfy assumptions \eqref{eq:assum:splitId}--\eqref{eq:assum:S=Sg}: they sum to $m(\cdot,\cdot)$, and $s(u,v)$ depends on $u$ only through the IBVP \eqref{eq:IBVP} boundary data, i.e.\ $u$ and $\TimeDer u$ on $\OZ$ and $(\theta c)^{-1} \TimeDer{u} + \NormDer{u}$ on ${\SI}$.
Moreover, $\ell$ is positive-semidefinite, satisfies assumption \eqref{eq:assum:ell=L} and depends on $u$ only through the source term and the boundary data, i.e.\ $\oW u$ on $Q$ and $u$ on $\OZ$, with $F_\ell(v):= A_QT^2\int_Q f \oW v \dx \dt + A_\OZ T^{-1}\int_{\OZ} u_0 v \dx$.

The bilinear and linear forms of the variational formulation follow from \eqref{eq:b}--\eqref{eq:F} and \eqref{eq:r_def}--\eqref{eq:ell_def}:
\begin{align}\label{eq:def:bilin}
	 & \begin{aligned}
		   b(u,v) =
		    & \int_Q \big(
		   \operator{M} u \operator{W} v
		   +(\beta + \xi d)\TimeDer{u}\TimeDer{v} + c^2  (\beta + 2\xi - d\xi)\Grad u \cdot \Grad v
		   \big) \dx \dt                                                                                              \\
		    & + \int_{\Omega_T}\big[ \xi \vect{x} \cdot (\TimeDer{u}\,\Grad v + \Grad u\, \TimeDer{v})
- \beta (T-T^*)(\TimeDer{u} \TimeDer{v}	+ c^2 \Grad u \cdot \Grad v)\big] \dx                           \\
& + \int_{\SI} c^2 \operator{M} u\,\NormDer{v} - c \theta^{-1} \TimeDer{u} \operator{M} v
		   + \xi \vect{x} \cdot \vect{n} ( - \TimeDer{u} \TimeDer{v} + c^2 \Grad u \cdot \Grad v )\big] \dS \dt \\
		    & + A_Q T^2 \int_Q \operator{W} u \operator{W} v \dx \dt + A_\OZ T^{-1}\int_\OZ u v \dx,
	   \end{aligned}
	\\ \label{eq:def:lin}
	 & \begin{aligned}
		   F(v) =
		    & \int_Q -f\operator{M} v \dx\dt
		   - \int_{\SI} c^2 \,\gI\,\operator{M} v \dS \dt                                                  \\
		    & + \int_{\Omega_0}\big[\xi \vect{x} \cdot \big( u_1\,\Grad v+ \Grad u_0\, \TimeDer{v}\big)
		   +\beta T^* \left(u_1 \TimeDer{v}+ c^2 \Grad u_0\cdot \Grad v\right)\big]\dx                  \\
		    & +A_Q T^2 \int_Q  f \oW v \dx \dt + A_\OZ T^{-1}\int_\OZ u_0 v \dx.
	   \end{aligned}
\end{align}

\subsection{Norm, function spaces and variational problem}\label{sec:spaces}

In order to write a variational problem and to prove a coercivity result, a function space and an accompanying norm are needed.

We choose a norm that is dimensionally homogeneous and has the minimum number of terms to ensure continuity of $b$ and $F$:
for $v \in C^{\infty}(\overline{Q})$,
\begin{equation}
	\label{eq:norm_def}
	\begin{split}
		\|v\|_V^2 :=& \qquad\; \|\TimeDer{v}\|_{Q}^2 + \quad c^2\|\Grad v\|_Q^2 + T^2 \|\operator{W} v\|_Q^2 \\
		& + T \|\TimeDer{v}\|_{\Omega_T}^2 + c^2 T \|\Grad v\|_{\Omega_T}^2 \\
		& + T \|\TimeDer{v}\|_{\Omega_0}^2 \:+ c^2 T \|\Grad v\|_{\Omega_0}^2 + T^{-1}\|v\|_{\OZ}^2 \\
		& + \LI \|\TimeDer{v}\|_{\SI}^2 + c^2 \LI \|\Grad v\|_{\SI}^2 .
	\end{split}
\end{equation}
The boundary terms appearing in \eqref{eq:norm_def} are the full $L^2$ norms of $\TimeDer v$ and $c\Grad v$ on the whole space--time boundary $\partial Q$, weighted with the domain sizes $\LI$ and $T$.
In particular, the $\|\Grad v\|_{\SI}$ term involves both the tangential and the normal components of the space gradient of $v$.

Note that $\|\cdot\|_V$ does not explicitly contain the $L^2(Q)$ norm of the function $v$, but only the $L^2(\OZ)$ norm.
However, when combined with the time derivative over $Q$, this is sufficient to give an upper bound on the $L^2(Q)$ norm:
\begin{align}\nonumber
	\|v\|_Q^2=
	\int_Q\Big(v(\vx,0)+\int_0^t \TimeDer v(\vx,s) \mathrm ds\Big)^2\dx\dt
& \leq 2 \int_Q v^2(\vx, 0) \dx \dt +  2 \int_Q \left( \int_0^t \TimeDer v (\vx,s)\mathrm{d}s \right)^2 \dx \dt \\
    & \leq 2T \|v\|_\OZ^2 + T^2\|\TimeDer v\|_Q^2
	\;\leq\; 2 T^2 \|v\|_V^2.
	\label{eq:L2V}
\end{align}

The space $V$ of test and trial functions is defined as the closure in the $\|\cdot\|_V$ norm of the smooth function space:
\begin{equation}\label{eq:space_def}
	V := \overline{C^\infty(\overline Q)}^{\|\cdot\|_V}.
\end{equation}

Following \S\ref{sec:abstract_framework}, we pose the variational problem:
\begin{equation}   \label{eq:vpV}
	\text{Find $u \in V$ such that}\qquad     b(u,v) = F(v) \qquad \forall v \in V,
\end{equation}
where $b(\cdot,\cdot)$ and $F(\cdot)$ are as in \eqref{eq:def:bilin} and \eqref{eq:def:lin}.

\begin{proposition}[Consistency]\label{prop:EquivLadyzVP}
	If $u \in V$ is a generalised solution of the interior impedance problem in the sense of Definition~\ref{def:imp_gsol}, then it is also a solution of \eqref{eq:vpV}.
\end{proposition}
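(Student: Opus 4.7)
My plan is to follow the abstract framework of Section~\ref{sec:abstract_framework}: verify the splitting identity \eqref{eq:assum:splitId} for the sesquilinear forms \eqref{eq:r_def}--\eqref{eq:s_def} on the whole space $V$, verify the hypotheses \eqref{eq:assum:S=Sg} and \eqref{eq:assum:ell=L} using the generalised solution property, and then combine these through the definition \eqref{eq:b} of $b$ to obtain $b(u,v)=F(v)$.

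First, I would establish the Morawetz splitting $m(u,v)=r(u,v)+s(u,v)$ on $V\times V$. By direct comparison of \eqref{eq:r_def}--\eqref{eq:s_def} with the right-hand side of \eqref{eq:integrated_identity}, the sum $r+s$ reproduces exactly the bulk, $\Omega_0$, $\Omega_T$ and $\Sigma$ terms of Lemma~\ref{lem:integrated_identity} (the impedance contribution $(c/\theta)\TimeDer u\,\oM v$ that appears with opposite sign in $r$ and $s$ on $\SI$ cancels, leaving the expected $\NormDer u\,\oM v$ trace). Hence the identity is valid for $u,v\in C^2(\overline Q)$. For a general $u\in V$, I would use the density \eqref{eq:space_def} to pick $u_n\in C^\infty(\overline Q)$ with $u_n\to u$ in $\|\cdot\|_V$, and pass to the limit term by term: every factor appearing in $m,r,s$ is a first-order derivative in $Q$, a trace of a first-order derivative on $\partial Q$, or the operator $\oW u$ acting in $L^2(Q)$, and all of these are controlled by the $V$-norm (this is part of what Proposition~\ref{prop:continuity_b} will establish).

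Second, I would extract from Definition~\ref{def:imp_gsol} the pointwise identities needed to match $s(u,v)=S_g(v)$ and $\ell(u,v)=F_\ell(v)$, where $S_g$ and $F_\ell$ are the data-dependent antilinear functionals read off from \eqref{eq:def:lin}. Approximating $u$ again by $u_n\in C^\infty(\overline Q)$ converging in $V$, I would integrate by parts in both time and space in the left-hand side of \eqref{eq:imp_varprob} written for $u_n$, recombining $\SecondTimeDer u_n-c^2\Lap u_n$ into $\oW u_n$ so that the $\SecondTimeDer$ and $\Lap$ terms need not be controlled separately. Then letting $n\to\infty$ and using $\oW u_n\to\oW u$ in $L^2(Q)$, $\TimeDer u_n|_{\OZ}\to\TimeDer u|_{\OZ}$ in $L^2(\OZ)$ and $\NormDer u_n|_{\SI}\to\NormDer u|_{\SI}$ in $L^2(\SI)$ (all of these are controlled by the boundary terms built into the $V$-norm) yields
\begin{equation*}
\int_Q(\oW u-f)\,v\dx\dt+\int_{\OZ}(\TimeDer u-u_1)\,v\dx+c^2\int_\SI\bigl[\NormDer u+(c\theta)^{-1}\TimeDer u-\gI\bigr]v\dS\dt=0
\end{equation*}
for every $v\in\widehat H^1(Q)$. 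Localising $v$ to have compact support in $Q$, support touching only $\OZ$, and support touching only $\SI$, one deduces $\oW u=f$ a.e.\ in $Q$, $\TimeDer u=u_1$ a.e.\ on $\OZ$, and $\NormDer u+(c\theta)^{-1}\TimeDer u=\gI$ a.e.\ on $\SI$. Combined with $u|_{\OZ}=u_0$ from Definition~\ref{def:imp_gsol}, substituting these four identities into \eqref{eq:s_def} and \eqref{eq:ell_def} yields precisely $s(u,v)=S_g(v)$ and $\ell(u,v)=F_\ell(v)$.

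Finally, the conclusion is an algebraic rearrangement:
\begin{align*}
b(u,v)&=\int_Q\oM u\,\oW v\dx\dt-r(u,v)+\ell(u,v)
=\int_Q\oM u\,\oW v\dx\dt-m(u,v)+s(u,v)+\ell(u,v)\\
&=-\int_Q\oW u\,\oM v\dx\dt+s(u,v)+\ell(u,v)
=-\int_Q f\,\oM v\dx\dt+S_g(v)+F_\ell(v)=F(v).
\end{align*}
The main obstacle is the integration by parts used in the second step: since $u\in V$ need not satisfy $\SecondTimeDer u,\Lap u\in L^2(Q)$ individually (only their combination $\oW u$ does), the classical Green's identity cannot be applied directly to $u$, and the calculation must be performed on the smooth approximations $u_n$ with careful tracking of every boundary trace so that the limit identity is exact. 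Once these pointwise PDE and boundary equalities are in hand, the rest is a mechanical application of the abstract framework.
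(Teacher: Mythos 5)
Your proposal is correct and follows essentially the same route as the paper's proof: extend the integrated Morawetz identity from $H^2(Q)$ to $V$ by density and $\|\cdot\|_V$-continuity, then use the generalised-solution property together with the splitting $m=r+s$ and the data-dependence of $s$ and $\ell$ to conclude $b(u,v)=F(v)$. The only difference is that you make explicit the recovery of the strong identities $\oW u=f$, $\TimeDer u=u_1$ on $\OZ$ and the impedance condition on $\SI$ from \eqref{eq:imp_varprob}, a step the paper compresses into the phrase ``the manipulations in \S\ref{sec:forms}''.
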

\begin{proof}
	The operators $\operator{M,L}:V\to L^2(Q)$ and the bilinear form at the right-hand side of the integrated identity \eqref{eq:integrated_identity} are continuous in the norm $\|\cdot\|_V$. Thus, the proof of Lemma~\ref{lem:integrated_identity} shows that all
	$u,v\in V$ satisfy \eqref{eq:integrated_identity}.
	If $u\in V$ is a generalised solution in the sense of Definition~\ref{def:imp_gsol}, the manipulations in \S\ref{sec:forms} lead to the relation $b(u,v)=F(v)$ for all $v\in V$.
\end{proof}

\begin{remark}[The space $W$]\label{remark:V=W?}
We introduce a Hilbert space $W$ that is (possibly) larger and defined more explicitly than the space $V$ in \eqref{eq:space_def}, which instead is defined as the completion of a space of smooth functions.
In this remark we temporarily write explicitly all trace operators.
First let
	\begin{align*}
		H(Q; \oW) :=\; & \left\{u \in H^1(Q): \oW u \in L^2(Q)\right\}.
\end{align*}
For all $v \in H(Q;\oW)$ the Dirichlet trace $\gamma_Q v$ is well-defined since $\gamma_Q:H^1(Q)\to H^{\frac12}(\partial Q)$, \cite[Thm.~3.37]{McLean00}; in particular  $(\gamma_Q v)|_D \in L^2(D)$ for $D \in \{\OZ, \OT, \SI\}$.
	We show that also the Neumann trace can be defined on $H(Q;\oW)$ using integration by parts.
	Indeed, for $v \in H(Q;\oW)$, define $D_n u\in H^{-\frac12}(\partial Q)=(H^{\frac12}(\partial Q))^*$ by
\begin{equation*}
		\langle D_n u , \varphi \rangle_{H^{-\frac12}\times H^{\frac12}} :=
		\int_Q (\TimeDer v \TimeDer \phi - c^2 \Grad v \cdot \Grad \phi) \dx \dt - \int_Q \oW v \phi \dx \dt \qquad \forall \varphi \in H^{\frac12}(\partial Q),
	\end{equation*}
	where $\phi$ is any element of $\phi \in H^1(Q)$ such that $\varphi = \gamma_Q \phi$.
	Exactly as in \cite[Lemma~4.3]{McLean00}, one can prove that $D_n u$ is the Neumann trace of $v$ and is well-defined on $\partial Q$, in particular its action on $\varphi$ is independent of the choice of the lifting $\phi$.
	To define the desired space, we require further regularity on both traces:
$$
		W:=\Big\{v\in H^1(Q;\oW):\; (\gamma_Qv)|_\OZ\in H^1(\OZ),\; (\gamma_Qv)|_\OT\in H^1(\OT),
		\; (\gamma_Qv)|_\SI\in H^1(\SI),
		\; D_n v\in L^2(\partial Q)\Big\}.
	$$
Since $L^2(\partial Q)$ functions can be split over the different parts of $\partial Q$, differently from $H^{-\frac12}(\partial Q)$ distributions, and since $(\gamma_Qv)|_\SI\in H^1(\SI)$ is equivalent to the $L^2(\SI)$ regularity of $\TimeDer v$ and of the tangential component of $\nabla v$,
	we can characterise $W$ (neglecting the trace notation) as
	\begin{align*}
		W & =\Big\{v\in H^1(Q):\;\oW v\in L^2(Q),\; \Grad v,\TimeDer v\in L^2(\partial Q)\Big\} \\
		  & =\Big\{v \in H^1(Q): \|v\|_V < \infty \Big\}.
	\end{align*}
Then, the spaces involved are related by the following dense and closed inclusions:
$$ H^2(Q) \overset{\text{dense}}\subset V\overset{\text{closed}}\subset W\overset{\text{dense}}\subset H^1(Q). $$
The important question of whether $V = W$, i.e.\ whether $C^\infty(\conj Q)$ is dense in $W$, is still unclear and is the subject of ongoing research.
The density of smooth function in a related space--time graph space for a wave operator with $L^2$ traces has recently been studied in \cite[Thm.~5.10]{BerggrenHagg2021}; see also \cite[Thm.~3.6]{FuhrerGonzalezKarkulik2025} and \cite[Thm.~4.1]{GopalakrishnanSepulveda2019} for spaces with homogeneous traces.
\end{remark}
Both $V$ and $W$ are Hilbert spaces with norm $\|\cdot\|_V$, so, together with \eqref{eq:vpV}, we can consider the more general problem:
\begin{equation}\label{eq:vpW}
	\text{Find $u \in W$ such that}\qquad     b(u,v) = F(v) \qquad \forall v \in W.
\end{equation}
We prove in \S\ref{sec:CC} that \eqref{eq:vpV} and \eqref{eq:vpW} are well-posed when $\Omega$ is star-shaped and the parameters in $b$ are chosen judiciously.
Under these assumptions, which of the variational problems \eqref{eq:vpV} and \eqref{eq:vpW} gives the generalised solution of Definition~\ref{def:imp_gsol}?
We can observe three situations:
\begin{itemize}
\item If the IBVP data only have the low regularity stated in Definition~\ref{def:imp_gsol}, then the generalised solution is not guaranteed to be in $W$ (in particular the traces on $\SI$ and $\OT$ may not be sufficiently smooth).
Thus the solutions to \eqref{eq:vpV} and \eqref{eq:vpW} are not necessarily related to \eqref{eq:IBVP}.

\item If $u\in W\setminus V$---which we do not know if it is possible at all---then, in principle, the solutions of \eqref{eq:IBVP} and \eqref{eq:vpW} might differ because Proposition~\ref{prop:EquivLadyzVP} only holds for $u\in V$.
    Then the solution of \eqref{eq:vpV} is a projection on $V$ (through the bilinear form $b$) of the solution of \eqref{eq:vpW}.
    We expect that a regularity analysis more refined than that in Theorem~\ref{thm:t_regularity} can ensure $u\in W$ under assumptions on the data weaker than \eqref{eq:cond_reg0}--\eqref{eq:compatibility}.

\item If the generalised solution $u$ belongs to $V$, then $u$ is solution of both variational problems \eqref{eq:vpV} and \eqref{eq:vpW} (if the conditions for their well-posedness in \S\ref{sec:CC} are met).
This is ensured, e.g.\ if the data regularity and compatibility assumptions \eqref{eq:cond_reg0} and \eqref{eq:compatibility} hold,
and $\Omega$ has a $C^{1,1}$ boundary, since then $u$ belongs to $H^2(Q)$ and thus to $V$ by Theorem~\ref{thm:t_regularity}, and is a strong solution of \eqref{eq:IBVP} almost everywhere.
\end{itemize}

\subsection{Coercivity and continuity}
\label{sec:CC}
The main result of this paper is the coercivity of the bilinear form \eqref{eq:def:bilin}.
This is proved in Theorem~\ref{thm:coerc_b} using only the identities of \S\ref{sec:abstract_framework}--\ref{sec:Identities}--\ref{sec:forms} and elementary inequalities.
The coercivity allows to use Lax--Milgram theorem and obtain well-posedness of the variational problems \eqref{eq:vpV}--\eqref{eq:vpW}.

The key geometrical assumption is the following.
\begin{assumption}[Star-shaped $\Omega$]\label{ass:StarS}
	There exists $\deltaI > 0 $ such that $\vect{x}\cdot\vect{n} \geq \deltaI \LI$ for almost every $\vect x\in \GI$.
\end{assumption}
Assumption~\ref{ass:StarS} is equivalent to the requirement that the space domain $\Omega$ is star-shaped with respect to the ball $B_{\deltaI\LI}$ of radius $\deltaI \LI$ centred at the origin, \cite[Rem.~3.5]{Moiola2014} (i.e.\ all straight segments between any point in $B_{\deltaI\LI}$
and any point in $\Omega$ are contained in $\Omega$).
Recall that $\Omega$ is Lipschitz, so the outward-pointing unit vector field $\vect n$ is defined almost everywhere on $\GI$.

The coercivity constant of $b(\cdot,\cdot)$ and the conditions under which coercivity holds depend explicitly on several parameters related to the domain $Q$
(the space dimension $d$,
the final time $T$,
the space diameter $\LI$,
the star-shapedness parameter $\deltaI$),
the IBVP \eqref{eq:IBVP}
(the wave speed $c$,
the impedance parameter $\theta$),
and the formulation \eqref{eq:vpV}
(the penalty parameters $A_Q$, $A_\OZ$,
the parameters $\xi$, $\beta$, $\nu=T^*/T$ entering the multiplier $\oM$ in \eqref{eq:multiplier_def}).

\begin{theorem}[Coercivity of $b$]\label{thm:coerc_b}
	Under Assumption~\ref{ass:StarS} on $\Omega$, if $\xi > 0$, $A_Q, A_\OZ > 0$ and
	\begin{equation}
		\label{eq:coefficient_coercivity_cond}
		\beta \geq \max
		\begin{cases}
			\xi(d-1),                                               \\
			\xi\frac{1}{\nu - 1}\left(\frac{\LI}{cT}+ 1\right), \\
			\xi\frac{1}{\nu - 1}\frac{\LI}{cT}\left(\theta + \frac1{\deltaI \theta}\right),
		\end{cases}
	\end{equation}
	then for all $u \in W$
	\begin{equation} \label{eq:coercivity_condition}
		b(u,u) \geq {\alpha_b} \|u\|_V^2
		\qquad \text{with}\qquad
		\alpha_b := \min \left\{\frac{\xi\deltaI}{4}, \; A_Q, \; A_\OZ\right\}.
	\end{equation}
\end{theorem}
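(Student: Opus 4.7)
The plan is to apply the abstract identity \eqref{eq:abstract_coercivity_condition}, which in the real-valued setting reads $b(v,v) = \ell(v,v) + \tfrac12 s(v,v) - \tfrac12 r(v,v)$, substitute the explicit expressions \eqref{eq:r_def}--\eqref{eq:ell_def}, and split the result into four groups of terms: a volume integral on $Q$ and boundary integrals on $\OZ$, $\OT$, and $\SI$. I would then bound each group from below by the matching weighted $L^2$-contributions in $\|v\|_V^2$. Continuity of all forms in $\|\cdot\|_V$ and the density of $C^\infty(\conj Q)$ in $V$ let me argue for smooth $v$ and extend by closure; the same estimate holds verbatim for $v\in W$ because $W$ carries the same norm and all the identities used remain valid there.

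For the three easier groups: first, $\ell(v,v) = A_Q T^2 \|\oW v\|_Q^2 + A_\OZ T^{-1}\|v\|_\OZ^2$ directly supplies two of the ten weighted norm pieces with constants $A_Q$ and $A_\OZ$. Next, the $Q$-volume portion of $-\tfrac12 r$ equals $\tfrac12\int_Q\bigl[(\beta+\xi d)(\TimeDer v)^2 + c^2(\beta+2\xi-d\xi)|\Grad v|^2\bigr]\dx\dt$: both coefficients exceed $\xi\deltaI/2$ as soon as $\beta \geq \xi(d-1)$ (using $\deltaI \leq 1$), which is the first condition of \eqref{eq:coefficient_coercivity_cond}. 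On $\OT$, after using $T-T^* = -T(\nu-1)$, the contribution reduces to $\int_{\OT} \bigl[\xi\vx\cdot\TimeDer v\,\Grad v + \tfrac12\beta T(\nu-1)\bigl((\TimeDer v)^2 + c^2|\Grad v|^2\bigr)\bigr]\dx$, and Young's inequality $|\xi\vx\cdot\TimeDer v\,\Grad v| \leq \tfrac{\xi\LI}{2c}(\TimeDer v)^2 + \tfrac{\xi\LI c}{2}|\Grad v|^2$ (with $|\vx|\leq \LI$) leaves the surviving factor $\tfrac12(\beta T(\nu-1) - \xi\LI/c)$, which dominates $\tfrac{\xi\deltaI T}{4}$ exactly under the second condition of \eqref{eq:coefficient_coercivity_cond}; the $\OZ$ analogue (with $T^*=\nu T > T(\nu-1)$ in place of $T(\nu-1)$) is then automatic.

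The main obstacle is the lateral boundary $\SI$. The cancellation between $-\tfrac12 r$ and $\tfrac12 s$ annihilates the two $c^2 \oM v\,\NormDer v$ terms and doubles $-\tfrac{c}{\theta}\TimeDer v\,\oM v$, leaving
\[
    \int_\SI \left[ -\tfrac{c}{\theta}\TimeDer v\,\oM v + \tfrac{\xi}{2}(\vx\cdot\vn)\bigl(-(\TimeDer v)^2 + c^2|\Grad v|^2\bigr) \right]\dS\dt.
\]
Expanding $\oM v = -\xi(\vx\cdot\vn)\NormDer v - \xi\,\vx\cdot\GradGI v + \beta(t-T^*)\TimeDer v$ together with $|\Grad v|^2 = (\NormDer v)^2 + |\GradGI v|^2$ turns the integrand into a quadratic form in the three unknowns $\TimeDer v, \NormDer v, \GradGI v$ with positive contributions $\tfrac{c\beta(T^*-t)}{\theta}(\TimeDer v)^2 \geq \tfrac{c\beta T(\nu-1)}{\theta}(\TimeDer v)^2$, $\tfrac{\xi}{2}(\vx\cdot\vn) c^2(\NormDer v)^2$, and $\tfrac{\xi}{2}(\vx\cdot\vn)c^2|\GradGI v|^2$, the last two being at least $\tfrac{\xi\deltaI\LI c^2}{2}$ times their respective squares via Assumption~\ref{ass:StarS}. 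The hard part is to absorb the indefinite $-\tfrac{\xi}{2}(\vx\cdot\vn)(\TimeDer v)^2$, the normal cross term $\tfrac{c\xi(\vx\cdot\vn)}{\theta}\TimeDer v\,\NormDer v$, and the tangential cross term $\tfrac{c\xi}{\theta}\TimeDer v\,\vx\cdot\GradGI v$ (bounded using $|\vx\cdot\GradGI v|\leq\LI|\GradGI v|$) via weighted Young inequalities. The natural splitting is to reserve half of each positive $|\Grad v|$-term for the final coercivity bound (yielding $\tfrac{\xi\deltaI\LI c^2}{4}(\NormDer v)^2$ and $\tfrac{\xi\deltaI\LI c^2}{4}|\GradGI v|^2$) and spend the other half on Young absorbing the two cross terms; this produces negative $(\TimeDer v)^2$ deficits of orders $\tfrac{\xi\LI}{\theta^2}$ and $\tfrac{\xi\LI}{\deltaI\theta^2}$. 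Requiring that the sum of all negative $(\TimeDer v)^2$ coefficients, plus a reserve of $\tfrac{\xi\deltaI\LI}{4}$ for the norm contribution $\LI\|\TimeDer v\|_\SI^2$, be dominated by $\tfrac{c\beta T(\nu-1)}{\theta}$ yields, upon rearrangement, the third condition of \eqref{eq:coefficient_coercivity_cond}, namely $\beta(\nu-1)\geq \tfrac{\xi\LI}{cT}\bigl(\theta + 1/(\deltaI\theta)\bigr)$. Assembling the four regional lower bounds then gives $b(v,v) \geq \alpha_b \|v\|_V^2$ with $\alpha_b = \min\{\xi\deltaI/4,\, A_Q,\, A_\OZ\}$.
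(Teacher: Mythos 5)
Your overall strategy --- the identity $b(v,v)=\ell(v,v)+\tfrac12 s(v,v)-\tfrac12 r(v,v)$, the four-region splitting into $Q$, $\OZ$, $\OT$, $\SI$, and the weighted Young inequalities --- is exactly the paper's, and your treatment of $Q$, $\OZ$ and $\OT$ is correct and matches the paper's computation. The gap is in the $\SI$ estimate: the normal/tangential decomposition does not close under the stated hypothesis \eqref{eq:coefficient_coercivity_cond}. Splitting $\vx\cdot\Grad v$ into $(\vx\cdot\vn)\NormDer v$ plus a tangential part and running two \emph{separate} Young inequalities, each absorbed by half of the corresponding piece of $\tfrac{\xi}{2}(\vx\cdot\vn)c^2|\Grad v|^2$, produces the two deficits you quote, $\tfrac{\xi\LI}{\theta^2}$ and $\tfrac{\xi\LI}{\deltaI\theta^2}$; their sum strictly exceeds the single deficit $\tfrac{\xi\LI}{\deltaI\theta^2}$ that one gets by keeping the gradient whole. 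Adding the $\tfrac{\xi\LI}{2}$ coming from $-\tfrac{\xi}{2}(\vx\cdot\vn)(\TimeDer v)^2$ and your reserve $\tfrac{\xi\deltaI\LI}{4}$, your bookkeeping requires
\begin{equation*}
\frac{c\beta T(\nu-1)}{\theta}\;\ge\;\xi\LI\Big(\frac{1}{\theta^2}+\frac{1}{\deltaI\theta^2}+\frac12+\frac{\deltaI}{4}\Big),
\end{equation*}
whereas the third condition in \eqref{eq:coefficient_coercivity_cond} only supplies $\xi\LI\big(1+\tfrac{1}{\deltaI\theta^2}\big)$. These are compatible only when $\theta^{-2}\le\tfrac12-\tfrac{\deltaI}{4}$; for instance with $\theta=\deltaI=1$ you need $\tfrac{11}{4}\xi\LI$ but have only $2\xi\LI$. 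So the claim that this ``yields, upon rearrangement, the third condition'' is false: your splitting forces a strictly stronger lower bound on $\beta$ whenever $\theta\le\sqrt2$.

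The repair is simply not to decompose. The norm \eqref{eq:norm_def} deliberately contains the full $c^2\LI\|\Grad v\|_{\SI}^2$, so there is no need to track $\NormDer v$ and the tangential gradient separately. Bound the single mixed term by $\big|\tfrac{c\xi}{\theta}(\vx\cdot\Grad v)\TimeDer v\big|\le\tfrac{c\xi\LI}{\theta}|\Grad v|\,|\TimeDer v|$ and apply one weighted Young inequality with weight $\epsilon_{\SI}=\tfrac{\theta\deltaI c}{2}$: this consumes $\tfrac{\xi\deltaI}{4}c^2\LI|\Grad v|^2$ of the available $\tfrac{\xi\deltaI}{2}c^2\LI|\Grad v|^2$, leaving $\tfrac{\xi\deltaI}{4}c^2\LI|\Grad v|^2$ for the norm, and costs only $\tfrac{\xi\LI}{\deltaI\theta^2}(\TimeDer v)^2$. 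The total $(\TimeDer v)^2$ expenditure, including the indefinite term and a reserve of $\tfrac{\xi\LI}{2}$, is then exactly $\xi\LI\big(1+\tfrac{1}{\deltaI\theta^2}\big)$, which is precisely what the third condition provides; this is the paper's argument.
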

\begin{proof}
	Let $u \in W$.
	We recall from \eqref{eq:abstract_coercivity_condition} that, in order to prove the coercivity of $b(\cdot,\cdot)$, it is sufficient to show a lower bound for $-r(u,u) + s(u,u) + 2\ell(u,u)$. For simplicity we can find a lower bound for $-r(u,u) + s(u,u)$ and then improve on it by including the terms in $\ell(u,u)$.
	From \eqref{eq:r_def}--\eqref{eq:s_def}, simple calculations lead to
	\begin{align}\nonumber
		-r(u,u) + s(u,u) & =
		\int_{Q} \big[
			(\beta + \xi d) (\TimeDer{u})^2
			+ c^2 ( \beta + 2\xi - d\xi)|\Grad u|^2
		\big] \dx \dt                                           \\ \nonumber
		                 & + \int_{\OT} \big[
			2 \xi (\vect{x} \cdot \Grad u) \TimeDer{u}
			- \beta (T-T^*)\left(
			(\TimeDer{u})^2
			+ c^2 |\Grad u|^2
		\right) \big] \dx                                       \\
		                 & + \int_{\OZ}
		\big[\beta T^* \left(
			(\TimeDer{u})^2
			+ c^2 |\Grad u|^2\right)
			+ 2 \xi (\vect{x} \cdot \Grad u) \TimeDer{u} \big]\dx
		\label{eq:coercivity_step1}                             \\
		                 & + \int_{{\SI}} \left[
			2 \frac c\theta\xi (\vect{x} \cdot \Grad u) \TimeDer{u}
+ c^2 \xi (\vect{x} \cdot \vect{n}) |\Grad u|^2
			- \left(\xi (\vect{x} \cdot \vect{n})
			+ 2\frac{c}{\theta}{\beta} (t-T^*)\right) (\TimeDer{u})^2
			\right] \dS \dt
		\nonumber
		\\
		                 & =: I_Q+I_\OT+I_\OZ+I_{\SI}.\nonumber
	\end{align}
	The mixed terms containing $\Grad u \TimeDer{u}$ can be lower-bounded by negative terms using the H\"older inequality, and then the products of the norms of $\Grad u$ and $\TimeDer{u}$ (on $\OZ$, $\OT$ and ${\SI}$) can be controlled by the weighted Young inequality:
\begin{align*}
		\int_{D}2  (\vect{x} \cdot \Grad u) \TimeDer{u} \ge - 2 \LI \norm{\Grad u}_D \norm{\TimeDer{u}}_D
		 & \geq -\epsilon_D \LI \norm{\Grad u}^2_D -\frac{1}{\epsilon_D} \LI \norm{\TimeDer{u}}^2_D,
		\qquad D\in\{\Omega_0,\Omega_T,{\SI}\},\; \epsilon_D>0.
	\end{align*}
Here we also used that $\LI = \max\{|\vx\|: x\in\Omega\}$.
	Combining these inequalities into \eqref{eq:coercivity_step1} it is possible to determine conditions for the coefficients $\beta$, $\xi$, $T^*$.
	We tackle the different integrals independently.

	\textit{Integrals over $\OT$ and over $\OZ$.}
	Recalling that $T^* = \nu T$, the integrals over $\OT$ and $\OZ$ are bounded from below as follows:
	\begin{align*}
		 & I_\OT\geq \left(
		\beta (\nu-1) - \frac{\xi}{\epsilon_{\Omega_T}}\frac{\LI}{T}
		\right) T \norm{\TimeDer{u}}^2_{\OT}
		+ \left(\beta (\nu-1) - \epsilon_{\Omega_T} \xi \frac{\LI}{c^2 T}
		\right) c^2 T \norm{\Grad u}^2_{\OT},
		\\
		 & I_\OZ\geq \left(
		\beta \nu - \frac{\xi}{\epsilon_{\Omega_0}} \frac{\LI}{T}
		\right) T \norm{\TimeDer{u}}^2_{\OZ}
		+ \left(
		\beta \nu - \epsilon_{\Omega_0} \xi \frac{\LI}{c^2 T}
		\right) c^2 T \norm{\Grad u}^2_{\OZ}.
	\end{align*}
	Taking $\epsilon_{\Omega_T} = \epsilon_{\Omega_0} = c$, the coefficients in the two brackets of each inequality are dimensionless and coincide.
From the assumption \eqref{eq:coefficient_coercivity_cond} on $\beta$,
\begin{equation*}
		\beta \nu - \xi \frac{\LI}{cT} \geq \beta (\nu - 1) - \xi \frac{\LI}{cT} \geq \xi > 0,
	\end{equation*}
	thus
	\begin{equation}\label{eq:coerc:OTOZ}
		I_\OT\geq \xi T \norm{\TimeDer{u}}^2_\OT+\xi c^2 T \norm{\Grad u}^2_\OT,\qquad
		I_\OZ\geq \xi T \norm{\TimeDer{u}}^2_\OZ+\xi c^2 T \norm{\Grad u}^2_\OZ.
	\end{equation}

	\textit{Integral over ${\SI}$.}
	Using the same strategy as for the other integrals and the fact that $\vect{x} \cdot \vect{n} \geq \deltaI \LI$ for all $\vect{x} \in \partial {\Omega}$ by Assumption~\ref{ass:StarS}, it holds that
	\begin{align*}
		I_{\SI} \geq \left(
		\xi \deltaI - \frac{\epsilon_\SI \xi}{\theta} \frac{1}{c}
		\right) c^2 \LI \norm{\Grad u}^2_{{\SI}}
		+ \left(
		\frac2\theta \beta (\nu - 1) \frac{cT}{\LI} - \xi \Big(1 + \frac{c}{\theta \epsilon_{\SI}}\Big)
		\right) \LI \norm{\TimeDer{u}}_{{\SI}}^2.
	\end{align*}
	We take $\epsilon_{\SI} = \frac{\theta \deltaI c}{2}$, so that $ \xi \deltaI - \frac{\epsilon_{\SI}}{\theta c} \xi = \xi\deltaI/2 > 0$. From the the last condition \eqref{eq:coefficient_coercivity_cond} on $\beta$,
	\begin{equation*}
			\frac2\theta  \beta (\nu - 1) \frac{cT}{\LI} - \xi \left(1 + \frac{2}{\deltaI \theta^2} \right)
\geq \xi
	\end{equation*}
	thus
	\begin{equation}\label{eq:coerc:S}
		I_{\SI}\ge \frac{\xi\deltaI}2 c^2 \LI \norm{\Grad u}^2_{{\SI}}  + \xi \LI \norm{\TimeDer{u}}_{{\SI}}^2.
	\end{equation}

	\textit{Integral over ${Q}$.}
	Using again the assumption \eqref{eq:coefficient_coercivity_cond},
	\begin{equation}\label{eq:coerc:Q}
		I_Q \ge \xi\norm{\TimeDer{u}}^2_{{Q}} +\xi c^2\norm{\Grad u}^2_{{Q}}.
	\end{equation}

	\textit{Coercivity constant.}
To obtain the coercivity bound, we recall \eqref{eq:abstract_coercivity_condition}, sum the bounds \eqref{eq:coerc:OTOZ}, \eqref{eq:coerc:S}, and \eqref{eq:coerc:Q}, use that $\deltaI\le1$ and the definition \eqref{eq:norm_def} of the norm $\|\cdot\|_V$:
\begin{align*}
		b(u,u)
		 & = \ell(u,u) -\frac12 r(u,u)+\frac12s(u,u)                                                                      \\
		 & =A_Q T^2\norm{\oW u}_Q^2 + A_\OZ\frac1T\|u\|_\OZ^2+\frac12( I_Q+I_\OT+I_\OZ+I_\SI)                             \\
		 & \ge A_Q T^2\norm{\oW u}_Q^2 + A_\OZ\frac1T\|u\|_\OZ^2+\frac\xi2\Big(
		T \norm{\TimeDer{u}}_{\OT}^2
		+ c^2 T \norm{\Grad u}_{\OT}^2
		+ T \norm{\TimeDer{u}}_{\OZ}^2
		+ c^2 T \norm{\Grad u}_{\OZ}^2                                                                                    \\
		 & \hspace{50mm} +  \LI \norm{\TimeDer{u}}_{{\SI}}^2 + \frac{\deltaI}{2} c^2 \LI \norm{\Grad u}_{{\SI}}^2
		+ \norm{\TimeDer{u}}_{{Q}}^2 + c^2 \norm{\Grad u}_{{Q}}^2\bigg)                                                   \\
		 & \ge \min\Big\{\frac{\xi\deltaI}4, A_Q, A_\OZ\Big\} \norm{u}_V^2.
	\end{align*}
\end{proof}

To apply Lax--Milgram theorem we need the boundedness of the linear and bilinear forms $F(\cdot)$ and $b(\cdot,\cdot)$ in $\|\cdot\|_V$.
These two conditions are clearly satisfied, but in order to estimate the quasi-optimality constant of the Galerkin method, the continuity constant of $b(\cdot,\cdot)$ is needed.

\begin{proposition}[Continuity of $b$ and $F$]\label{prop:continuity_b}
	For all $u,v\in W$,
\begin{equation}\label{eq:boundedness_bilinear}
		|b(u,v)| \leq C_b \|u\|_V \|v\|_V
		\qquad \text{with}\qquad
		C_b := \sqrt{3} \max
		\begin{cases}
			\beta + \xi d + \beta \nu,                                             \\
			\xi \frac{\LI}{cT} + \beta + 2 \xi - d \xi,                        \\
			\beta  (\nu-1) + \xi \frac{\LI}{cT},                               \\
			\big(\frac1\theta +1\big) \big(\beta\nu \frac{cT}{\LI} + \xi\big), \\
			2 \xi,                                                                 \\
			A_Q,                                                                   \\
			A_\OZ,
		\end{cases}
	\end{equation}
	and
	\begin{equation*}
|F(v)|\leq C_F \|(f, u_0, u_1, \gI)\|_{\mathtt{d}}\;\|v\|_V
\qquad \text{with}\qquad
C_F:=\max
\begin{cases}
		\xi \frac{\LI}{c T}+ \beta \nu + A_Q,\\
		\xi + \beta \nu \frac{c T}{\LI},\\
		A_\OZ,\\
		\sqrt2 \Big(\beta \nu + \xi\frac{\LI}{cT}\Big),
\end{cases}
	\end{equation*}
	where the data of the IBVP \eqref{eq:IBVP} are measured in the norm
	\begin{equation}\label{eq:dnorm}
		\|(f, u_0, u_1, \gI)\|_{\mathtt{d}}^2
		:= T^2 \|f\|_Q^2  + T^{-1} \|u_0\|_\OZ^2 + T\|u_1\|_\OZ^2 + c^2T\|\Grad u_0\|_\OZ^2+ c^2\LI\|\gI\|_\SI^2.
	\end{equation}
\end{proposition}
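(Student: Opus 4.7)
The proof is a bookkeeping exercise based on Cauchy--Schwarz applied to each integral in \eqref{eq:def:bilin} and \eqref{eq:def:lin}, combined with the pointwise triangle-inequality estimate on the multiplier
\[
|\oM w(\vx,t)|\leq \xi \LI\, |\Grad w(\vx,t)| + \beta\,|t-T^*|\,|\TimeDer w(\vx,t)|,
\]
valid on $\overline Q$ since $|\vx|\leq \LI$, together with $|t-T^*|\leq \nu T$ on $Q$, $|t-T^*|=(\nu-1)T$ on $\OT$, and $|t-T^*|=\nu T$ on $\OZ$. One also uses $|\vx\cdot\vn|\leq \LI$ on $\SI$ and $|\NormDer v|\leq |\Grad v|$. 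No analytic tool beyond these and Cauchy--Schwarz is needed; the only delicacy is the careful bookkeeping of dimensional rescalings.

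\textbf{Continuity of $b$.} Applying these bounds and rescaling every resulting product of $L^2$ norms by appropriate powers of $T$, $c$, $\LI$ so that each factor coincides with a summand of $\|u\|_V^2=\sum_i a_i^2$ and $\|v\|_V^2=\sum_j b_j^2$ (with $a_i$ ranging over $\|\TimeDer u\|_Q$, $c\|\Grad u\|_Q$, $T\|\oW u\|_Q$, $\sqrt T\|\TimeDer u\|_{\OT}$, $c\sqrt T\|\Grad u\|_{\OT}$, $T^{-1/2}\|u\|_\OZ$, $\sqrt{\LI}\|\TimeDer u\|_\SI$, $c\sqrt{\LI}\|\Grad u\|_\SI$, and analogously for $v$) produces $|b(u,v)|\leq \sum_{i,j} C_{ij}\, a_i\, b_j$ with explicit non-negative coefficients $C_{ij}$. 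A direct inspection of \eqref{eq:def:bilin} shows that the seven nonzero row sums $\sum_j C_{ij}$ coincide exactly with the seven quantities in the braces of $C_b$; moreover every column of $(C_{ij})$ contains at most three nonzero entries, the worst case being the column $T\|\oW v\|_Q$, which collects the coefficients $\beta\nu$, $\xi\LI/(cT)$, $A_Q$ from the rows $\|\TimeDer u\|_Q$, $c\|\Grad u\|_Q$, $T\|\oW u\|_Q$, each sitting in a distinct row. Hence the maximum column sum of $(C_{ij})$ is at most three times the maximum row sum, and Schur's test (equivalently, two successive applications of Cauchy--Schwarz first along rows, then across rows) gives
\[
|b(u,v)|\leq \sqrt{(\text{max row sum})\cdot(\text{max column sum})}\,\|u\|_V\|v\|_V\leq \sqrt{3}\,(\text{max row sum})\,\|u\|_V\|v\|_V,
\]
which is precisely the asserted bound with the stated $C_b$.

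\textbf{Continuity of $F$.} The same strategy applied to \eqref{eq:def:lin} produces $|F(v)|\leq \sum_{i,j}D_{ij}\, e_i\, b_j$ where $e_i$ ranges over the summands of $\|(f,u_0,u_1,\gI)\|_{\mathtt d}^2$ (namely $T\|f\|_Q$, $T^{-1/2}\|u_0\|_\OZ$, $T^{1/2}\|u_1\|_\OZ$, $cT^{1/2}\|\Grad u_0\|_\OZ$, $c\sqrt{\LI}\|\gI\|_\SI$). Applying Cauchy--Schwarz column-wise and then on the remaining sum yields
\[
|F(v)|\leq \|v\|_V\bigg(\sum_i e_i^2 \sum_j k_j\, D_{ij}^2\bigg)^{1/2},
\]
where $k_j\in\{1,2\}$ is the number of nonzero entries in column $j$; $k_j=2$ only for the two columns $\sqrt T\|\TimeDer v\|_\OZ$ and $c\sqrt T\|\Grad v\|_\OZ$, which collect contributions from both $u_1$ and $\Grad u_0$ through the $\OZ$ integral. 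Combined with $\sqrt{2(a^2+b^2)}\leq \sqrt 2 (a+b)$ for $a,b\ge 0$, this produces the $\sqrt 2$ factor in the last case of $C_F$; the remaining three cases arise from columns with $k_j=1$ and match directly. The hardest part of the entire argument is the verification of the structure of the coefficient matrices $(C_{ij})$ and $(D_{ij})$ --- in particular the column-count bound of three in the $b$ case --- after which the estimates drop out of elementary inequalities.
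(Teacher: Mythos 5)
Your proof is correct and follows essentially the same route as the paper: the paper also organises the Cauchy--Schwarz estimates into a coefficient matrix $\mat M$ acting on the $10$-vectors of scaled norm components, bounds $\|\mat M\|_2\le\sqrt3\,\|\mat M\|_1$ (your max-row-sum/max-column-sum Schur-test argument, with the $3$ coming from the $3\times3$ block containing $T\|\oW v\|_Q$), and identifies the column sums with the entries of $C_b$; the $F$ estimate is likewise a vector Cauchy--Schwarz with the $\sqrt2$ arising from the two $\OZ$ contributions of $u_1$ and $\Grad u_0$. The only difference is presentational: the paper writes out the block matrices $\mat M_Q,\mat M_\OT,\mat M_\OZ,\mat M_\SI$ and the vector $\vect F_F$ explicitly, whereas you describe the same coefficient arrays implicitly through their row/column structure.
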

\begin{proof}
	The proof follows the same argument of \cite[Thm.~4.5]{Moiola2014}.
	For $w\in W$, define
	\begin{equation*}\begin{split}
			\vect m(w) = \Big(
			&\|\TimeDer{w}\|_Q,\;
			c \|\Grad w \|_Q,\;
			T\|\SecondTimeDer w - c^2 \Lap w\|_Q,\;
			T^{\frac{1}{2}} \|\TimeDer w\|_\OT,\;
			c T^{\frac{1}{2}} \|\Grad w\|_\OT, \\
			& \, T^{-\frac12} \|w\|_\OZ,\;
			T^{\frac{1}{2}} \|\TimeDer w\|_\OZ,\;
			c T^{\frac{1}{2}} \|\Grad w\|_\OZ,\;
			\LI^{\frac{1}{2}} \|\TimeDer w\|_\SI,\;
			c \LI^{\frac{1}{2}} \|\Grad w\|_\SI
			\Big)^\top \in \R^{10},
		\end{split}
	\end{equation*}
	so that $\|\vect m(w)\|_2=\|w\|_V$.
	Let
	$$
		\begingroup
		\setlength\arraycolsep{2pt}
		\mat M = \begin{pmatrix}
			\mat M_Q &            &            &            \\
			         & \mat M_\OT &            &            \\
			         &            & \mat M_\OZ &            \\
			         &            &            & \mat M_\SI
		\end{pmatrix}
		\endgroup
	$$
	and
	\begin{alignat*}{2}
		 & \begingroup
		\setlength{\arraycolsep}{4pt}
		\mat M_Q = \begin{pmatrix}
			           \beta + \xi d & 0                       & 0   \\
			           0             & \beta + 2 \xi - d \xi   & 0   \\
			           \beta \nu     & \xi \frac{\LI}{c T} & A_Q
		           \end{pmatrix},
		\endgroup \qquad
&             & \begingroup
		\renewcommand{\arraystretch}{1.2}
		\mat M_\OT = \begin{pmatrix}
			             \beta (\nu-1)          & \xi\frac{ \LI}{c T} \\
			             \xi \frac{\LI}{cT} & \beta (\nu-1)
		             \end{pmatrix},
		\endgroup                    \\
& \begingroup
		\setlength{\arraycolsep}{5pt}
		\mat M_\OZ = \begin{pmatrix}
			             A_\OZ & 0 & 0 \\
			             0     & 0 & 0 \\
			             0     & 0 & 0
		             \end{pmatrix},
		\endgroup \qquad
&             & \begingroup
		\renewcommand{\arraystretch}{1.5}
		\setlength{\arraycolsep}{8pt}
		\mat M_\SI = \begin{pmatrix}
			             \frac{\beta \nu}{\theta} \frac{c T}{\LI} + \xi  & 0     \\
			             \beta \nu \frac{c T}{ \LI} + \frac{\xi}{\theta} & 2 \xi
		             \end{pmatrix}.
		\endgroup
	\end{alignat*}
	Then, from the definition \eqref{eq:def:bilin} of $b(\cdot,\cdot)$, it holds that
	\begin{align*}
		|b(u,v)| & \leq \left|\vect m(v)^\top \mat M \vect m(u) \right| \leq \|\mat M\|_2 \|\vect m(v)\|_2 \|\vect m(u)\|_2 \\
		         & \leq \max\big(
		\|\mat M_Q\|_2,
		\|\mat M_\OT\|_2,
		\|\mat M_\OZ\|_2,
		\|\mat M_\SI\|_2
		\big) \|u\|_V \|v\|_V                                                                                               \\
		         & \leq \sqrt{3} \max\big(
		\|\mat M_Q\|_1,
		\|\mat M_\OT\|_1,
		\|\mat M_\OZ\|_1,
		\|\mat M_\SI\|_1
		\big) \|u\|_V \|v\|_V.
	\end{align*}
	The proof of \eqref{eq:boundedness_bilinear} is complete observing that
	\begin{align*}
		 & \big\|\mat M_Q\big\|_1 = \max \Big\{
		\beta + \xi d + \beta \nu,\;
		\xi \LI (c T)^{-1} + \beta + 2\xi - d \xi,\;
		A_Q\Big\},                                                           \\
		 & \big\|\mat M_\OT\big\|_1 = \beta(\nu-1) + \xi \LI (c T)^{-1}, \\
		 & \big\|\mat M_\OZ\big\|_1 = A_\OZ,                                 \\
		 & \big\|\mat M_\SI\big\|_1 = \max \Big\{
		2\xi,\;
		( \theta^{-1} + 1) (\beta \nu c T \LI^{-1} + \xi )
		\Big\}.
	\end{align*}

	In order to control $F(v)$, let
	\begin{align*}
		\vect F_F = \Big(
		 & \beta \nu T\|f\|_Q, \;
		\xi \LI c^{-1} \|f\|_Q, \;
		A_Q T \|f\|_Q, \;
		0, \;
		0, \;
		A_\OZ T^{-\frac12} \|u_0\|_\OZ, \;
		\beta\nu T^{\frac{1}{2}}  \|u_1\|_\OZ + \xi \LI T^{-\frac12}\|\Grad u_0\|_\OZ,             \\
		 & \beta \nu cT^{\frac{1}{2}} \|\Grad u_0\|_\OZ + \xi \LI(c T^\frac12)^{-1}\|u_1\|_\OZ, \;
		\beta \nu c T \LI^{-\frac{1}{2}} \|\gI\|_\SI, \;
		\xi c\LI^{\frac{1}{2}} \|\gI\|_\SI \;
		\Big)^\top\in\R^{10},
	\end{align*}
	from which
	\begin{equation*}
		|F(v)| \leq \vect F_F \cdot \vect m(v)
		\leq \|\vect F_F\|_2 \|\vect m(v)\|_2 =
		\|\vect F_F\|_2 \|v\|_V,
	\end{equation*}
	and
	\begin{align*}
		\|\vect F_F\|_2^2 = \  & \left(\xi^2 \LI^2(c T)^{-2} + \beta^2 \nu^2 + A_Q^2\right)T^2\|f\|_Q^2          \\
		                       & + A_\OZ^2 T^{-1} \|u_0\|_\OZ^2
		+ 2 \big(\beta^2 \nu^2 + \xi^2 \LI^2 (c T)^{-2}\big) T \|u_1\|_\OZ^2                                     \\
		                       & + 2 \big(\beta^2 \nu^2 + \xi^2 \LI^2 (c T)^{-2} \big) c^2 T \|\Grad u_0\|_\OZ^2
		+ c^2\LI\left(\xi^2 + \beta^2\nu^2 c^2T^2\LI^{-2}\right)  \|\gI\|_\SI^2
		\\
		\leq                   & \max\bigg\{
		\xi^2\frac{\LI^2}{c^2T^2} + \beta^2 \nu^2 + A_Q^2,\;
		A_\OZ^2, \;
		2 \Big(\beta^2 \nu^2 + \xi^2 \frac{\LI^2}{c^2T^2}\Big), \;
		\xi^2 + \beta^2 \nu^2 \frac{c^2T^2}{\LI^2}
		\bigg\} \|(f, u_0, u_1, \gI)\|_{\mathtt{d}}^2.
	\end{align*}
\end{proof}

Lax--Milgram theorem \cite[Thm.~2.7.7]{BrennerScottRidgway2008}, together with Theorem~\ref{thm:coerc_b} and Proposition~\ref{prop:continuity_b}, immediately gives the well-posedness of the space--time variational problems.
\begin{corollary}[Stability]
	Let $\Omega$ satisfy Assumption \ref{ass:StarS},
	$\xi>0$, $A_Q>0$, $A_\OZ>0$, $\nu>1$,
	and $\beta$ satisfy \eqref{eq:coefficient_coercivity_cond}.
	Then each variational problem \eqref{eq:vpV} and \eqref{eq:vpW} admits a unique solution, which satisfies the bound
	\begin{align*}
		\|u\|_V \leq
		 & \max\left\{
			\xi \frac{\LI}{c T}+ \beta \nu + A_Q,\;
			\xi + \beta \nu \frac{c T}{\LI},\;
			A_\OZ,\;
			\sqrt2 \Big(\xi\frac{\LI}{cT} + \beta \nu \Big)
	 \right\}  \\
		 & \cdot\max\left\{\frac1{A_Q},\;\frac1{A_\OZ},\; \frac4{\xi\deltaI}\right\}\|(f, \gI,u_0,u_1)\|_{\mathtt{d}}.
	\end{align*}
\end{corollary}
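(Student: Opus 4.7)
The plan is a direct application of the Lax--Milgram theorem using the ingredients just established. First I would check that all the hypotheses are in place: $V$ is a Hilbert space by construction as the closure of $C^\infty(\overline Q)$ in $\|\cdot\|_V$, and $W$ is a Hilbert space with the same norm (as discussed in Remark~\ref{remark:V=W?}). Under the assumptions stated (Assumption~\ref{ass:StarS}, $\xi,A_Q,A_\OZ>0$, $\nu>1$, and $\beta$ as in \eqref{eq:coefficient_coercivity_cond}), Theorem~\ref{thm:coerc_b} gives $b(u,u)\ge\alpha_b\|u\|_V^2$ on $W$ (hence also on $V\subset W$) with $\alpha_b=\min\{\xi\deltaI/4,A_Q,A_\OZ\}$, while Proposition~\ref{prop:continuity_b} gives the continuity of $b$ on $W\times W$ with constant $C_b$ and the continuity of $F$ on $W$ with constant $C_F$. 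Applying Lax--Milgram (\cite[Thm.~2.7.7]{BrennerScottRidgway2008}) in each of the Hilbert spaces $V$ and $W$ then yields existence and uniqueness of the solution to \eqref{eq:vpV} and \eqref{eq:vpW}, respectively.

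For the a priori bound I would use the standard argument: testing the variational identity with the solution itself, coercivity and continuity of $F$ give
\[
\alpha_b\|u\|_V^2\le b(u,u)=F(u)\le C_F\|(f,u_0,u_1,\gI)\|_{\mathtt d}\,\|u\|_V,
\]
so dividing by $\|u\|_V$ produces $\|u\|_V\le (C_F/\alpha_b)\|(f,u_0,u_1,\gI)\|_{\mathtt d}$. To recover the exact form stated in the corollary, I would simply rewrite
\[
\frac{1}{\alpha_b}=\max\left\{\frac{1}{A_Q},\,\frac{1}{A_\OZ},\,\frac{4}{\xi\deltaI}\right\}
\]
and identify the first max in the statement with the expression of $C_F$ already computed in Proposition~\ref{prop:continuity_b}.

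There is no real obstacle here, as the heavy lifting has already been done in Theorem~\ref{thm:coerc_b} and Proposition~\ref{prop:continuity_b}. The only minor points to be attentive to are that the trial and test spaces coincide in both formulations (so Lax--Milgram rather than Banach--Ne\v cas--Babu\v ska suffices), and that the continuity bounds of Proposition~\ref{prop:continuity_b} were established on the larger space $W$, which automatically covers the case of the subspace $V$; both observations are immediate, so the corollary follows in a few lines.
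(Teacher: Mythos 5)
Your proposal is correct and follows exactly the route the paper intends: the corollary is stated there as an immediate consequence of Lax--Milgram combined with Theorem~\ref{thm:coerc_b} and Proposition~\ref{prop:continuity_b}, and the displayed constant is precisely $C_F\cdot\alpha_b^{-1}$ obtained from $\alpha_b\|u\|_V^2\le b(u,u)=F(u)\le C_F\|(f,u_0,u_1,\gI)\|_{\mathtt d}\|u\|_V$. Nothing is missing.
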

Recall that the data norm $\|\cdot\|_{\mathtt d}$ has been defined in \eqref{eq:dnorm}, and that the solutions of \eqref{eq:vpV} and \eqref{eq:vpW} coincide at least when the conditions in Theorem~\ref{thm:t_regularity} are met.

\subsection{Numerical implications}
\label{sec:NumImplications}
C\'ea's lemma \cite[Thm.~2.8.1]{BrennerScottRidgway2008}, together with Theorem~\ref{thm:coerc_b} and Proposition~\ref{prop:continuity_b}, gives the quasi-optimality of all conforming Galerkin discretisations of \eqref{eq:vpV}.
\begin{proposition}[Galerkin quasi-optimality]\label{lemma:cea}
Let $\Omega$ satisfy Assumption \ref{ass:StarS} and
\begin{equation}\label{eq:ParamChoice}
		\xi=1, \qquad \nu=2, \qquad \beta\ge\beta^\#:=\max\Big\{
		d-1,\;
		1+\frac{\LI}{cT},\;
		\big(\theta+(\theta\deltaI)^{-1}\big)\frac{\LI}{cT}
\Big\}.
	\end{equation}
Let $u \in V$ be the solution of \eqref{eq:vpV} and let $V_N \subset V$ be a closed subspace.
	Then the Galerkin method
	\begin{equation}\label{eq:Galerkin}
		\text{Find }u_N\in V_N \qquad\text{such that}\qquad b(u_N, v_N) = F(v_N) \qquad \forall v_N \in V_N
	\end{equation}
	is well-posed and its solution satisfies
	\begin{equation}
		\label{eq:QOest}
		\|u-u_N\|_V \leq C_{qo} \inf_{v_N \in V_N} \|u - v_N\|_V
	\end{equation}
	with
	\begin{equation}\label{eq:Cqo}
		C_{qo} = \sqrt3\max\left\{A_Q^{-1},A_\OZ^{-1},4\deltaI^{-1}\right\}\max\left\{A_Q,\;A_\OZ,\;
		3\beta+d,\;(1+\theta^{-1})\Big(1+2\beta\frac{cT}\LI\Big)
\right\}.
	\end{equation}
\end{proposition}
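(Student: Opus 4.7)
The plan is to chain together the coercivity result (Theorem~\ref{thm:coerc_b}), the continuity result (Proposition~\ref{prop:continuity_b}), and C\'ea's lemma. First, I would verify that the specific parameter choice \eqref{eq:ParamChoice} satisfies the hypotheses of Theorem~\ref{thm:coerc_b}: substituting $\xi=1$ and $\nu=2$ into the three conditions \eqref{eq:coefficient_coercivity_cond} reduces them exactly to $\beta\ge d-1$, $\beta\ge 1 + \LI/(cT)$, and $\beta\ge (\theta+(\theta\deltaI)^{-1})\LI/(cT)$, which are collected in $\beta^\#$. Hence, coercivity holds on $W$ (and in particular on $V_N\subset V\subset W$) with constant $\alpha_b = \min\{\deltaI/4,A_Q,A_\OZ\}$.

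Next, Proposition~\ref{prop:continuity_b} yields continuity with constant $C_b$ on $V_N\times V_N$. Since the Galerkin problem \eqref{eq:Galerkin} is a variational problem on the closed subspace $V_N\subset V$, Lax--Milgram ensures its well-posedness and C\'ea's lemma \cite[Thm.~2.8.1]{BrennerScottRidgway2008} gives the quasi-optimality estimate \eqref{eq:QOest} with $C_{qo}\le C_b/\alpha_b$.

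It remains to simplify $C_b/\alpha_b$ with the parameter choice \eqref{eq:ParamChoice} and the lower bound $\beta\ge\beta^\#$, which yields the explicit form in \eqref{eq:Cqo}. Evaluating the seven cases of the maximum in \eqref{eq:boundedness_bilinear} at $\xi=1$, $\nu=2$ gives respectively $3\beta+d$; $\beta+2-d+\LI/(cT)$; $\beta+\LI/(cT)$; $(1+\theta^{-1})(1+2\beta cT/\LI)$; $2$; $A_Q$; $A_\OZ$. Because $\beta\ge\beta^\#\ge 1+\LI/(cT)$, the second and third expressions are dominated by $2\beta+1\le 3\beta+d$ (using $\beta>0$, $d\ge 1$), and the fifth is also bounded by $3\beta+d$. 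Hence the maximum collapses to $\max\{3\beta+d,\,(1+\theta^{-1})(1+2\beta cT/\LI),\,A_Q,\,A_\OZ\}$. Dividing by $\alpha_b$ and pulling the minimum into a maximum of reciprocals produces the stated expression for $C_{qo}$.

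The only genuinely non-routine step is this last bookkeeping exercise: writing out the seven cases of $C_b$, checking which are absorbed by $3\beta+d$ under the assumption $\beta\ge\beta^\#$, and confirming that no other cross-dependence on $\theta$, $\LI/(cT)$ or $d$ prevents the stated simplification. Everything else is a direct invocation of results already established in \S\ref{sec:CC}.
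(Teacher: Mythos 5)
Your proposal is correct and follows exactly the route the paper intends: the paper's own justification is the one-line remark that C\'ea's lemma combined with Theorem~\ref{thm:coerc_b} and Proposition~\ref{prop:continuity_b} yields the result, and your verification that \eqref{eq:ParamChoice} satisfies \eqref{eq:coefficient_coercivity_cond} and your case-by-case reduction of the seven terms in $C_b$ (using $\LI/(cT)\le\beta-1$ and $d\ge1$ to absorb the second, third and fifth cases into $3\beta+d$) correctly fills in the bookkeeping behind \eqref{eq:Cqo}. No gaps.
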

As shown in the numerical tests
of section~\ref{sec:numExperiments}, tuning the parameters entering the definition of $b$ can be beneficial for the accuracy of a numerical method, but the choice made in \eqref{eq:ParamChoice} is a good compromise between simplicity and performance of the method.

With a sensible choice of the numerical parameters (e.g.\ $A_Q=A_\OZ=\xi=\nu-1=1$ and $\beta=\beta^\#$), the quasi-optimality constant $C_{qo}$ grows linearly both in the ``length'' and the ``width'' of the space--time cylinder $Q$, measured by the dimensionless ratios $\frac{cT}\LI$ and $\frac\LI{cT}$.

\begin{remark}[$V$-conforming discrete spaces]
Which finite element spaces $V_N$ are conforming in $V$?
Since $H^2(Q)\subset V$, any $H^2(Q)$-conforming space can be used to discretise~\eqref{eq:vpV}.

In particular, assume we are given a non-overlapping partition of $Q$ in Lipschitz $(d+1)$-dimensional elements.
Then, element-wise smooth and globally $C^1(\overline Q)$ elements are $H^2(Q)$-conforming \cite[Thm.~II.5.2]{Braess2007}, and therefore also $V$-conforming.

From the quasi-optimality and the density of the inclusion $H^2(Q)\subset V$, any sequence $\{V_N\}_{N\in\N}$ of $H^2(Q)$-conforming discrete spaces that is able to approximate any $v\in H^2(Q)$ gives a sequence $\{u_N\}_{N\in\N}$ of Galerkin solutions that converges to $u$, if this belongs to $V$.
\end{remark}

Are $C^1$ elements necessary though? Is it possible to use less regular elements? The following lemma clarifies that this is \textit{not} possible.
\begin{lemma}[$C^1$-conformity]
	\label{lemma:C1conformity}
	Let $\mathcal T$ be a space--time triangulation of $Q$. If $v \in V$ and $\forall K \in \mathcal T$ $v|_{\overline{K}} \in C^2(\overline{K})$, then $v \in C^1(\overline{Q}) \cap H^2(Q)$.
\end{lemma}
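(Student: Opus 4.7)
The plan is to reduce both assertions $v\in C^1(\overline Q)$ and $v\in H^2(Q)$ to a single fact: the jump of the space--time gradient $\nabla_{\vx,t} v$ vanishes across every $d$-dimensional interior face of the triangulation $\mathcal T$. Once this is established, the piecewise $C^2$ hypothesis immediately delivers $v\in C^1(\overline Q)$, and then its piecewise second derivatives, having no distributional singular parts on the internal skeleton, are bounded on every element and thus in $L^2(Q)$, giving $v\in H^2(Q)$.

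First I would argue that the jump across an interior face $\Gamma$ shared by $K_1,K_2\in\mathcal T$ is necessarily purely normal. Since $v\in V\subset H^1(Q)$, its trace on $\Gamma$ is single-valued, and the piecewise $C^2$ regularity then forces all tangential space--time derivatives of $v$ along $\Gamma$ to agree from both sides. Consequently the jump of the full space--time gradient can be written as $[\nabla_{\vx,t} v]=J\vec\nu$ on $\Gamma$, where $\vec\nu=(\vect{\nu}_\vx,\nu_t)\in\R^{d+1}$ is a unit normal to $\Gamma$ and $J$ is a scalar field that is continuous on $\Gamma$.

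Next I would extract the consequence of the $L^2$ regularity of $\oW v$ that is built into the norm $\|\cdot\|_V$. Because $v\in H^1(Q)$ the distributional first derivatives coincide with the piecewise ones, so all singular contributions appear only at the second-derivative level: a direct computation using $[\partial_j v]=J\nu_j$ gives $\partial_i\partial_j v=(\partial_i\partial_j v)_{\mathrm{pw}}+J\,\nu_i\nu_j\,\delta_\Gamma$ summed over all interior faces. Assembling $\oW v=\SecondTimeDer v-c^2\Lap v$ shows that its singular part on $\Gamma$ is $J\bigl(\nu_t^2-c^2|\vect{\nu}_\vx|^2\bigr)\,\delta_\Gamma$, and forcing this measure to vanish (since $\oW v\in L^2(Q)$) yields the pointwise identity
\begin{equation*}
J(\vx,t)\bigl(\nu_t^2-c^2|\vect{\nu}_\vx|^2\bigr)=0 \qquad\text{on }\Gamma.
\end{equation*}

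The main obstacle here is the possible presence of \emph{characteristic} interior faces, namely those on which $|\nu_t|=c|\vect{\nu}_\vx|$: on such faces the factor in parentheses vanishes identically and $J$ is not determined by the identity above, which is consistent with the fact that genuine weak solutions of the wave equation can exhibit gradient jumps across characteristic surfaces. Under the mild, geometric assumption that $\mathcal T$ has no characteristic interior face (automatic for tensor-product meshes, since $\nu_t=0$ or $\vect{\nu}_\vx=\bm 0$ makes $\nu_t^2-c^2|\vect{\nu}_\vx|^2\ne 0$, and generic for simplicial meshes), the identity forces $J\equiv 0$ on every interior face by continuity of $J$, which together with the piecewise $C^2$ regularity delivers $\nabla_{\vx,t} v\in C^0(\overline Q)$ and hence $v\in C^1(\overline Q)\cap H^2(Q)$ as claimed.
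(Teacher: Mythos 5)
Your argument is correct and arrives at exactly the same scalar condition as the paper's proof, but by a different computation, and it makes explicit a point that the paper's proof leaves implicit. The paper tests the identity $\int_Q(\phi\,\oW v-v\,\oW\phi)\dx\dt=0$ element by element and concludes that the jump of the conormal derivative $\partial_{\widetilde{\vn}}v$, with $\widetilde{\vn}=(-c^2\vn_{\vx},n_t)$, vanishes across each interior facet $F$; you instead compute the distributional Hessian of $v$ and identify the singular part of $\oW v$ on $F$ as $J\bigl(\nu_t^2-c^2|\vect{\nu}_{\vx}|^2\bigr)\delta_F$, where $J\vec\nu$ is the (purely normal) gradient jump. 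These are literally the same condition, since $[\partial_{\widetilde{\vn}}v]=-c^2\vect{\nu}_{\vx}\cdot[\Grad v]+\nu_t[\TimeDer v]=J\bigl(\nu_t^2-c^2|\vect{\nu}_{\vx}|^2\bigr)$. What your route buys is the observation about \emph{characteristic} facets: when $\nu_t^2=c^2|\vect{\nu}_{\vx}|^2$ the condition is vacuous --- equivalently, in the paper's language, $\widetilde{\vn}$ is then tangent to $F$, so continuity of $\partial_{\widetilde{\vn}}v$ already follows from continuity of $v$ and gives no control on the normal component of the gradient jump. Your added hypothesis that $\mathcal T$ has no characteristic interior facet is therefore not cosmetic but is actually needed for the conclusion: the solution of Problem~3 in \S\ref{sec:numExperiments} belongs to $V$, is piecewise smooth with respect to the partition of $Q$ by the characteristic line $x-ct+1=0$, yet is not in $C^1(\overline Q)\cap H^2(Q)$; so a mesh with a facet along that line would violate the lemma as stated. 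As you note, the restriction is automatic for tensor-product meshes (where every facet normal has either $\nu_t=0$ or $\vect{\nu}_{\vx}=\bm 0$) and holds for generic simplicial meshes, but it should be stated.
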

\begin{proof}
	The proof is the same as in \cite[Lemma~5.1]{Moiola2014}, using the Green's identity associated to the wave operator.
Let $v$ be as in the statement, then, applying Green's identities in space--time,
$$
	0=\int_Q (\phi \oW v - v \oW \phi) \dx\dt
	 =\sum_{K\in \mathcal T}\int_K (\phi \oW v - v \oW \phi) \dx\dt
	 =\sum_{K\in \mathcal T} \int_{\partial K} \left(\phi \partial_{\widetilde{\vn}} v - v \partial_{\widetilde{\vn}}\phi \right) \dS
	 \qquad\forall\phi \in \mathcal D(Q),
	$$
	where $\widetilde{\vn} = (-c^2\vn_{\vx}, n_t)$, and $(\vn_{\vx}, n_t)$ is the outward-pointing normal vector to $K$.
	Let $\mathcal F$ be the set of the facets of the triangulation.
	For every interior facet $F$, let $K^1_F$ and $K^2_F$ be the elements that share $F$.
	The above identity becomes
\begin{equation}
		\label{eq:femJumps}
		\sum_{F\in\mathcal F,\; F\subset Q} \int_F \left(\phi \partial_{\widetilde{\vn}}v|_{K^1_F} - \phi \partial_{\widetilde{\vn}}v|_{K^2_F} - v\partial_{\widetilde{\vn}} \phi|_{K^1_F} + v\partial_{\widetilde{\vn}}\phi|_{K^2_F} \right)\dS
		+ \sum_{F\in\mathcal F,\; F\subset \partial Q}\int_{F} \left(\phi \partial_{\widetilde{\vn}}v - v \partial_{\widetilde{\vn}} \phi \right)\dS= 0.
	\end{equation}
	Since $\phi \in \mathcal D(Q)$, $\phi$ and $\partial_{\widetilde{\vn}} \phi$ vanish on $\partial Q$, and
	the integrals over $F\cap \partial Q$ vanish.
	Moreover, since $\partial_{\widetilde{\vn}}\phi$ and $v$ are continuous across any facet $F=\partial K^1_F\cap \partial K^2_F$,
	we have $v\partial_{\widetilde{\vn}} \phi|_{K^1_F} - v\partial_{\widetilde{\vn}}\phi|_{K^2_F}=0$.
Finally, \eqref{eq:femJumps} can be rewritten as
	\begin{equation*}
		\sum_{F\in\mathcal F,\; F\subset Q}\int_{F} \phi \big(\partial_{\widetilde{\vn}}v|_{K^1_F} - \partial_{\widetilde{\vn}}v|_{K^2_F} \big)\dS = 0
		 \qquad\forall\phi \in \mathcal D(Q),
	\end{equation*}
	which shows that $\partial_{\widetilde{\vn}} v$ is continuous over every $F\in\mathcal F$. To conclude, note that the tangential gradients $\Grad_F (v|_{K^1_F}) = \Grad_F (v|_{K^2_F})$ on $F$ because $v$ is continuous across elements.
\end{proof}

Lemma~\ref{lemma:C1conformity} imposes a regularity constraint on the Galerkin spaces that can be used with formulation~\eqref{eq:vpV}.
On the other hand, high-regularity discrete spaces, in the spirit of isogeometric analysis, have already proved particularly beneficial in the context of wave propagation, see for example \cite{HughesRealiSangalli2008, PavarinoZampieri2021, FraschiniLoliMoiolaSangalli2023}.

\begin{remark}[DOF count: space--time vs time-stepping]\label{rem:DOFcount}
The obvious drawback of space--time methods is that they require the solution of large linear systems.
We show that,
when approximating smooth solutions with uniform
maximal-regularity splines,
a space--time scheme and a single step of any time-stepping method achieve the same accuracy solving linear systems of comparable sizes, if, in the former, one takes a slightly higher polynomial degree and coarser elements.
Assume that $u$ is smooth,
	\begin{equation*}
		Q = \left(-\LI, \LI \right)^d\times(0,T), \qquad \LI\sim T\sim c\sim1,
	\end{equation*}
	\begin{equation*}
		N_x  = \frac{2\LI}{h_x}, \qquad N_t = \frac{T}{h_t}, \qquad V_N = \mathbb Q^p(\mathcal T_h)\cap C^k(Q), \qquad  1\le k\le p-1,
	\end{equation*}
	where $\mathcal T_h$ is a uniform tensor-product mesh on $Q$ with $N_x^d$ Cartesian-product elements in space and $N_t$ in time, and $\mathbb Q^p(\mathcal T_h)$ is the space of degree-$p$, tensor-product, piecewise polynomials on $\mathcal T_h$.
Recall that $\dim V_N\sim N_x^dN_t(p-k)^d$ and
	\begin{equation*}
		\|u - u_N\|_{H^1(Q)} \lesssim \max\{h_x,h_t\}^p \sim \max\{N_x^{-p}, N_t^{-p}\}.
	\end{equation*}
	Then, the number of degrees of freedom (DOFs) required to achieve a given accuracy $\epsilon>0$ is of order $\epsilon^{-\frac{d+1}{p}}(p-k)^d$.
	Using splines with maximal regularity (i.e.\ $k = p - 1$), to get $H^1$ accuracy proportional to $\epsilon$, a linear system with $\epsilon^{-\frac{d+1}{p}}$ DOFs needs to be solved.

We compare this with the cost of a single time-step of an implicit method, using the spline space
$\mathbb Q^{p_{TS}}(\mathcal T_{h_{TS}}^x)\cap C^{p_{TS}-1}(\Omega)$
with polynomial degree $p_{TS}$, space mesh size $h_{TS}$, and  $N_{TS}=2\LI/h_{TS}$ elements in each direction.
For simplicity, we assume that the time-step does not introduce any error other than the approximation in space.
To achieve the same error $\epsilon$, one needs $N_{TS}^d = \epsilon^{-\frac{d}{p_{TS}}}$ DOFs.
Choosing
$$
p\sim \frac{d+1}d p_{TS}\qquad\text{and}\qquad
N_x \sim N_t \sim N_{TS}^{\frac{p_{TS}}{p}}\sim N_{TS}^{\frac{d}{d+1}},
$$
then both the space--time method and a single time-step of the implicit method have accuracy proportional to $\epsilon$ and require the solution of a linear system of size $\sim \epsilon^{-\frac{d+1}p}\sim\epsilon^{-\frac{d}{p_{TS}}}$.

We note that for Galerkin approximation of the wave equation with high-regularity splines, explicit and implicit time-stepping schemes give linear systems with the same size and sparsity pattern, as pointed out in \cite[sect.~4]{PavarinoZampieri2021}.
\end{remark}

\begin{remark}[Condition number bound]
	\label{remark:condNumber}
Using inverse estimates for polynomial spaces, we provide an upper bound on the condition number of the Galerkin matrix of \eqref{eq:Galerkin}.
	Let $V_N \subset V$ be a space of piecewise polynomials of degree at most $p$ on a mesh that is tensor-product of quasi-uniform meshes in $\Omega$ and $(0,T)$.
	The elements of $V_N$ must be in $C^1(Q)$ by Lemma~\ref{lemma:C1conformity}.
	Let $\{\varphi_j\}_{j=1,\ldots,N}$ be a basis scaled such that $\|v_h\|_{Q}^2 \sim h_x^{d}h_t \|\vect v\|_2^2$ for all $v_h=\sum_{j=1}^N v_j\varphi_j \in V_N$, $\vect v=(v_1,\ldots,v_N)\in\mathbb R^N$, and let  $\mat B_{ij} = b(\phi_j, \phi_i)$. First, observe that by \eqref{eq:L2V}
	\begin{equation*}
		\|v_h\|_V^2 \geq (2T^2)^{-1} \|v_h\|_Q \sim (2T^2)^{-1} h_x^dh_t \|\vect v\|_2^2\qquad \forall v_h\in V_N.
	\end{equation*}
	Moreover, using inverse and trace inequalities \cite[Lemma~4.5.3, Thm.~1.6.6]{BrennerScottRidgway2008},
	the definition \eqref{eq:norm_def} of the norm $\|\cdot\|_V$ gives
	\begin{align*} \|v_h\|_V^2 \lesssim \big( &
			h_t^{-2} + c^2 h_x^{-2} + c^4 T^2 h_x^{-4} + T^2 h_t^{-4} \\ & + T h_t^{-3} + c^2 T h_x^{-2}h_t^{-1} +T^{-1}h_t^{-1}
			+ \LI h_t^{-2}h_x^{-1} + c^2 \LI h_x^{-3} \big) h_x^dh_t \|\vect v\|_2^2
			\qquad \forall v_h\in V_N.
	\end{align*}
	Then, arguing as in the proof of \cite[Prop.~5.4]{Moiola2014}, using $h_x\le \LI$, $h_t\le T$ and Young's inequality, the condition number $\kappa_2(\mat B) := \|\mat B\|_2 \|\mat B^{-1}\|_2$ is bounded by
	\begin{align*}
		\kappa_2(\mat B) & \lesssim \frac{C_b}{\alpha_b} {2T^2} \Big(
			h_t^{-2} + c^2 h_x^{-2} + c^4 T^2 h_x^{-4} + T^2 h_t^{-4} \\ &\qquad \qquad
			+ T h_t^{-3} + c^2 T h_x^{-2}h_t^{-1} +T^{-1}h_t^{-1}
			+ \LI h_t^{-2}h_x^{-1} + c^2 \LI h_x^{-3} \Big) \lesssim \frac{C_b}{\alpha_b}
\bigg(\frac T{h_t}+\frac{\LI+cT}{h_x}\bigg)^4,
	\end{align*}
where $\alpha_b$ ($C_b$ respectively) is the coercivity (continuity respectively) constant of $b$.
	This shows that the condition number grows at most with order 4, when the mesh is refined in either one of the directions.
	The fourth power of the mesh size appears because of the presence of the term $\|\mathcal W v\|_Q$ in the norm $\|v\|_V$, which is needed to ensure the continuity of $b(\cdot,\cdot)$.
	Analogous results can be derived for unstructured, non-tensor-product, quasi-uniform, space--time meshes.
\end{remark}

\begin{remark}[Energy considerations]
\label{remark:EnergyDecay}
	Define the energy of any $u\in W$ at time $t\in[0,T]$ as
	\begin{equation*}
	\mathcal E(t; u) := \frac12 \big(\|\TimeDer u\|_{\Omega_t}^2 + c^2\|\Grad u\|_{\Omega_t}^2\big).
	\label{eq:Energy}
	\end{equation*}
	If $u$ solves \eqref{eq:imp_varprob} with $f = 0$ and $\gI = 0$, integrating by parts and imposing the boundary conditions shows that energy does not increase: for $0\le t_1<t_2\le T$
	\begin{equation*}
	\begin{split}
		\mathcal E(t_2; u) - \mathcal E(t_1; u) =
		\int_{t_1}^{t_2}\TimeDer \mathcal E(t; u) \dt =
		\int_{t_1}^{t_2}\int_{\GD} c^2 \TimeDer u \NormDer u \dS \dt =
		- \frac{c}\theta \int_{t_1}^{t_2}\int_\GD (\TimeDer u)^2 \dS \le 0.
	\end{split}
	\end{equation*}
	For a general $u\in W$, bound \eqref{eq:est1} in the proof of Theorem~\ref{thm:Existence} is uniform in the parameter $N$ and thus holds with $u$ in place of $u^N$.
	This is an upper bound on $\mathcal E(t; u)$ which depends only on $\|\oW u\|_Q$, $\|\NormDer u+(\theta c)^{-1}\TimeDer u\|_\SI$, $\|\Grad u\|_\OZ$ and $\|u\|_\OZ$ and is uniform in $t$.
	Moreover, the right-hand side of such bound can be bounded using the $V$ norm of $u$:
	\begin{equation*}
\mathcal E(t; u)\le
		e^1\max\Big\{\frac1{2T},\;\frac{c}{\LI}\Big(\theta+\frac1\theta\Big)\Big\}\|u\|_V^2
		\qquad\forall t\in[0,T].
	\end{equation*}
This allows to control the energy $\mathcal E(t;u-u_N)$ of the Galerkin error using the approximation properties of $V_N$.
We can also control the error of the energy: for any $w\in W$ such that $\|u-w\|_V\le\|u\|_V$ (e.g.\ $w=u_N$ the Galerkin solution for a sufficiently accurate space $V_N$)
\begin{align*}
|\mathcal E(t;u) - \mathcal E(t; w)|
&=\frac12\int_{\Omega\times\{t\}}
\Big(\TimeDer(u+w)\TimeDer(u-w)+c^2\nabla(u+w)\cdot\nabla(u-w)\Big)\dx\\
&\le \sqrt{\mathcal E(t;u+w)\;\mathcal E(t;u-w)}\\
&\le e^1\max\Big\{\frac1{2T},\;\frac{c}{\LI}\Big(\theta+\frac1\theta\Big)\Big\}
\;\|u+w\|_V \;\|u-w\|_V\\
&\le 3\,e^1\max\Big\{\frac1{2T},\;\frac{c}{\LI}\Big(\theta+\frac1\theta\Big)\Big\}
\;\|u\|_V \;\|u-w\|_V.
\end{align*}
\end{remark}

\section{Scattering problem}\label{sec:Scattering}
We now consider problem \eqref{eq:IBVP} with non-empty Dirichlet boundary~$\SD$.
We follow the approach of \S\ref{sec:interior_problem} modifying $b$ and $F$ to take into account the Dirichlet boundary condition on $\SD$, which will be imposed in a weak sense. A least-squares term involving time derivatives will appear in $\ell$ and $F_\ell$.
To ensure coercivity, the sign of the product $\vx\cdot\vn$ on $\SD$ has to be opposite to that on $\SI$, see Assumption~\ref{ass:StarSD} and Figure~\ref{fig:StarShape}.

We assume that the initial and boundary data $u_0\in H^1(\OZ)$ and $\gD\in H^1(\SD)$ are compatible, in the sense that their traces on $\partial\OZ$ coincide.
If this were not the case, we could not expect the IBVP solution $u$ to belong even to $H^1(Q)$: if e.g.\ $u_0=1$ and $\gI=0$, the trace of $u$ on the Lipschitz boundary $\partial Q$ would not belong to $H^{1/2}(\partial Q)$, as it can be deduced from \cite[sect.~5.2]{Triebel2002}, so $u$ would not be in $H^1(Q)$ by the classical trace theorem \cite[Thm.~3.37]{McLean00}.

\subsection{Bilinear and linear form definition}
To define the variational problem we start again from \eqref{eq:integrated_identity}.
Recalling that $\Sigma = \overline{\SI \cup \SD}$, we separate the integral term on $\Sigma$ in a term on $\SI$, treated as in \S\ref{sec:interior_problem}, and a new one on $\SD$.
To split $m$ as in \eqref{eq:assum:boundary_identity} and define the penalty terms as in \eqref{eq:assum:ell=L}, we observe that,
whenever the Dirichlet boundary condition $u = \gD$ holds on $\SD$, then all the terms in the integral over $\SD$ are known, except for $\NormDer u$. Recalling the definition \eqref{eq:multiplier_def} of $\operator M$, this suggests to split the integral over $\Sigma$ in \eqref{eq:integrated_identity} in the following way:
\begin{align*}
&\int_\Sigma \big[c^2 \operator{M} u\NormDer{v} + c^2\NormDer{u} \operator{M} v
				+ \xi \vect{x} \cdot \vect{n} (- \TimeDer{u} \TimeDer{v}
				+ c^2 \,\Grad u \cdot \Grad v) \big] \dS\dt
= I^r_{\SI} + I^r_{\SD} + I^s_{\SI} + I^s_{\SD},
\\
I^r_{\SI} & := \int_{\SI} \big[c^2 \oM u\NormDer{v} - c\theta^{-1} \TimeDer u \oM v - \xi \vect{x} \cdot \vect{n} (\TimeDer{u} \TimeDer{v})+ c^2 \xi \vect{x} \cdot \vect{n} (\Grad u \cdot \Grad v)  \big]\dS\dt,\\
I^r_{\SD} & := \int_{\SD} c^2  \NormDer u \oM v \dS \dt,\\
I^s_{\SI} & := \int_{\SI} \big[c\theta^{-1}  \TimeDer u\oM v + c^2  \NormDer u\oM v \big]\dS \dt, \\
I^s_{\SD} & := \int_{\SD} \big[- c^2 \xi(\vx \cdot \GradGD u)\NormDer v + c^2 \beta (t-T^*) \TimeDer u \NormDer v
+ \xi \vect{x} \cdot \vect{n}(- \TimeDer u \TimeDer v+c^2 \GradGD u \cdot \GradGD v )\big] \dS \dt,
\end{align*}
where $I^r_{\bullet}$ and $I^s_{\bullet}$ are the terms to be included in the bilinear form $r$ and $s$, respectively.

To define the bilinear form for the scattering problem, we modify $r$, $s$ and $\ell$ in \eqref{eq:r_def}, \eqref{eq:s_def} and \eqref{eq:ell_def} to include
the terms on $\SD$.
We append the symbol ``$_\star$'' to all the bilinear forms and functionals of the scattering problem.
We set
\begin{align}
\nonumber r_{\star}(u, v) :=& \; r(u, v) - I_\SD^r, \\
\label{eq:rslScatter} s_{\star}(u, v) :=& \; s(u, v) - I_\SD^s, \\
\nonumber \ell_\star (u, v) :=&\; \ell(u, v) + A_\SD \LD \int_\SD \TimeDer u \TimeDer v \dS \dt,
\end{align}
for a parameter $A_\SD>0$, and where $\LD=\max\{|\vx|:\vx\in \GD\}$.
Recall that the integral over $\Sigma$ in \eqref{eq:integrated_identity} has a negative sign in front, and therefore the term $I_\SD^r$ ($I_\SD^s$, respectively) has to be included in $r_\star$ ($s_\star$, respectively) with a negative sign as well.

Recalling the definitions of $b$ and $F$ in \eqref{eq:def:bilin} and \eqref{eq:def:lin}, $b_\star$ and $F_\star$ are defined as
\begin{align}
b_\star(u,v)  := &\;\int_Q \oM u \oW v \dx \dt - r_\star(u,v) + \ell_\star(u, v) \nonumber\\
 = &\;\int_Q \oM u \oW v \dx\dt - r(u,v) + I_\SD^r + \ell (u, v) + A_\SD \LD \int_\SD \TimeDer u \TimeDer v \dS \dt
\label{eq:bScatter}\\
= &\; b(u, v) + \int_{\SD} \big(c^2 \NormDer u \oM v + A_\SD \LD \TimeDer u \TimeDer v \big)\dS \dt,\nonumber
\\
F_\star(v) := &\;
F(v) - I_\SD^s(v) + A_\SD \LD \int_\SD \TimeDer u \TimeDer v \dS \dt   \nonumber\\
\label{eq:FScatter}
= &\; F(v) + \int_{\SD} \big[c^2 \xi(\vx \cdot \GradGD \gD)\NormDer v - c^2 \beta (t-T^*) \TimeDer \gD \NormDer v \\
& \hspace{20mm}+ \xi \vect{x} \cdot \vect{n}( \TimeDer \gD \TimeDer v-c^2 \GradGD \gD \cdot \GradGD v )
+ A_\SD \LD \TimeDer \gD \TimeDer v \big]\dS \dt ,\nonumber
\end{align}
where $\operator{M}$ is defined in \eqref{eq:multiplier_def}
and $I_\SD^s(v)$ is the same as $I_\SD^s$ defined above except for $u$ is replaced by $\gD$.
The forms $b_\star$ and $F_\star$ are determined by the positive parameters
$A_Q$, $A_\OZ$, $A_\SD$, $\beta$, $\xi$, $T^*$.

\subsection{Norm, function spaces and variational problems}
Recalling the definition \eqref{eq:norm_def} of $\|\cdot\|_V$, we define a norm that controls all terms in $b_\star$ and $F_\star$: for $v \in C^\infty(\overline{Q})$,
\begin{equation}
\label{eq:norm_scattering}
\|v\|_\Vs^2 :=  \|v\|_V^2 + \LD \|\TimeDer{v}\|_{\SD}^2 + c^2 \LD \|\Grad v\|_{\SD}^2 .
\end{equation}
As in \eqref{eq:L2V}, $\|v\|_Q^2\le 2T^2\|v\|_\Vs^2$, so $\|\cdot\|_\Vs$ is indeed a norm.
As in \S\ref{sec:spaces}, we define two Hilbert spaces:
\begin{equation*}
\Vs := \overline{C^\infty(\overline{Q})}^{\|\cdot\|_\Vs}, \qquad
\Ws := \{v \in H^1(Q): \|v\|_\Vs < \infty\}.
\end{equation*}
The same considerations of Remark~\ref{remark:V=W?} apply in this case as well, in particular $H^2(Q) \subset \Vs \subset \Ws$, but it is not clear whether $\Vs = \Ws$.
We can thus write two, possibly equivalent, variational formulations of the scattering problem~\eqref{eq:IBVP}:

\begin{equation}
	\label{eq:vpVScatter}
	\text{Find}\; u \in \Vs\;\text{such that}\qquad b_\star(u,v) = F_\star(v) \qquad \forall v \in \Vs,
\end{equation}
and
\begin{equation}
	\label{eq:vpWScatter}
	\text{Find}\;u\in\Ws\;\text{such that}\qquad b_\star(u,v) = F_\star(v)\qquad \forall v\in\Ws,
\end{equation}
where $b_\star$ and $F_\star$ are defined in \eqref{eq:bScatter} and \eqref{eq:FScatter}.
Clearly, when $\GD=\emptyset$, problems \eqref{eq:vpVScatter} and \eqref{eq:vpWScatter} coincide with  \eqref{eq:vpV} and \eqref{eq:vpW}, respectively.

\subsection{Coercivity and continuity}
We prove the coercivity and the continuity of $b_\star$ with respect to the norm $\|\cdot\|_\Vs$, slightly adapting the proofs of \S\ref{sec:CC}.
The well-posedness of problems \eqref{eq:vpVScatter} and \eqref{eq:vpWScatter} follows.

To ensure the coercivity of $b_\star$, both boundaries $\GI$ and $\GD$ have to satisfy (opposite) conditions on the sign of $\vx\cdot\vn$ (recall that the unit normal $\vn$ on $\partial\Omega=\overline{\GI\cup\GD}$ points outwards of $\Omega$).
\begin{assumption}\label{ass:StarSD}
There exists $\deltaD > 0$, such that $-\vx\cdot\vn\geq\deltaD\LD$ for almost every $\vx\in\GD$.
\end{assumption}
If $\GI$ and $\GD$ are disjoint, Assumptions~\ref{ass:StarS} and \ref{ass:StarSD} coincide with the requirement that both $\GI$ and $\GD$ are boundaries of domains that are star-shaped with respect to concentric balls.
However, the case $\overline\GI\cap\overline\GD\ne\emptyset$ is allowed.
Two examples are shown in Figure~\ref{fig:StarShape}.

\begin{figure}[htb]
\begin{center}
\begin{tikzpicture}[scale=1.5]
\fill[lightgray](-.5,0)--(-.5,1.3)--(.5,1.3)--(.5,0);
\fill[lightgray] (0,0) ellipse (2 and .7);
\draw[dashed,thick,fill=white](.2,-.4)--(.3,.5)--(-.7,.1)--(.2,-.4);
\draw(2,.3)node{$\Gamma_I$};
\draw(-.4,-.2)node{$\Gamma_D$};
\draw(1,0)node{$\Omega$};
\fill(0,0)circle(.04)node[anchor=west]{$\vect0$};
\draw[->,thick](-.2,.3)--(-.2+0.08,.3-0.2)node[anchor=east]{$\vn$};
\draw[->,thick](.5,1.1)--(.75,1.1)node[anchor=north]{$\vn$};
\end{tikzpicture}
\hspace{20mm}
\begin{tikzpicture}[scale=1.5]
\fill[lightgray] (0,0) ellipse (1.5 and 1);
\fill[white] (-1.5,-.5)--(-.4,-.5)--(-.4,0)--(-.2,0)arc (180:0:.2)--(.4,0)--(.4,-.5)--(1.5,-.5)--(1.5,-1)--(-1.5,-1);
\fill(0,-.9)circle(.04)node[anchor=west]{$\vect0$};
\draw[->,thick](1,-.5)--(1,-.75)node[anchor=west]{$\vn$};
\draw[->,thick](1.5,0)--(1.75,0)node[anchor=north]{$\vn$};
\draw(-1.2,.8)node{$\Gamma_I$};
\draw(-.8,-.7)node{$\Gamma_D$};
\draw(1,0)node{$\Omega$};
\draw[dashed,thick](-1.3,-.5)--(-.4,-.5)--(-.4,0)--(-.2,0)arc (180:0:.2)--(.4,0)--(.4,-.5)--(1.3,-.5);
\end{tikzpicture}
\end{center}
\caption{Two space domains $\Omega\subset\mathbb R^2$ (in grey) with $\GD\ne\emptyset$ and that satisfy Assumptions~\ref{ass:StarS} and \ref{ass:StarSD} ($\vx\cdot\vn$ uniformly positive on $\GI$ and uniformly negative on $\GD$).
The dashed line represents the Dirichlet boundary $\GD$.
Left domain: the impedance and the Dirichlet boundaries are disjoint and they bound domains that are star-shaped with respect to the origin $\mathbf 0$.
Right domain: the impedance and the Dirichlet boundaries are not disjoint.
}
\label{fig:StarShape}
\end{figure}
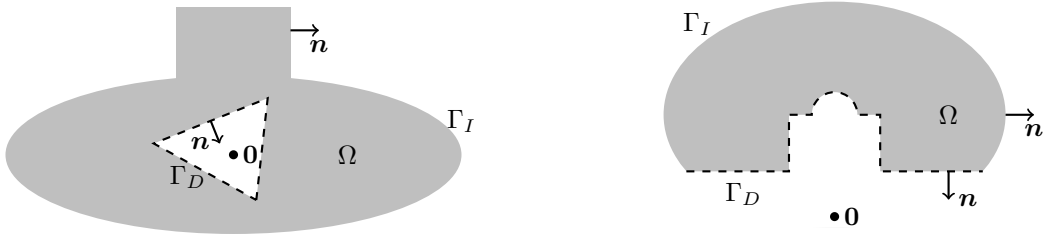

\begin{theorem}[Coercivity of $b_\star$]	\label{thm:coercBScat}
Under Assumptions~\ref{ass:StarS} and \ref{ass:StarSD} on $\Omega$, let $A_Q > 0$, $A_\OZ > 0$, $A_\SD \geq \xi > 0$, and let $\beta$ satisfy the lower bound \eqref{eq:coefficient_coercivity_cond}.
Then for all $u \in \Ws$
\begin{equation}\label{eq:CoercivityScat}
b_\star(u,u) \geq \alpha_{b_\star}\|u\|_\Vs^2 \qquad \text{with}\qquad
\alpha_{b_\star}:= \min\left\{\frac{\xi\deltaI}{4},\;\frac{\xi\deltaD}{2},\; A_Q,\; A_\OZ \right\}.
\end{equation}
\end{theorem}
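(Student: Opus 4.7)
The plan is to follow closely the proof of Theorem~\ref{thm:coerc_b} and isolate the extra contributions on $\SD$. Starting from identity~\eqref{eq:abstract_coercivity_condition} applied to the scattering forms and using definitions~\eqref{eq:rslScatter}, I would first rewrite
\[
b_\star(u,u) = b(u,u) + \tfrac12\big(I_\SD^r(u,u) - I_\SD^s(u,u)\big) + A_\SD \LD \|\TimeDer u\|_\SD^2,
\]
so that Theorem~\ref{thm:coerc_b} immediately gives $b(u,u)\ge \alpha_b\|u\|_V^2$ with $\alpha_b=\min\{\xi\deltaI/4, A_Q, A_\OZ\}$ (its hypotheses being satisfied by assumption). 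It then only remains to extract, from the two Dirichlet terms, a lower bound of the form $\alpha(\LD\|\TimeDer u\|_\SD^2 + c^2\LD\|\Grad u\|_\SD^2)$, which combined with the $V$-bound will yield the $\Vs$-coercivity.

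The key algebraic step is to set $v=u$ in $I_\SD^r$ and $I_\SD^s$, decompose $\vx\cdot\Grad u = \vx\cdot\GradGD u + (\vx\cdot\vn)\NormDer u$ on $\SD$, and observe that the cross products $c^2\xi(\vx\cdot\GradGD u)\NormDer u$ and $c^2\beta(t-T^*)(\NormDer u)\TimeDer u$ appear with opposite signs in the two integrals and cancel in the difference. Using additionally the Pythagorean splitting $|\Grad u|^2 = |\GradGD u|^2 + (\NormDer u)^2$ on $\SD$, I expect the clean identity
\[
I_\SD^r(u,u) - I_\SD^s(u,u) = \xi \int_\SD (\vx\cdot\vn)\big[(\TimeDer u)^2 - c^2 |\Grad u|^2\big]\dS\dt.
\]
By Assumption~\ref{ass:StarSD}, $-\vx\cdot\vn\ge\deltaD\LD$ on $\GD$ while $|\vx\cdot\vn|\le\LD$, so the $|\Grad u|^2$ term acquires the favourable sign (bounded below by $c^2\xi\deltaD\LD\|\Grad u\|_\SD^2$) and the $(\TimeDer u)^2$ term the unfavourable one (bounded below by $-\xi\LD\|\TimeDer u\|_\SD^2$). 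This sign mismatch is precisely the reason why the penalty $A_\SD\LD\|\TimeDer u\|_\SD^2$ was introduced in $\ell_\star$: the hypothesis $A_\SD\ge\xi$ makes it absorb the bad term and still leave $\tfrac12\xi\LD\|\TimeDer u\|_\SD^2$ behind.

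Putting the pieces together yields
\[
b_\star(u,u) \ge \alpha_b\|u\|_V^2 + \tfrac12\xi\LD\|\TimeDer u\|_\SD^2 + \tfrac12 c^2\xi\deltaD\LD\|\Grad u\|_\SD^2,
\]
and, using $\deltaD\le 1$ so that $\xi/2\ge\xi\deltaD/2$, the right-hand side is bounded below by $\alpha_{b_\star}\|u\|_\Vs^2$ with $\alpha_{b_\star}$ as in~\eqref{eq:CoercivityScat}. The main obstacle is the algebraic simplification leading to the displayed identity for $I_\SD^r - I_\SD^s$: one must track carefully that, after splitting $\Grad u$ into tangential and normal parts on $\SD$, both the $(\vx\cdot\GradGD u)\NormDer u$ and the $\beta(t-T^*)(\NormDer u)\TimeDer u$ contributions cancel exactly, and that the remaining $(\vx\cdot\vn)(\NormDer u)^2$ recombines with $(\vx\cdot\vn)|\GradGD u|^2$ via the Pythagorean decomposition to reconstruct $(\vx\cdot\vn)|\Grad u|^2$. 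Once this identity is secured, the rest is the same "weighted Young plus star-shapedness" pattern already exploited for $\SI$ in the impedance case.
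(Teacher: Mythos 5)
Your proof is correct and takes essentially the same route as the paper's: the identical decomposition $b_\star(u,u)=b(u,u)+\tfrac12\bigl(I_\SD^r(u,u)-I_\SD^s(u,u)\bigr)+A_\SD\LD\|\TimeDer u\|_\SD^2$, the same identity $I_\SD^r(u,u)-I_\SD^s(u,u)=\xi\int_\SD(\vx\cdot\vn)\bigl[(\TimeDer u)^2-c^2|\Grad u|^2\bigr]\dS\dt$ via the tangential--normal splitting and Pythagoras, and the same use of Assumption~\ref{ass:StarSD}, $A_\SD\ge\xi$ and $\deltaD\le1$ to reach the stated constant (the paper likewise notes that Theorem~\ref{thm:coerc_b} applies verbatim to give $b(u,u)\ge\alpha_b\|u\|_V^2$ since its proof involves no integration by parts). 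One cosmetic remark: the cross terms $(\vx\cdot\GradGD u)\NormDer u$ and $\beta(t-T^*)\TimeDer u\,\NormDer u$ carry the \emph{same} sign in $I_\SD^r(u,u)$ and $I_\SD^s(u,u)$, not opposite signs, which is precisely why they cancel in the difference; the identity you state is nevertheless correct.
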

\begin{proof}
For all $u \in \Ws$, we need to provide a lower bound for
\begin{equation*}
2 b_\star(u,u) = 2\ell_\star(u, u) - r_\star(u,u) + s_\star(u,u).
\end{equation*}
Using the definitions \eqref{eq:rslScatter}, the relation \eqref{eq:abstract_coercivity_condition}, and that $|\nabla u|^2=|\GradGD u|^2+|\NormDer u|^2$ on $\SD$,
we obtain
\begin{align*}
2 b_\star(u,u) & =
2\ell(u,u)- r(u, u) + s(u, u) +2A_\SD\LD\int_\SD |\TimeDer u|^2\dS\dt + I_\SD^r(u,u)- I_\SD^s(u,u)\\
&=2b(u,u) +  \int_\SD \big[2A_\SD \LD|\TimeDer u|^2 + c^2 \NormDer u \oM u \big]\dS \dt\\
&\quad + \int_{\SD} \big[c^2 \xi(\vx \cdot \GradGD u)\NormDer u - c^2 \beta (t-T^*) \TimeDer u \NormDer u
+ \xi \vect{x} \cdot \vect{n}(|\TimeDer u|^2 - c^2 |\GradGD u|^2)\big] \dS \dt \\
& = 2b(u,u) + 2A_\SD \LD \|\TimeDer u\|^2_\SD
+ \int_\SD \xi \vx\cdot\vn \big(|\TimeDer u|^2-c^2|\Grad u|^2 \big) \dS \dt,
\end{align*}
where $I_\SD^r(u, u)$ and $I_\SD^s(u,u)$ are defined as $I_\SD^r$ and $I_\SD^s$ respectively, except for $v=u$.
Using Assumption~\ref{ass:StarSD}, the fact that $A_\SD\geq\xi$ and that $\deltaD\le1$ observe that
\begin{align*}
2A_\SD \LD \|\TimeDer u\|^2_\SD + \int_\SD \xi \vx\cdot\vn \big(|\TimeDer u|^2-c^2|\Grad u|^2 \big) \dS \dt & \geq (2A_\SD - \xi)\LD \|\TimeDer u \|^2_\SD + c^2 \xi\deltaD\LD\|\Grad u\|^2_\SD \\
& \geq \xi \deltaD\big(\LD \|\TimeDer u \|^2_\SD + c^2\LD\|\Grad u\|^2_\SD\big).
\end{align*}
The term $2b(u,u)$ is bounded below by $2\alpha_b\|u\|_V^2$ by \eqref{eq:coercivity_condition}, since the proof of Theorem~\ref{thm:coerc_b} does not require integrations by parts and so it is applicable also in the presence of the Dirichlet boundary $\SD$.
Then, recalling the relation \eqref{eq:norm_scattering} between the norms $\|\cdot\|_V$ and $\|\cdot\|_\Vs$, assertion \eqref{eq:CoercivityScat} follows at once.
\end{proof}

\begin{lemma}[Continuity of $b_\star$ and $F_\star$]
The bilinear form $b_\star$ and the linear functional $F_\star$ are continuous in $\Ws$ with respect to the norm $\|\cdot\|_\Vs$ in \eqref{eq:norm_scattering}:
$$
|b_\star(u,v)|\le C_{b_\star} \|u\|_\Vs\|v\|_\Vs,
\qquad
|F_\star (v)|\leq C_{F_\star} \|(f, \gI, \gD, u_0, u_1)\|_{\mathtt{d}_\star} \|v\|_\Vs
\qquad
\forall u,v\in \Ws,
$$
where
\begin{equation*}C_{b_\star} := \max
\begin{cases}
C_b,\\
	\sqrt3\big(\beta  (\nu-1) \frac{c T}{\LD} + \xi\big),  \\
	\sqrt3 A_\SD,
\end{cases}
\qquad
C_{F_\star}:=\max \begin{cases}C_F,\\
	\xi+A_\SD,\\
	2\xi,\\
	\beta (\nu-1) \frac{cT}{\LD},
\end{cases}
\end{equation*}
and, recalling the data norm $\|\cdot\|_\mathtt{d}$ in \eqref{eq:dnorm},
\begin{align*}
\|(f, \gI, \gD, u_0, u_1)\|^2_{\mathtt d_\star} := \|(f, \gI, u_0, u_1)\|^2_\mathtt{d} + \LD \|\TimeDer \gD\|^2_\SD + c^2\LD \|\GradGD \gD\|^2_\SD.
\end{align*}
\end{lemma}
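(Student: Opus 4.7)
The plan is to follow closely the matrix-representation argument of Proposition~\ref{prop:continuity_b}, now with the Dirichlet boundary $\SD$ contributing an extra block. Exploiting the decomposition
\[
b_\star(u,v)=b(u,v)+\int_\SD\bigl(c^2\NormDer u\,\oM v+A_\SD\LD\TimeDer u\TimeDer v\bigr)\dS\dt
\]
and the analogous one for $F_\star-F$ read off from \eqref{eq:FScatter}, together with the elementary inequality $\|w\|_V\le\|w\|_\Vs$ for $w\in\Ws$, the bounds of Proposition~\ref{prop:continuity_b} immediately account for the $C_b$- and $C_F$-rows of the stated maxima, so only the new $\SD$-integrals need to be estimated.

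For the bilinear part I would extend the vector $\vect m(w)\in\R^{10}$ from the proof of Proposition~\ref{prop:continuity_b} to $\vect m_\star(w)\in\R^{12}$ by appending the two entries $\LD^{1/2}\|\TimeDer w\|_\SD$ and $c\LD^{1/2}\|\Grad w\|_\SD$, so that $\|\vect m_\star(w)\|_2=\|w\|_\Vs$. Using $\oM v=-\xi\vx\cdot\Grad v+\beta(t-T^*)\TimeDer v$, the pointwise estimates $|\vx|\le\LD$ and $|\NormDer u|\le|\Grad u|$ on $\SD$, and Cauchy--Schwarz in $L^2(\SD)$, each of the three new integrands becomes a bilinear expression in the last two components of $\vect m_\star(u)$ and $\vect m_\star(v)$. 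These assemble into an extra $2\times 2$ diagonal block $\mat M_\SD$ whose $\|\cdot\|_1$-norm delivers the dimensionless quantities $\beta(\nu-1)cT/\LD+\xi$ and $A_\SD$; applying $\|\mat M\|_2\le\sqrt{3}\|\mat M\|_1$ uniformly to every block of the resulting block-diagonal matrix and taking the maximum over blocks gives $C_{b_\star}$.

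For the linear functional I would analogously extend the data vector $\vect F_F\in\R^{10}$ of Proposition~\ref{prop:continuity_b} by appending entries corresponding to the new $\SD$-integrals in \eqref{eq:FScatter}: a multiple of $c\LD^{1/2}\|\GradGD\gD\|_\SD$ (produced by the $\xi\,\vx\cdot\GradGD\gD$ and $\xi(\vx\cdot\vn)\GradGD\gD\cdot\GradGD v$ terms) and a multiple of $\LD^{1/2}\|\TimeDer\gD\|_\SD$ (produced by the $\beta(t-T^*)\TimeDer\gD$, $\xi(\vx\cdot\vn)\TimeDer\gD$, and $A_\SD\LD\TimeDer\gD$ terms). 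Each of these is precisely a summand of $\|(f,\gI,\gD,u_0,u_1)\|_{\mathtt d_\star}^2$, so Cauchy--Schwarz in $\R^n$ followed by a row-wise maximum yields the stated $C_{F_\star}$. The main obstacle is only the careful bookkeeping of the dimensionless coefficients on $\SD$ (matching the exact combinatorics $\beta(\nu-1)$, $2\xi$, etc.\ in the statement); no analytic step beyond Cauchy--Schwarz and the trivial inequality $|\NormDer v|\le|\Grad v|$ is needed, since all required regularity has already been absorbed into the definition of $\Ws$.
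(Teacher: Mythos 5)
Your proposal is correct and follows essentially the same route as the paper: the authors likewise reduce everything to Proposition~\ref{prop:continuity_b} by appending the two entries $\LD^{1/2}\|\TimeDer w\|_\SD$ and $c\LD^{1/2}\|\Grad w\|_\SD$ to $\vect m(w)$, adding a $2\times2$ block $\mat M_\SI$-style matrix $\mat M_\SD$ with column sums $A_\SD$ and $\beta(\nu-1)\frac{cT}{\LD}+\xi$, and extending $\vect F_F$ with the corresponding $\gD$-data entries. The only cosmetic difference is that the paper groups the new $F_\star$-terms by the test-function component they pair with (so the $\beta(t-T^*)\TimeDer\gD\,\NormDer v$ contribution lands in the entry matched to $c\LD^{1/2}\|\Grad v\|_\SD$), whereas you group them by data norm; both bookkeepings yield the stated constants.
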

\begin{proof}
The proof follows exactly as that of Proposition~\ref{prop:continuity_b}, after extending the vectors and the block-diagonal matrix as
\begin{align*}
\vect{m}_\star(v)&:=\Big(\vect{m}(v),\quad L_D^{\frac12}\|\TimeDer v\|_\SD,\quad c L_D^{\frac12}\|\nabla v\|_\SD\Big)\in \mathbb{R}^{12},\\
\mathbf F_{F_\star}&:=\bigg(\mathbf F_F,\;
(\xi +A_\SD)L_D^{\frac12}\|\TimeDer \gD\|_\SD,\;
\Big(\frac{c^2T^2}{L_D}\beta^2(\nu-1)^2\|\TimeDer\gD\|_\SD^2 + 4\xi^2c^2L_D\|\GradGD\gD\|_\SD^2\Big)^{\frac12}\bigg)\in \mathbb{R}^{12},\\
\mat{M}_\star&:=
\left(\begin{matrix}\mat{M} &0 \\ 0 & \mat{M}_\SD\end{matrix}\right),\qquad
\mat{M}_\SD :=
\left(\begin{matrix}A_\SD & \beta (\nu-1) \frac{cT}{\LD}\\0 & \xi\end{matrix}\right),
\end{align*}
so that
$$
\|\vect{m}_\star(v)\|_2=\|v\|_\Vs, \qquad
|b_\star(u,v)|\le |\vect{m}_\star(v)^\top \mat{M}_\star\vect{m}_\star(u)|,\quad \text{and} \quad
|F_\star(v)|\le \mathbf F_{F_\star}\cdot\vect{m}_\star(v)
\quad \forall u,v\in \Ws.
$$
\end{proof}

\begin{corollary}
Assume that the hypotheses of Theorem~\ref{thm:coercBScat} hold. Then, from Lax--Milgram theorem, it follows that the solution $u$ to problem \eqref{eq:vpWScatter} satisfies the stability bound
\begin{align*}
		\|u\|_\Vs \leq C_{F_\star}
		 \max\left\{\frac1{A_Q},\;\frac1{A_\OZ},\;\frac2{\xi\deltaD},\;\frac4{\xi\deltaI}\right\}\|(f, \gI,\gD,u_0,u_1)\|_{\mathtt{d}_\star}.
	\end{align*}
	The bounding constant scales with the same powers of the parameters as that in Theorem~\ref{thm:Existence} and deteriorates for either $\deltaI\to0$ or $\deltaD\to0$.
\end{corollary}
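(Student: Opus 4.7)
The plan is to derive this stability bound as an immediate consequence of the coercivity of $b_\star$ (Theorem~\ref{thm:coercBScat}) and the continuity of the linear functional $F_\star$ (the preceding lemma), both of which have already been proved with explicit constants. The Lax--Milgram theorem itself is invoked more as a framing device than as a tool we need to reapply, since the unique solvability has already been asserted; what remains is a one-line energy-type estimate.

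The first step is to test the variational equation \eqref{eq:vpWScatter} with $v = u$, yielding $b_\star(u,u) = F_\star(u)$. Applying Theorem~\ref{thm:coercBScat} to the left-hand side gives $\alpha_{b_\star} \|u\|_\Vs^2 \le b_\star(u,u)$, while the continuity of $F_\star$ proved in the preceding lemma gives $F_\star(u) \le C_{F_\star} \|(f, \gI, \gD, u_0, u_1)\|_{\mathtt d_\star} \|u\|_\Vs$. Assuming $\|u\|_\Vs > 0$ (otherwise the bound is trivial), dividing through by $\|u\|_\Vs$ yields
\begin{equation*}
\|u\|_\Vs \le \frac{C_{F_\star}}{\alpha_{b_\star}} \|(f, \gI, \gD, u_0, u_1)\|_{\mathtt d_\star}.
\end{equation*}

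The final step is to rewrite $1/\alpha_{b_\star}$ in the form stated. Since $\alpha_{b_\star} = \min\{\xi\deltaI/4,\; \xi\deltaD/2,\; A_Q,\; A_\OZ\}$, one has
\begin{equation*}
\frac{1}{\alpha_{b_\star}} = \max\left\{\frac{4}{\xi\deltaI},\; \frac{2}{\xi\deltaD},\; \frac{1}{A_Q},\; \frac{1}{A_\OZ}\right\},
\end{equation*}
which matches the constant in the stated bound. The concluding remarks on the scaling with $\deltaI, \deltaD \to 0$ follow by inspection: the factors $4/(\xi\deltaI)$ and $2/(\xi\deltaD)$ blow up precisely when the star-shapedness assumptions degenerate, while the dependence on $c, T, \LI, \LD, \theta$ entering through $C_{F_\star}$ can be compared termwise with the bound in the proof of Theorem~\ref{thm:Existence} (the estimate \eqref{eq:est1}) to confirm the same powers.

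There is essentially no technical obstacle here, since the heavy lifting has already been done in Theorem~\ref{thm:coercBScat} and the continuity lemma; the only mildly delicate point is bookkeeping to verify that the scaling comparison with Theorem~\ref{thm:Existence} is accurate, which requires noting that the data norm $\|\cdot\|_{\mathtt d_\star}$ contains the same $T$-, $c$-, $\LI$-, $\LD$-weights as the right-hand side of \eqref{eq:est1} (extended for the Dirichlet datum $\gD$), so that no hidden dependence is lost in the translation.
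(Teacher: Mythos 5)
Your proof is correct and follows exactly the route the paper intends: the corollary is the standard Lax--Milgram stability estimate obtained by testing with $v=u$, combining the coercivity constant $\alpha_{b_\star}$ from Theorem~\ref{thm:coercBScat} with the continuity bound on $F_\star$ from the preceding lemma, and observing that $1/\alpha_{b_\star}$ equals the stated maximum. No gaps.
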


The considerations made in \S\ref{sec:NumImplications} on the numerical aspects of the proposed formulation, such as quasi-optimality, conforming discretisations, conditioning and number of DOFs, immediately extend to the scattering problems \eqref{eq:vpVScatter} and \eqref{eq:vpWScatter}.

\section{Numerical experiments}
\label{sec:numExperiments}

We report some numerical results obtained from a simple spline discretisation of formulation \eqref{eq:vpV}.
We focus in particular on the choice of the parameters in the variational formulation, their robustness, the optimality of the convergence rates, the approximation of non-smooth solutions, the sharpness of the theoretical quasi-optimality bounds, the conservation of energy.
We consider the following simplified setting:
\begin{itemize}
\item $\GD=\emptyset$, i.e., the impedance cavity problem;
\item the space dimension is $d=1$, the space domain is $\Omega=(-1,1)$, and the final time $T=1$;
\item we use $N_x + 1$ equispaced nodes $x_j = -1 + j h_x$ in space, $j = 0, \dots, N_x$, and $N_t + 1$ equispaced nodes $t_i = i h_t$ in time, $i = 0, \dots, N_t$, with $h_x=2/N_x$ and $h_t=1/N_t$;
\item we let the discrete space $V_h$ be the tensor-product Hermite element space in $Q=(-1,1)\times(0,1)$ (i.e., the Bogner--Fox--Schmit element space in space--time).
More explicitly, the discrete space is
\begin{equation*}
V_h = \big\{v \in C^1(Q),\quad
	v|_{(x_j, x_{j+1})\times(t_i, t_{i+1})} \in \mathbb{Q}^3,\quad j = 0, \dots, N_x-1, \quad i = 0, \dots, N_t-1 \big\},
\end{equation*}
where $\mathbb Q^3$ is the space of polynomials of degree at most 3 in each variable;

\item we choose as basis functions the products $\varphi_{j,i}(x,t)=v_j(x)v_i(t)$,
for $j=0, \dots, 2N_x + 2$ and $ i=0, \dots, 2N_t + 2$, where $v_j$ and $v_i$ are the cubic Hermite basis functions of \cite[Ch.~IV, p.~48]{deBoor2001}.
Each basis function is supported in (at most) four elements of the space--time mesh, and is normalised so that one of $\varphi_{j,i},\partial_x\varphi_{j,i},\partial_t\varphi_{j,i},\partial_x\partial_t\varphi_{j,i}$ has value 1 at the mesh node at the centre of its support and the other three vanish at the same point.
For simplicity, let $N:= \dim(V_h) = (2N_t + 2)(2N_x + 2)$.
\end{itemize}
The method was implemented in Matlab R2023b, the linear systems were solved with the backslash direct solver, and all experiments were run on a laptop.\footnote{The code developed is available on \url{https://github.com/pbignardi/CoerciveWaveTests}.}

More extensive numerical experiments in higher space dimensions, with more general spline spaces, more exhaustive parameter sensitivity analysis, matrix compression, and comparisons with other methods have been implemented and will be subject of a separate report.

The Galerkin discretisation of formulation \eqref{eq:vpV} is
\begin{equation}
	\text{Find}\;u_h\in V_h \quad \text{such that}\quad b(u_h, v_h) = F(v_h) \qquad \forall v_h \in V_h,
	\label{eq:hGalerkin}
\end{equation}
which in matrix form reads
\begin{equation}
	\text{Find}\;\vect U\in\R^N \quad \text{such that}\quad \mat B \vect U = \vect{F}.
	\label{eq:matGalerkin}
\end{equation}

\let\prestretch\arraystretch
\renewcommand{\arraystretch}{1.30}
\begin{table}[htb]
	\begin{subtable}[t]{.5\linewidth}
	\begin{tabular}{|c|p{.85\linewidth}|}
		\hline
		\multirow{7}{*}{\rotatebox{90}{\textsc{Problem 1}}} & $c=1$ \\
		& $\theta=1$\\\cline{2-2}
& $f(x, t) =  (2\cos2t + \pi^2\sin^2 t)\cos\pi x+2\cos2t$ \\
		& $\gI(\pm1, t) = 0$\\
		& $u_0(x) = 0$ \\
		& $u_1(x) = 0$ \\\cline{2-2}
		& $u(x, t) = \sin^2t\;(\cos\pi x + 1)$ \\\hline
	\end{tabular}
	\caption{IBVP with homogeneous boundary and initial conditions.}
	\label{tab:P1}
	\end{subtable}\hfill
	\begin{subtable}[t]{.5\linewidth}
	\begin{tabular}{|c|p{.85\linewidth}|}
		\hline
		\multirow{7}{*}{\rotatebox{90}{\textsc{Problem 2}}} & $c=2$ \\
		& $\theta=10$\\\cline{2-2}
		& $f(x, t) = 0$ \\
& $\gI(-1, t) =
		\frac{-(\theta+1)^2 w'(-1-ct)+(\theta-1)^2w'(3-ct)}{\theta(\theta+1)}$\\
		& $\gI(1, t) = 0$\\
		& $u_0(x) = w(x) + \frac{\theta-1}{\theta+1} w(2 - x)$ \\
		& $u_1(x) = - c w'(x) - c \frac{\theta-1}{\theta+1} w'(2-x)$ \\\cline{2-2}
		& $u(x, t) =w(x - ct) + \frac{\theta-1}{\theta+1} w(2 - x - ct)$ \\\hline
	\end{tabular}
	\caption{IBVP for a smooth travelling wave with homogeneous source term.}
	\label{tab:P2}
	\end{subtable}
	\centering
	\begin{subtable}[t]{.5\linewidth}
	\begin{tabular}{|c|p{.85\linewidth}|}
		\hline
		\multirow{7}{*}{\rotatebox{90}{\textsc{Problem 3}}} & $c = 1$ \\
		    & $\theta = 1$ \\\cline{2-2}
		    & $f(x, t) = 0$ \\
		    & $\gI(\pm1, t) = 0$\\
		    & $u_0(x) = w(x + 1)$\\
		    & $u_1(x) = - c w'(x + 1)$\\\cline{2-2}
		    & $u(x, t) = w(x - ct + 1) \mathbf{1}_{\{x - ct + 1 > 0\}}$\\\hline
	\end{tabular}
	\caption{IBVP for a non-smooth travelling wave.}
	\label{tab:P3}
	\end{subtable}
	\caption{Data and solutions of the test impedance initial--boundary value problems.
	The solutions $u$ are plotted in Figure~\ref{fig:solPlots}.
	Here $w$ is the symmetric double-Gaussian wave profile $w(x):=e^{-20(x-0.1)^2}-e^{-20(x+0.1)^2}$.
	In Problem~2, $|g_I|\le 4\cdot10^{-6}$ on $\SI$ because of the Gaussian decay, so in practice we approximate it with 0.}
	\label{tab:Problems}
\end{table}
\renewcommand{\arraystretch}{\prestretch}

In the following we consider three IBVPs:
Table~\ref{tab:Problems} reports their data and exact solutions, and Figure~\ref{fig:solPlots} shows the solutions in $Q$.
The first two problems have smooth data and smooth solutions $u\in C^\infty(\overline Q)$.
Instead, Problem~3 has smooth data, but the solution is not smooth, in fact $u$ is not even in $H^2(Q)$ (more precisely, $u\in H^{3/2-\epsilon}(Q)\setminus H^{3/2}(Q)$ for all $\epsilon>0$), as the initial and boundary data $u_0,u_1,\gI$ (see Table~\ref{tab:P3}) fail to verify the compatibility condition \eqref{eq:compatibility} at the point $(-1,0)\in \partial\OZ$, which is required for the regularity Theorem~\ref{thm:t_regularity} to hold.
Indeed, in this case
\begin{align*}
	\NormDer u_0(-1)+(c\theta)^{-1}u_1(-1)&\neq\gI(-1).
\end{align*}
Such incompatibility generates a jump in the time and space derivatives of the solution along the line $\{x-ct+1=0\}$, and the individual second-order derivatives are not in $L^2(Q)$.
However, the distributional wave operator $\oW u$ vanishes, thus $u\in W$.

\begin{remark}[$u\in V$ for Problem 3]
The solution $u$ to the problem in Table~\ref{tab:P3} (shown in Figure~\ref{fig:Sol3Plot}) belongs not only to $W$, but also to $V$.
To show this, we construct a sequence of functions in $C^\infty(\overline{Q})$, that converges to $u$ in the $\|\cdot\|_V$ norm.
Consider the function $\tilde{u}_0\in C^0(\R)$ defined as
$\tilde u_0(x) = w(x+1) \mathbbm{1} _{[-1, +\infty)}(x)$.
For any $\epsilon>0$, let $\tilde u_0^\epsilon$ be the function $\tilde u_0$ shifted to the right by $\epsilon$, i.e.\
$\tilde u_0^\epsilon(x) = w (x+1 - \epsilon) \mathbbmss{1}_{[-1+\epsilon, +\infty)}(x)$.
Let $\varphi_\epsilon := \eta_{\epsilon/2}*\tilde u_0^\epsilon$, where $\eta_{\epsilon/2}\in C^\infty(\R)$ is the mollifier with support $[-\epsilon/2,\epsilon/2]$ defined in \cite[sect.\ C.5]{EvansPDE}.
Then $\varphi_\epsilon\in C^\infty(\R)$, is supported in $(-1+\epsilon/2, +\infty)$ and, for $\epsilon\rightarrow0$, $\varphi_\epsilon\rightarrow\tilde u_0$ in $H^1(\R)$.
Consider now $\tilde u_\epsilon(x, t) = \varphi_\epsilon(x-ct)$: clearly $\tilde u_\epsilon \in C^\infty(\R^2)$, and its support lies in $\{(x, t): x-ct+1\ge\epsilon/2\}$.
Moreover, $u_\epsilon:=\tilde u_\epsilon|_Q\rightarrow u$ in $H^1(Q)$
since $u(x, t) = \tilde u_0(x - ct)$: this is
the sequence of functions $u_\epsilon$ we were looking for.
Indeed, convergence in $V$ of $u_\epsilon$ follows noting that $\oW u_\epsilon =\oW u= 0$, and that, since $\varphi_\epsilon$, $\varphi'_\epsilon$ converge almost everywhere in $\R$ and using Lebesgue convergence theorem, also the traces of $\TimeDer u_\epsilon$ and $\Grad u_\epsilon$ on the boundary parts of $Q$ converge.
\end{remark}

\begin{figure}[htb]
\centering
\begin{subfigure}{.33\linewidth}
	\centering
	\includegraphics[width=\linewidth]{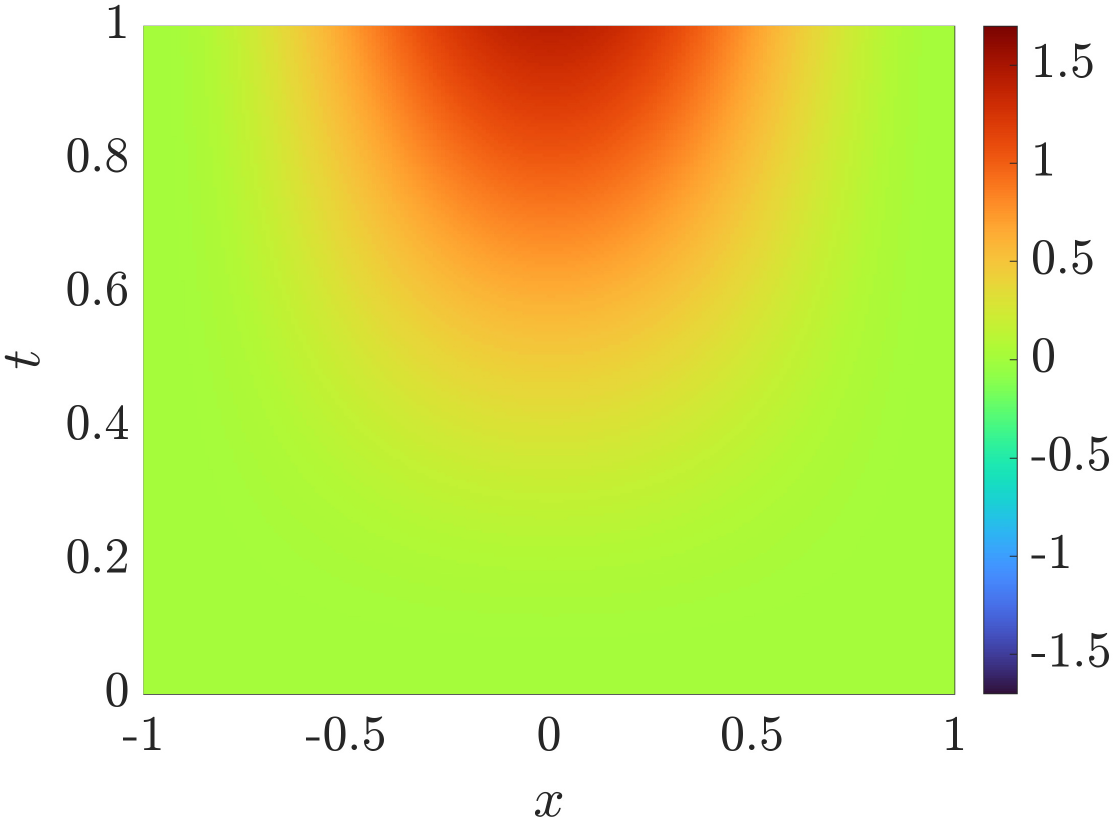}
	\caption{Solution of Problem~1.}
\end{subfigure}\begin{subfigure}{.33\linewidth}
	\centering
	\includegraphics[width=\linewidth]{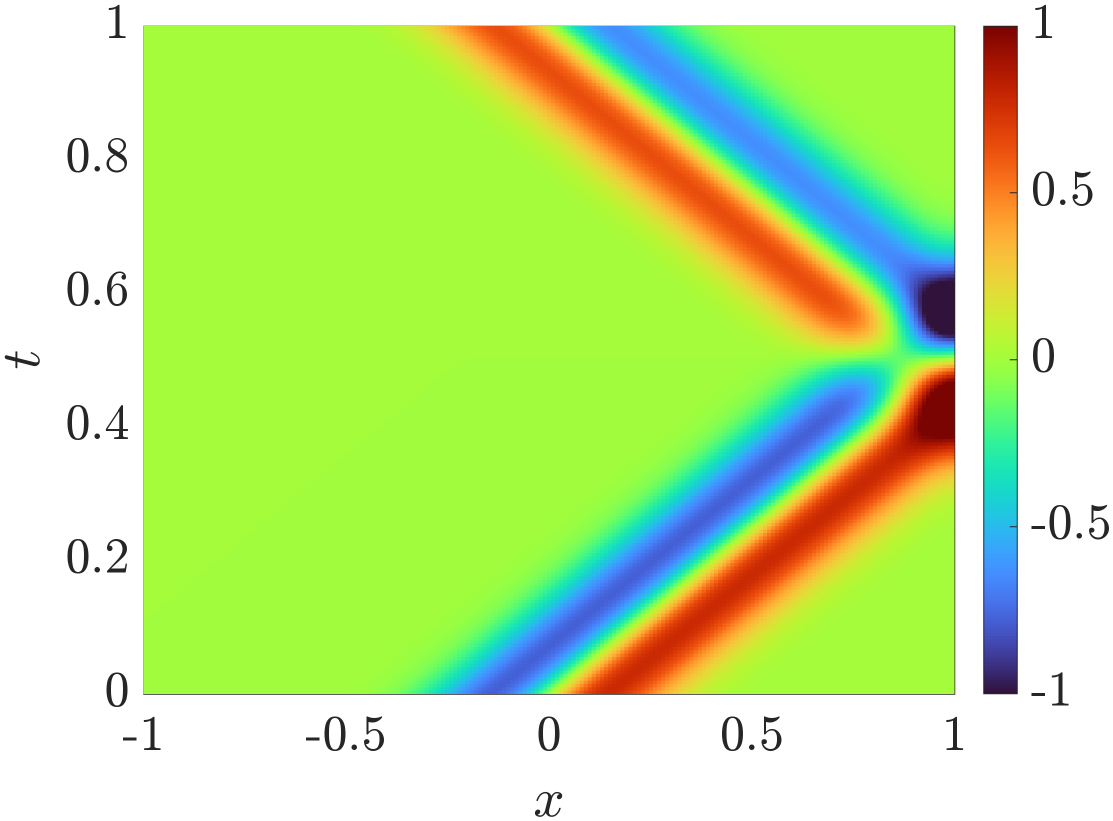}
	\caption{Solution of Problem~2.}
\end{subfigure}\begin{subfigure}{.33\linewidth}
	\centering
	\includegraphics[width=\linewidth]{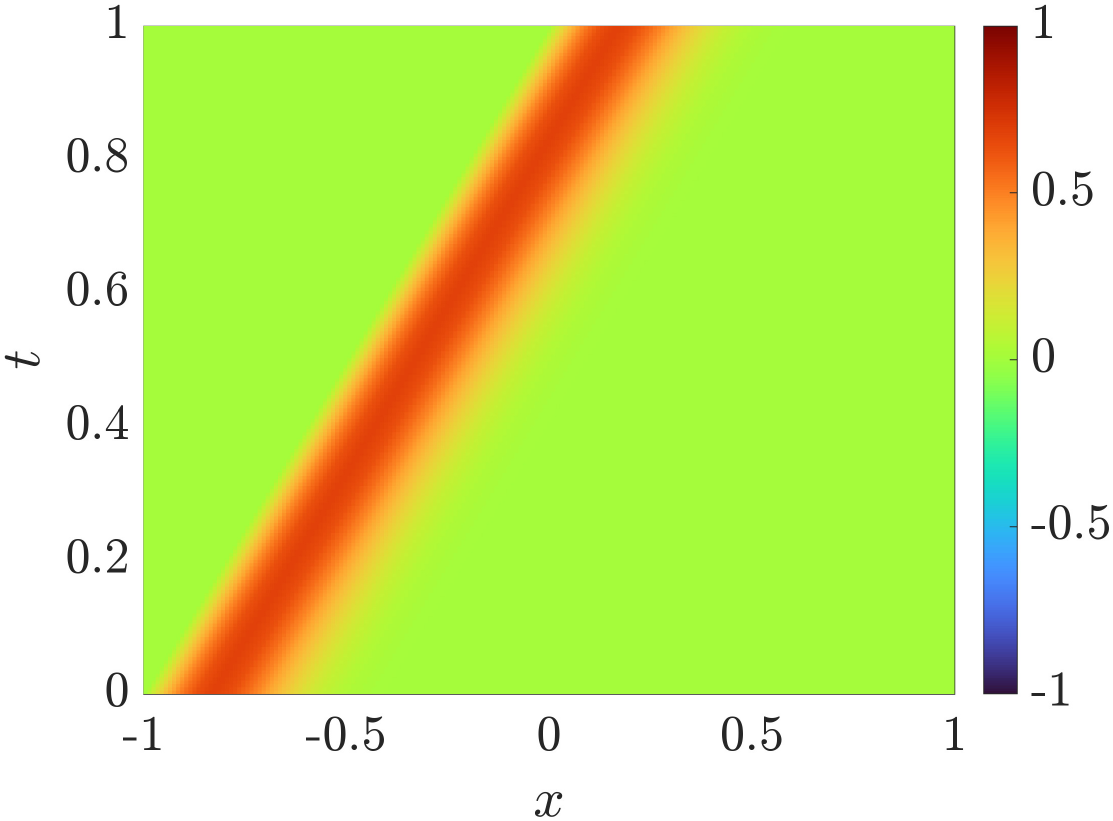}
	\caption{Solution of Problem~3.}
	\label{fig:Sol3Plot}
\end{subfigure}
\caption{The solutions to the problems in Table~\ref{tab:Problems}.
Left: solution to Problem~1 with a volume source term.
Centre: the solution of Problem~2 is a wave packet travelling through the domain and bouncing off the impedance boundary (since $\theta\ne1$).
Right: the solution $u\in V\setminus H^2(Q)$ of Problem~3 is non-smooth on the line $x-ct+1=0$.}
\label{fig:solPlots}
\end{figure}

\subsection{Testing formulation parameters}
\label{subsec:testA}
The bilinear form $b(\cdot,\cdot)$ and the linear functional $F(\cdot)$ in \eqref{eq:def:bilin}--\eqref{eq:def:lin} depend on the parameters $A_Q,A_\OZ,\beta,\xi$ and $\nu$ (recall that $T^*=\nu T$).
The main objective of this section is to study the sensitivity of the Galerkin solution with respect to their choice.
We show that a good choice of the parameters can improve accuracy, but no fine tuning is necessary for the stability and convergence of the method.
We consider Problem~1 as described in Table~\ref{tab:P1}; however, we observed similar results across the IBVPs of Table~\ref{tab:Problems}.

\subsubsection{Parameters \texorpdfstring{$A_Q$}{AQ} and \texorpdfstring{$A_\OZ$}{AOmegaZero}.}
We first assess the sensitivity with respect to the two least-squares parameters $A_Q$ and $A_\OZ$.
Let $N_x = N_t = 32$.
We approximate the solution to Problem~1 using \eqref{eq:hGalerkin}, with  $\xi=1$, $\beta=\beta^\#$, and $\nu=2$ as in \eqref{eq:ParamChoice}, and where $A_Q$ and $A_\OZ$ are picked from the sets
\begin{equation*}
\left\{10^{s_k} : s_k \;\text{are $75$ equispaced nodes in}\;[-15, 2]\right\}
\quad \text{and}\quad
\left\{10^{s_k} : s_k \;\text{are $75$ equispaced nodes in}\;[-7, 3]\right\},
\end{equation*}
respectively.
For each choice of the parameters $A_Q$ and $A_\OZ$ we compute the relative $L^2(Q)$ error and the condition number of the corresponding Galerkin matrix $\mat B$ in \eqref{eq:hGalerkin}.
Figure~\ref{fig:testA} shows the results of such computations, and suggests that $A_Q \sim 10^{-2}$ and $A_\OZ\sim1$ lead to the best accuracy.
Moreover, this parameter choice also leads to a reduction in the condition number.
In the following sections we compare the cases $A_Q=10^{-2}$ and $A_Q=1$, both with $A_\OZ=1$.

\begin{figure}[ht!]
	\begin{subfigure}{.5\linewidth}
		\centering
		\includegraphics[width=\linewidth]{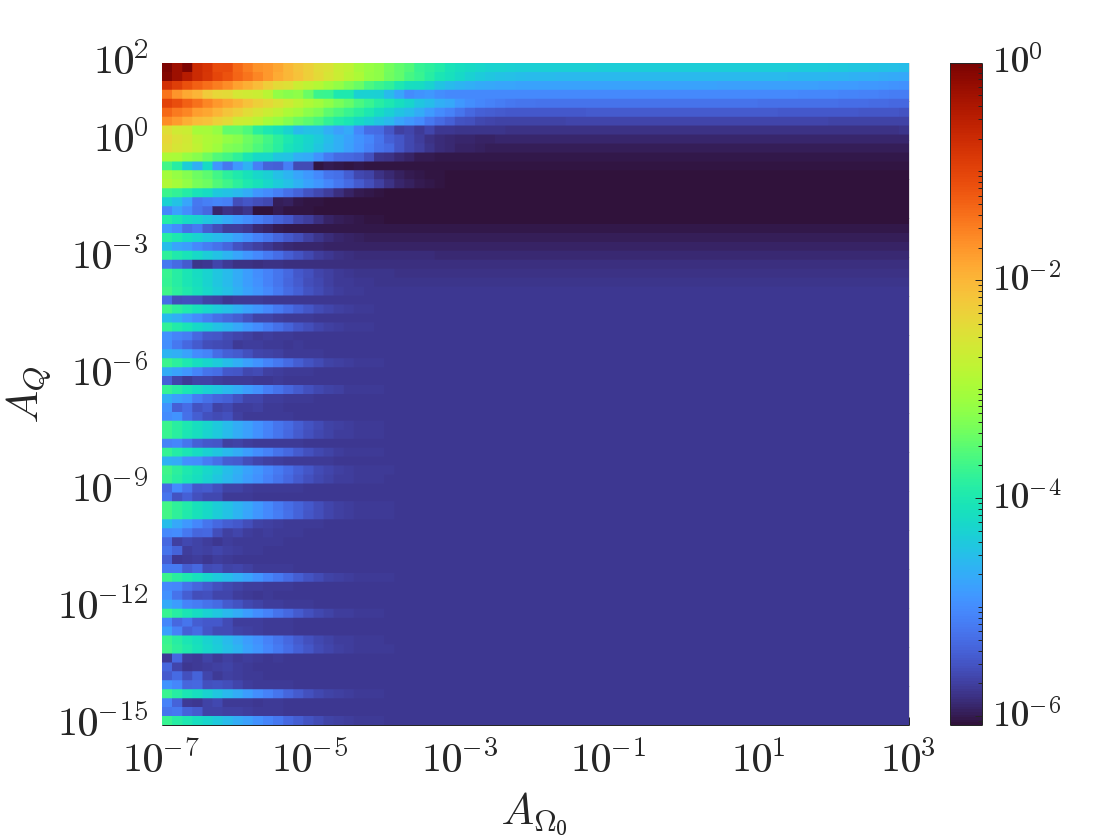}
        \caption{$L^2(Q)$ relative error.}
	\end{subfigure}\begin{subfigure}{.5\linewidth}
		\centering
		\includegraphics[width=\linewidth]{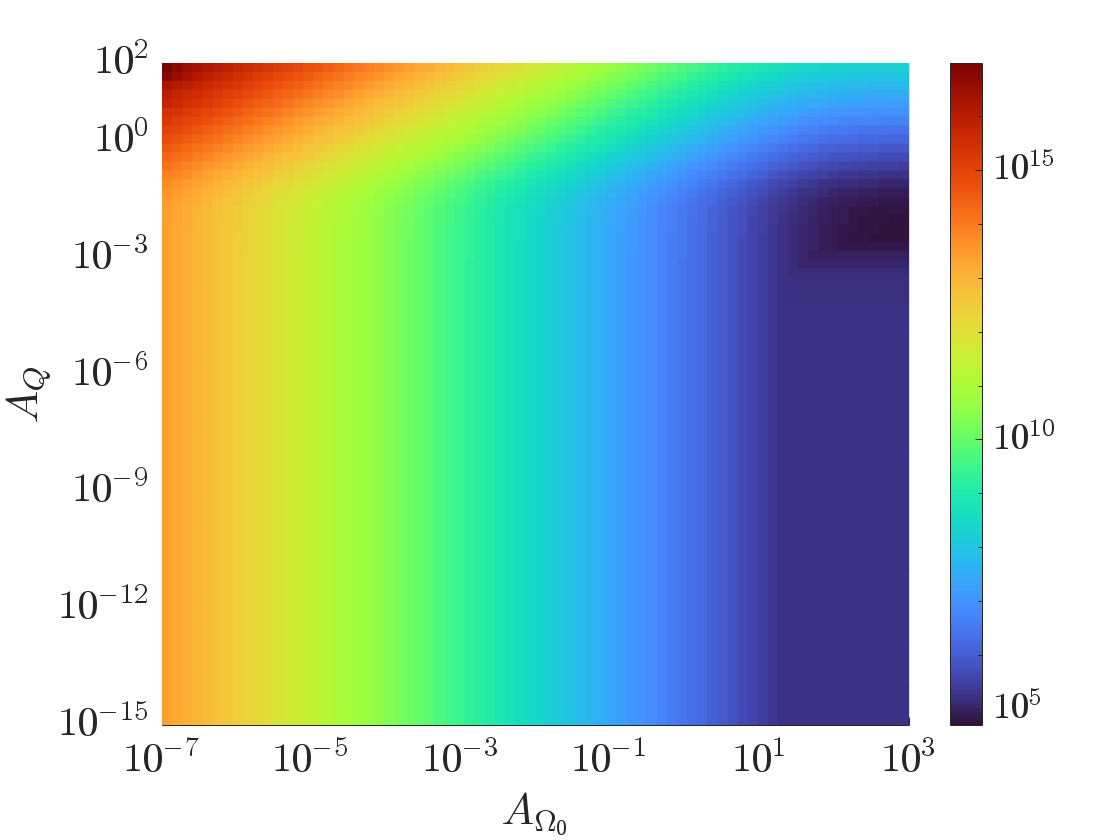}
		\caption{Condition number $\kappa_2(\mat B)$.}
	\end{subfigure}
    \caption{Problem 1: sensitivity with respect to the parameters $A_Q$ and $A_\OZ$. Left: $L^2(Q)$ relative error. Right: condition number of the Galerkin matrix.
}
	\label{fig:testA}
\end{figure}

Is the volume least-squares term $\int_Q \oW u \oW v\dx\dt$ necessary in the definition of the bilinear form $b$?
We can provide some insights by letting $A_Q \rightarrow 0$ while fixing $A_\OZ=1$, and looking at the corresponding $L^2(Q)$ error.
Figure~\ref{fig:fixedAs} shows that, while the optimal accuracy is achieved for $A_Q\approx0.02$,
the error for any positive value of $A_Q$ smaller than this value is larger than this optimal case by a small factor (at most 2.063).
This suggests that the least-squares term, required in the proof of coercivity in Theorem~\ref{thm:coerc_b}, is not necessary for a stable and accurate method.
However, when $A_Q = 0$ the method converges to the desired solution but the rates are sub-optimal, as shown in Figure~\ref{fig:degConvPlot} below.

The same reasoning cannot be applied to the term $\int_\OZ uv\dx$, because when this term is dropped (i.e.\ setting $A_\OZ = 0$) the Galerkin matrix $\mat B$ is no longer invertible as the constant functions belong to its kernel.
Indeed, from the red region on the left of both plots in Figure~\ref{fig:testA}, we see that for $A_\OZ\to0$ the method loses accuracy and stability.

\begin{figure}[ht!]
\centering
\begin{tikzpicture}
\pgfplotstableread[col sep=comma]{testA-p1-numEl32-bestA0errslice.dat}\fixedAzero
\begin{loglogaxis}[ xlabel={$A_Q$}, grid=major, width=0.75\linewidth, height=5cm, enlarge x limits=false, legend pos=north west,
	xtick={1e-16, 1e-14,1e-12,1e-10,1e-8,1e-6,1e-4, 1e-2, 1e0, 1e2}]
\addplot[blue, mark=none, line width=1pt] table[col sep=comma]{\fixedAzero};
\addlegendentry{$\|u-u_h\|_{L^2(Q)}/\|u\|_{L^2(Q)}$};
\end{loglogaxis}
\end{tikzpicture}
\caption{Relative error of the numerical solution to Problem 1 for $A_\OZ=1$ and variable $A_Q$.
The parameters $\beta=\beta^\#$, $\xi=1$, $\nu=2$ are chosen as in \eqref{eq:ParamChoice}.
}
\label{fig:fixedAs}
\end{figure}
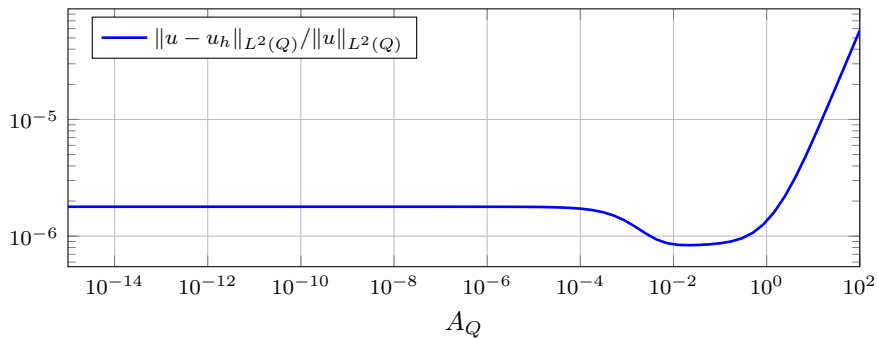

\subsubsection{Parameters \texorpdfstring{$\beta$}{beta}, \texorpdfstring{$\xi$}{xi} and \texorpdfstring{$\nu$}{nu}}
The theory developed in section~\ref{sec:CC} ensures the well-posedness and the stability of the variational problem \eqref{eq:vpV} and its Galerkin discretisation \eqref{eq:hGalerkin} if $\xi>0$, $\nu>1$, and $\beta$ satisfies the lower bound \eqref{eq:coefficient_coercivity_cond}.
To assess the sensitivity of the numerical solution with respect to these parameters, we let $N_x = N_t = 32$ and approximate the solution to Problem~1 by solving \eqref{eq:hGalerkin}
with $A_Q=10^{-2}$, $A_\OZ=1$, and $\beta$, $\xi$ and $\nu$ picked from the sets
\begin{align}\nonumber
	\beta\in\,&\big\{10^{s_k}: s_k\;\text{are $75$ equispaced nodes in}\;[-5, 5]\big\},\\
    \xi\in\,&\big\{10^{s_k}: s_k\;\text{are $75$ equispaced nodes in}\;[-3, 3]\big\},
	\label{eq:SetBetaXi}\\
	\nonumber
	\text{and}\;\;
	\nonumber
	\nu\in\,&\big\{10^{s_k}: s_k\;\text{are $10$ equispaced nodes in}\;[0.001, 2]\big\},
\end{align}
respectively.
We compute the $L^2(Q)$ relative error of the numerical solution and the condition number $\kappa_2(\mat B)$ in \eqref{eq:matGalerkin} for every choice of the parameters.

Such computations show that accuracy of the solution is not affected heavily by $\nu$.
Indeed, letting only $\nu$ vary, the maximum and the minimum of the $L^2$ error for fixed $\beta$ and $\xi$ differs at most by a factor $\approx30$ when $\beta$, $\xi$ and $\nu$ are picked to satisfy \eqref{eq:coefficient_coercivity_cond} and at most by a factor $\approx1.05$ under the additional assumption that $\xi\ge1$. Because of this, in the following we set $\nu=2$.

Clearly, for $\beta$ and $\xi$ such that \eqref{eq:coefficient_coercivity_cond} does not hold, which in Figure~\ref{fig:BetaXiErr} corresponds to the dotted region below the white line, the solution to \eqref{eq:hGalerkin} is not necessarily accurate because the problem is not necessarily well-posed, for coercivity is not guaranteed.
Despite this, some values of $\beta$ and $\xi$ in this region yield a good approximation of the exact solution, suggesting that \eqref{eq:coefficient_coercivity_cond} is a sufficient but not necessary condition for well-posedness.
Instead, when \eqref{eq:coefficient_coercivity_cond} is satisfied (region above the white line), the $L^2(Q)$ relative error is not much affected by the choice of $\beta$ and $\xi$. As noted in \S\ref{remark:condNumber}, the condition number of the matrix $\mat B$ is smallest when $\xi\sim1$ and $\beta$ satisfies~\eqref{eq:coefficient_coercivity_cond}.
In our numerical experiments, we found little dependence of the optimal parameters and their robustness on the $h_t$ and $h_x$ parameters, when the two mesh sizes decrease proportionally to one another.

\begin{figure}[ht!]
    \label{fig:betaxi_test}
	\begin{subfigure}{.5\linewidth}
		\includegraphics[width=\linewidth]{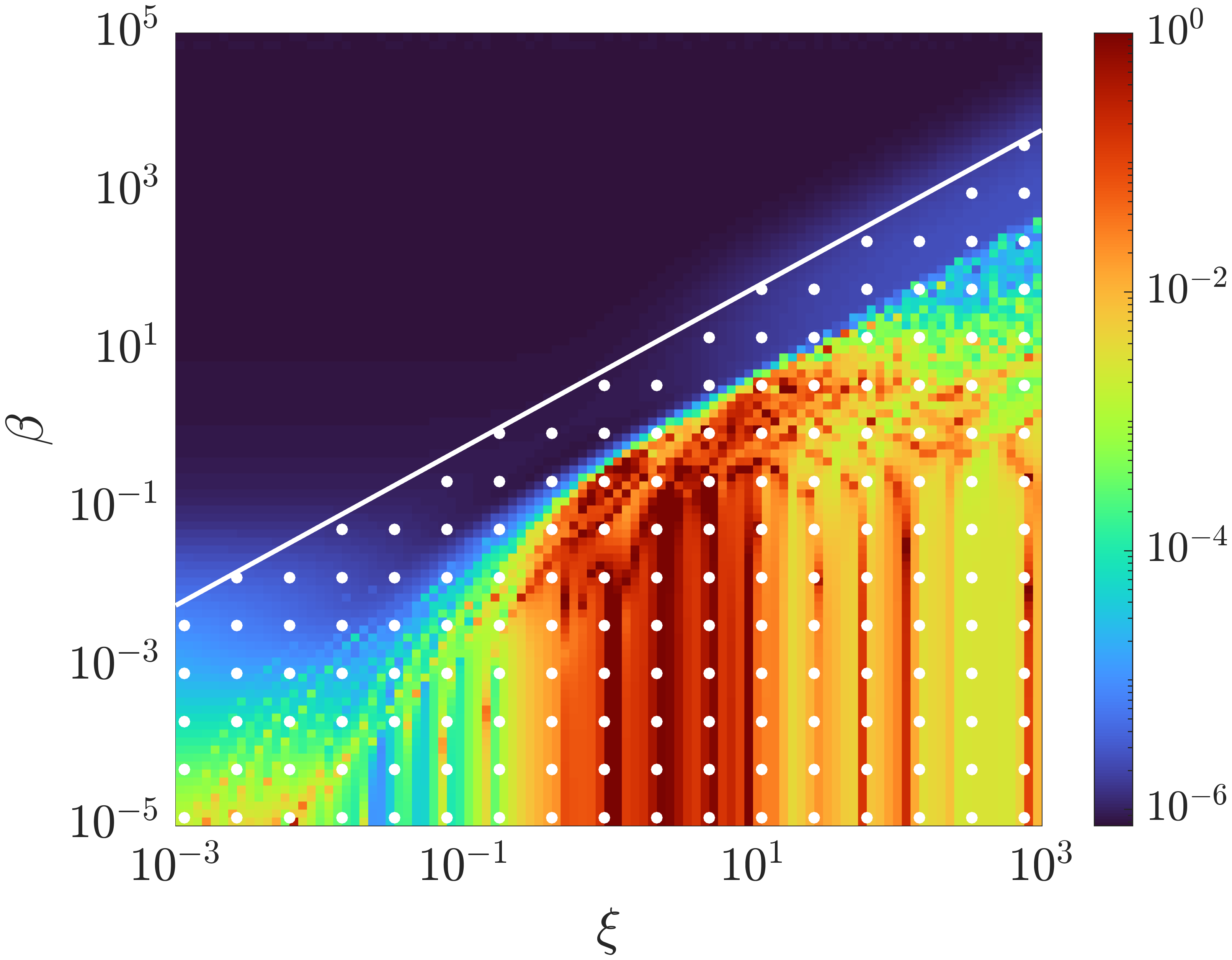 }
		\caption{$L^2(Q)$ relative error}
	\end{subfigure}\begin{subfigure}{.5\linewidth}
	\centering
	\includegraphics[width=\linewidth]{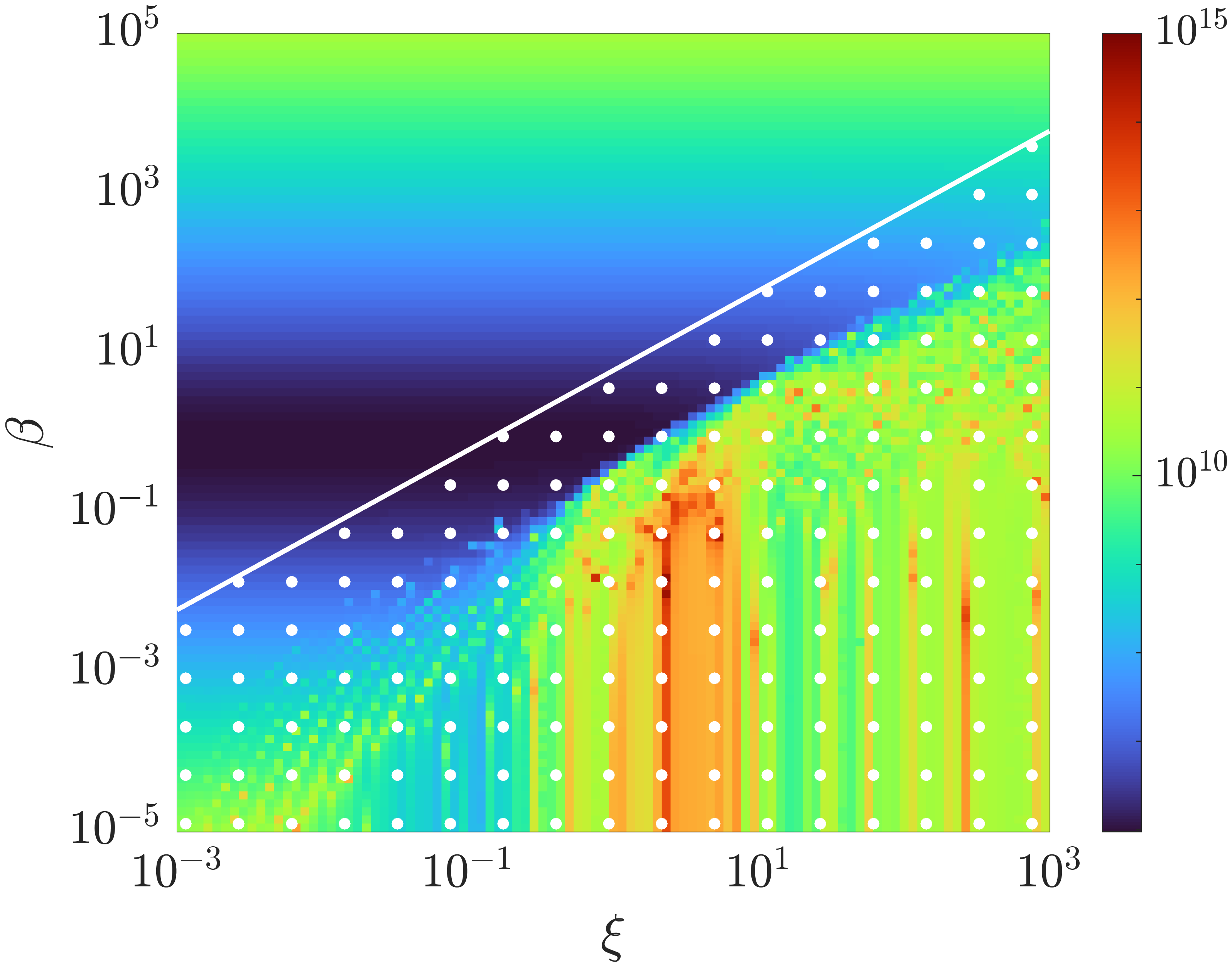 }
	\caption{Condition number of $\mat B$}
\end{subfigure}
\caption{$L^2(Q)$ relative error of the Galerkin solution and $\kappa_2(\mat B)$, for $N_x=N_t=32$, $A_Q = 10 ^ {-2}$, $A_\OZ = 1$ and $\nu=2$, while $\beta$ and $\xi$ vary in the sets \eqref{eq:SetBetaXi}.
The dotted region under the white line represents the pairs of $(\xi,\beta)$ for which \eqref{eq:coefficient_coercivity_cond} is violated.}
\label{fig:BetaXiErr}
\end{figure}

\subsection{Unconditional stability}

We demonstrate that, as proved in \S\ref{sec:interior_problem}, the formulation \eqref{eq:hGalerkin} is unconditionally stable, in the sense that no CFL condition is needed.
In other words we check that the Galerkin error remains bounded even when $h_x\ll h_t$.
To confirm this, compute the $L^2(Q)$ and $H^1(Q)$ relative errors\footnote{The $H^1(Q)$ norm is scaled to be dimensionally homogeneous as
$\|u\|_{H^1(Q)}^2 = T^{-2}\|u\|_Q^2 + \|\TimeDer u\|_Q^2 + c^2\|\Grad u\|_Q^2$.}
for Problem~1 and Problem~2 for $N_t = 8$ and $N_x \in \{2^n,\;n=1,\dots, 11\}$---with number of degrees of freedom ranging between $108$ and $73\:764$---and the parameters $A_Q=10^{-2}$, $A_\OZ=1$, $\xi=1$, $\beta=\beta^\#$, $\nu=2$ as in \eqref{eq:ParamChoice}.
As Figure~\ref{fig:CFLhx} shows, the $L^2(Q)$ and $H^1(Q)$ errors do not depend on the ratio $h_t/h_x$: indeed for large values of $h_t/h_x$ the error remains stable and is determined only by the time-mesh size $h_t$.
This is to be expected, as the formulation is well-posed for any conforming discrete space, regardless of the shape of the space--time elements and, in particular, the ratio $h_t / h_x$.

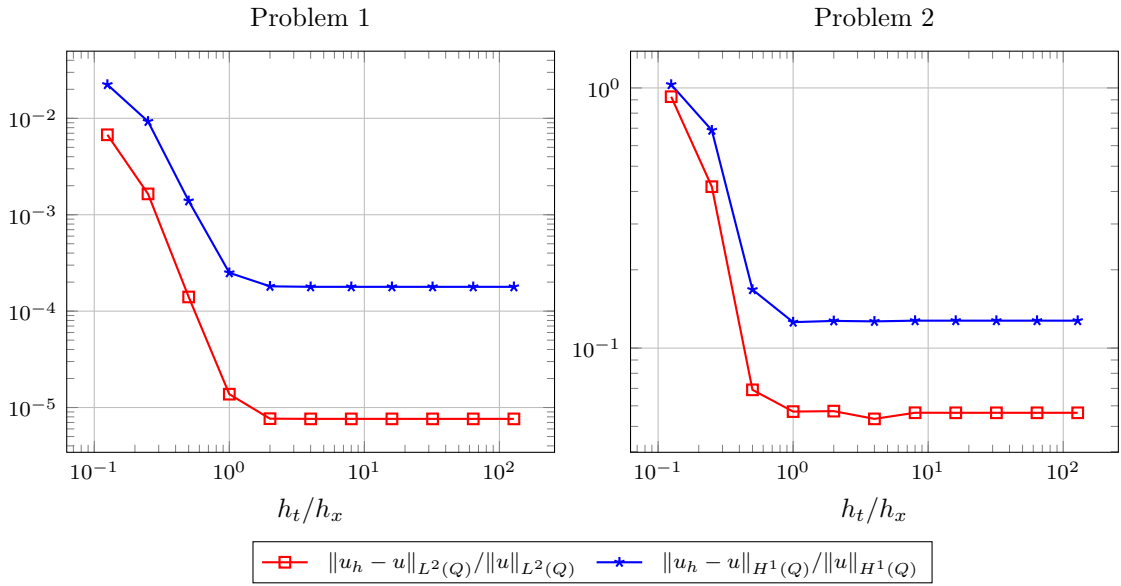
\begin{figure}[ht!]
\centering
\begin{tikzpicture}
\pgfplotstableread[col sep=comma]{p1-ptypeOPT-etypeRELATIVE-testcfl-nt8--convTable.dat}\datatabletwo;
\pgfplotstableread[col sep=comma]{p2-ptypeOPT-etypeRELATIVE-testcfl-nt8--convTable.dat}\datatablethree
\begin{groupplot}[
    group style={group size=2 by 1},
    width=0.5\linewidth,
    xlabel=$h_t/h_x$,
    legend style={column sep=0.5em},
    grid=major,
    height=6cm
    ]
	\nextgroupplot[ymode=log, xmode=log,  legend columns=2,
		legend to name=CFLlegend, title={Problem~1}]
	\addplot[L2error style] table[
		create on use/hthx/.style={create col/expr=\thisrow{Ht}/\thisrow{Hx}},
		columns={hthx, L2errors},
		x=hthx,
		y=L2errors,
		col sep=comma] {\datatabletwo};
	\addlegendentry{$\|u_h-u\|_{L^2(Q)}/\|u\|_{L^2(Q)}$};
	\addplot[H1error style] table[
		create on use/hthx/.style={create col/expr=\thisrow{Ht}/\thisrow{Hx}},
		x=hthx,
		y=H1errors,
		col sep=comma] {\datatabletwo};
	\addlegendentry{$\|u_h-u\|_{H^1(Q)}/\|u\|_{H^1(Q)}$};
\nextgroupplot[ymode=log, xmode=log, title={Problem~2}]
	\addplot[L2error style] table[
		create on use/hthx/.style={create col/expr=\thisrow{Ht}/\thisrow{Hx}},
		columns={hthx, L2errors},
		x=hthx,
		y=L2errors,
		col sep=comma] {\datatablethree};
	\addplot[H1error style] table[
		create on use/hthx/.style={create col/expr=\thisrow{Ht}/\thisrow{Hx}},
		x=hthx,
		y=H1errors,
		col sep=comma] {\datatablethree};
\end{groupplot}
\node [yshift=-3em, anchor=north] at ($(group c1r1.south)!0.5!(group c2r1.south)$) {\pgfplotslegendfromname{CFLlegend}};
\end{tikzpicture}
\caption{Unconditional stability test: $L^2(Q)$ and $H^1(Q)$ relative errors of the Galerkin solution of Problems~1 and~2 for $N_x \in \{2^n, n = 1,\dots,11\}$ and $N_t = 8$ ($h_t=0.125$).}
\label{fig:CFLhx}
\end{figure}

\subsection{Convergence analysis}

We study the $h$-convergence of the Galerkin error in $L^2(Q)$, $H^1(Q)$ and $V$ norms for the three problems in Table~\ref{tab:Problems}.
Figure~\ref{fig:ConvPlots} shows the errors for $N_x = N_t \in \{2^n,\; n=1,\ldots,7\}$, i.e.\ with numbers of degrees of freedom ranging between $36$ and $66\:564$.
The parameters $\xi=1$, $\beta=\beta^\#$, $\nu=2$ are picked as in \eqref{eq:ParamChoice}, $A_\OZ=1$ and we present results for both $A_Q=10^{-2}$ (left) and $A_Q=1$ (right).
The plots also show the best-approximation errors in each of the three norms (dashed lines).

Recalling that the $V$ norm is controlled by the $H^2(Q)$ norm, the quasi-optimality \eqref{eq:QOest} and using the approximation properties of cubic splines in $H^2(Q)$ (see e.g.\ \cite[Lemma~3.1]{BazilevsBeiraoCottrellHughesSangalli2006}), the $V$ norm of the Galerkin error decays quadratically in the mesh size $h:=\sqrt{h_x^2+h_t^2}=\sqrt5\, h_t$,
as long as the solution is sufficiently regular:
\begin{equation}\label{eq:RatesV}
\|u - u_h\|_V
\lesssim\inf_{v_h\in V_h}\|u - v_h\|_V
\lesssim\inf_{v_h\in V_h}\|u-v_h\|_{H^2(Q)}
\lesssim h^2 \|u\|_{H^4(Q)}.
\end{equation}
Optimal convergence in $V$ norm (green line) is observed in the plots for Problems~1 and 2, which admit smooth solutions.
We observe optimal convergence rates also in $H^1(Q)$ (rate $h^3$, blue lines) and $L^2(Q)$ norms (rate $h^4$, red lines); these are not ensured by the theory, which only guarantees $h^2$ rates from \eqref{eq:norm_def}, \eqref{eq:L2V} and \eqref{eq:RatesV}.

The bottom panels of Figure~\ref{fig:ConvPlots} show the convergence plot for Problem~3, whose solution $u\in H^{3/2-\epsilon}(Q)\setminus H^{3/2}(Q)$ for all $\epsilon>0$.
While the best-approximation errors in $H^1(Q)$ and $L^2(Q)$ (dashed blue and red lines) show the optimal convergence rates $h^{1/2}$ and $h^{3/2}$, respectively, the Galerkin solution (continuous lines) is slightly suboptimal.

For all three IBVPs, the $H^1(Q)$ and $L^2(Q)$ errors are smaller for $A_Q=10^{-2}$ (left plots) than for $A_Q=1$ (right plots), while for the $V$-norm error (green lines) the comparison is reversed.
This apparently happens because a small value of $A_Q$ does not control sufficiently strongly the residual term $\|\oW u_h\|_Q$, which is present in the $V$ norm.
This is also the reason why we observe a Galerkin error rate slightly lower than the best-approximation error rate for Problem 3 when $A_Q=10^{-2}$ in Figure~\ref{fig:ConvPlots} (see the green curves in the lower left plot).
Indeed, if this term is left out of the $V$ norm, then we have observed that the error for $A_Q=10^{-2}$ is smaller than that for $A_Q=1$ for all norms (plots not reported here).
The differences in accuracy between the two values of $A_Q$ are negligible for the source-driven Problem 1, and more substantial for the homogeneous Problems~2 and 3.

Analogous error plots for the formulation with $A_Q=0$, i.e.\ without the least-squares term, for which we cannot prove the coercivity, are shown in Figure~\ref{fig:degConvPlot}.
We observe convergence at slightly lower rates of order at least 3, 2, and 1 for the $L^2(Q)$, $H^1(Q)$, and $V$-norm errors, respectively, in both Problems~1 and~2.

Figure~\ref{fig:CondNumPlots} shows that for $A_Q=1$ the condition number $\kappa_2(\mat B)$ grows as $h^4$, confirming Remark~\ref{remark:condNumber}.
For $A_Q=10^{-2}$, the volume least-square term $\int_Q\oW u\oW w\dx\dt$ in $b(\cdot,\cdot)$ is less dominant and the rate is lower than $h^4$ for the range of parameters considered.
For the finest mesh considered,
when $A_Q=1$ the condition number is two orders of magnitude larger than when $A_Q=10^{-2}$, and is large enough to
cause an increase of the $L^2(Q)$ Galerkin error in the top-right plot of Figure~\ref{fig:ConvPlots}.

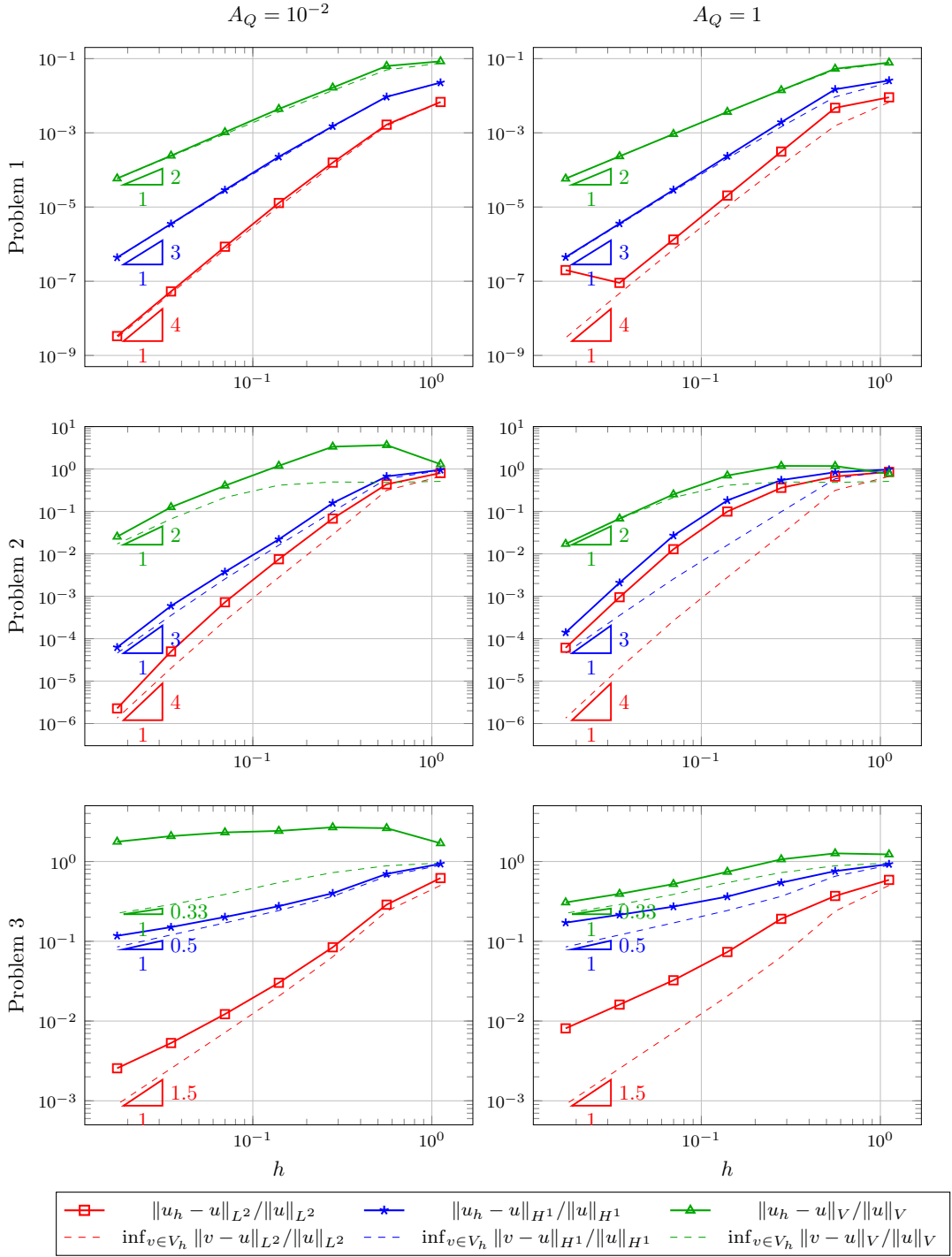
\begin{figure}
\centering
\pgfplotstableread[col sep=comma]{p1-ptypeOPT-etypeRELATIVE--convTable.dat}\OPTdataone
\pgfplotstableread[col sep=comma]{p1-ptypeGEN-etypeRELATIVE--convTable.dat}\GENdataone
\pgfplotstableread[col sep=comma]{p2-ptypeOPT-etypeRELATIVE--convTable.dat}\OPTdatatwo
\pgfplotstableread[col sep=comma]{p2-ptypeGEN-etypeRELATIVE--convTable.dat}\GENdatatwo
\pgfplotstableread[col sep=comma]{p3-ptypeOPT-etypeRELATIVE--convTable.dat}\OPTdatathree
\pgfplotstableread[col sep=comma]{p3-ptypeGEN-etypeRELATIVE--convTable.dat}\GENdatathree
\begin{tikzpicture}[scale=0.9]
\begin{groupplot}[group style={group size=2 by 3}, width=0.5\linewidth,  legend style={column sep=0.5em}, grid=major
]
\nextgroupplot[ymode=log, xmode=log,  legend columns=3,
legend to name=p1grouplegend,
title={$A_Q=10^{-2}$},
ylabel=Problem~1,
ytick = {1e-9, 1e-7, 1e-5, 1e-3, 1e-1},
ymin=0.5e-9, ymax=0.2e0
]
\addplot[L2error style] table[x=H, y=L2errors]{\OPTdataone};
\addlegendentry{$\|u_h-u\|_{L^2}/\|u\|_{L^2}$};
\addplot[H1error style] table[x=H, y=H1errors]{\OPTdataone};
\addlegendentry{$\|u_h-u\|_{H^1}/\|u\|_{H^1}$};
\addplot[Verror style] table[x=H, y=Verrors]{\OPTdataone};
\addlegendentry{$\|u_h-u\|_{V}/\|u\|_{V}$};
\addplot[L2bestapprox style] table[x=H, y=L2projErrors]{\OPTdataone};
\addlegendentry{$\inf_{v\in V_h}\|v-u\|_{L^2}/\|u\|_{L^2}$};
\addplot[H1bestapprox style] table[x=H, y=H1projErrors]{\OPTdataone};
\addlegendentry{$\inf_{v\in V_h}\|v-u\|_{H^1}/\|u\|_{H^1}$};
\addplot[Vbestapprox style] table[x=H, y=VprojErrors]{\OPTdataone};
\addlegendentry{$\inf_{v\in V_h}\|v-u\|_{V}/\|u\|_{V}$};
\logLogSlopeTriangle{0.2}{0.1}{0.08}{4}{L2error style};
\logLogSlopeTriangle{0.2}{0.1}{0.32}{3}{H1error style};
\logLogSlopeTriangle{0.2}{0.1}{0.57}{2}{Verror style};
\nextgroupplot[ymode=log, xmode=log,
title={$A_Q=1$},
ytick = {1e-9, 1e-7, 1e-5, 1e-3, 1e-1},
ymin=0.5e-9, ymax=0.2e0
]
\addplot[L2error style] table[x=H, y=L2errors]{\GENdataone};
\addplot[H1error style] table[x=H, y=H1errors]{\GENdataone};
\addplot[Verror style] table[x=H, y=Verrors]{\GENdataone};;
\addplot[L2bestapprox style] table[x=H, y=L2projErrors]{\GENdataone};
\addplot[H1bestapprox style] table[x=H, y=H1projErrors]{\GENdataone};
\addplot[Vbestapprox style] table[x=H, y=VprojErrors]{\GENdataone};
\logLogSlopeTriangle{0.2}{0.1}{0.08}{4}{L2error style};
\logLogSlopeTriangle{0.2}{0.1}{0.32}{3}{H1error style};
\logLogSlopeTriangle{0.2}{0.1}{0.57}{2}{Verror style};
\nextgroupplot[ymode=log, xmode=log,  legend columns=3,
	legend to name=p2grouplegend,
ylabel=Problem~2,
	ytick = {1e-6, 1e-5, 1e-4, 1e-3, 1e-2, 1e-1, 1e0, 1e1},
	ymin=0.3e-6, ymax=1e1
	]
\addplot[L2error style] table[x=H, y=L2errors]{\OPTdatatwo};
\addlegendentry{$\|u_h-u\|_{L^2}/\|u\|_{L^2}$};
\addplot[H1error style] table[x=H, y=H1errors]{\OPTdatatwo};
\addlegendentry{$\|u_h-u\|_{H^1}/\|u\|_{H^1}$};
\addplot[Verror style] table[x=H, y=Verrors]{\OPTdatatwo};
\addlegendentry{$\|u_h-u\|_{V}/\|u\|_{V}$};
\addplot[L2bestapprox style] table[x=H, y=L2projErrors]{\OPTdatatwo};
\addlegendentry{$\inf_{v\in V_h}\|v-u\|_{L^2}/\|u\|_{L^2}$};
\addplot[H1bestapprox style] table[x=H, y=H1projErrors]{\OPTdatatwo};
\addlegendentry{$\inf_{v\in V_h}\|v-u\|_{H^1}/\|u\|_{H^1}$};
\addplot[Vbestapprox style] table[x=H, y=VprojErrors]{\OPTdatatwo};
\addlegendentry{$\inf_{v\in V_h}\|v-u\|_{V}/\|u\|_{V}$};
\logLogSlopeTriangle{0.2}{0.1}{0.08}{4}{L2error style};
\logLogSlopeTriangle{0.2}{0.1}{0.29}{3}{H1error style};
\logLogSlopeTriangle{0.2}{0.1}{0.63}{2}{Verror style};
\nextgroupplot[ymode=log, xmode=log,
ytick = {1e-6, 1e-5, 1e-4, 1e-3, 1e-2, 1e-1, 1e0, 1e1},
ymin=0.3e-6, ymax=1e1
]
\addplot[L2error style] table[x=H, y=L2errors]{\GENdatatwo};
\addplot[H1error style] table[x=H, y=H1errors]{\GENdatatwo};
\addplot[Verror style] table[x=H, y=Verrors]{\GENdatatwo};
\addplot[L2bestapprox style] table[x=H, y=L2projErrors]{\GENdatatwo};
\addplot[H1bestapprox style] table[x=H, y=H1projErrors]{\GENdatatwo};
\addplot[Vbestapprox style] table[x=H, y=VprojErrors]{\GENdatatwo};
\logLogSlopeTriangle{0.2}{0.1}{0.08}{4}{L2error style};
\logLogSlopeTriangle{0.2}{0.1}{0.29}{3}{H1error style};
\logLogSlopeTriangle{0.2}{0.1}{0.63}{2}{Verror style};
\nextgroupplot[ymode=log, xmode=log,  legend columns=3,
	legend to name=p3grouplegend,
ylabel=Problem~3,
	xlabel=$h$,
	ymin=0.5e-3, ymax=0.5e1
	]
\addplot[L2error style] table[x=H, y=L2errors]{\OPTdatathree};
\addlegendentry{$\|u_h-u\|_{L^2}/\|u\|_{L^2}$};
\addplot[H1error style] table[x=H, y=H1errors]{\OPTdatathree};
\addlegendentry{$\|u_h-u\|_{H^1}/\|u\|_{H^1}$};
\addplot[Verror style] table[x=H, y=Verrors]{\OPTdatathree};
\addlegendentry{$\|u_h-u\|_{V}/\|u\|_{V}$};
\addplot[L2bestapprox style] table[x=H, y=L2projErrors]{\OPTdatathree};
\addlegendentry{$\inf_{v\in V_h}\|v-u\|_{L^2}/\|u\|_{L^2}$};
\addplot[H1bestapprox style] table[x=H, y=H1projErrors]{\OPTdatathree};
\addlegendentry{$\inf_{v\in V_h}\|v-u\|_{H^1}/\|u\|_{H^1}$};
\addplot[Vbestapprox style] table[x=H, y=VprojErrors]{\OPTdatathree};
\addlegendentry{$\inf_{v\in V_h}\|v-u\|_{V}/\|u\|_{V}$};
\logLogSlopeTriangle{0.2}{0.1}{0.06}{1.5}{L2error style};
\logLogSlopeTriangle{0.2}{0.1}{0.55}{0.5}{H1error style};
\logLogSlopeTriangle{0.2}{0.1}{0.66}{0.33}{Verror style};
\nextgroupplot[ymode=log, xmode=log,
xlabel=$h$,
ymin=0.5e-3, ymax=0.5e1,
]
\addplot[L2error style] table[x=H, y=L2errors]{\GENdatathree};
\addplot[H1error style] table[x=H, y=H1errors]{\GENdatathree};
\addplot[Verror style] table[x=H, y=Verrors]{\GENdatathree};;
\addplot[L2bestapprox style] table[x=H, y=L2projErrors]{\GENdatathree};
\addplot[H1bestapprox style] table[x=H, y=H1projErrors]{\GENdatathree};
\addplot[Vbestapprox style] table[x=H, y=VprojErrors]{\GENdatathree};
\logLogSlopeTriangle{0.2}{0.1}{0.06}{1.5}{L2error style};
\logLogSlopeTriangle{0.2}{0.1}{0.55}{0.5}{H1error style};
\logLogSlopeTriangle{0.2}{0.1}{0.66}{0.33}{Verror style};
\end{groupplot}
\node [yshift=-2.5em, anchor=north] at ($(group c1r3.south)!0.5!(group c2r3.south)$) {\pgfplotslegendfromname{p1grouplegend}};
\end{tikzpicture}
\caption{Relative errors for the problems in Table~\ref{tab:Problems}.
The continuous lines represent the Galerkin relative errors, the dashed lines the best-approximation relative errors.
Each color corresponds to one of the three norms.
Left: $A_Q=10^{-2}$. Right: $A_Q=1$.}
\label{fig:ConvPlots}
\end{figure}

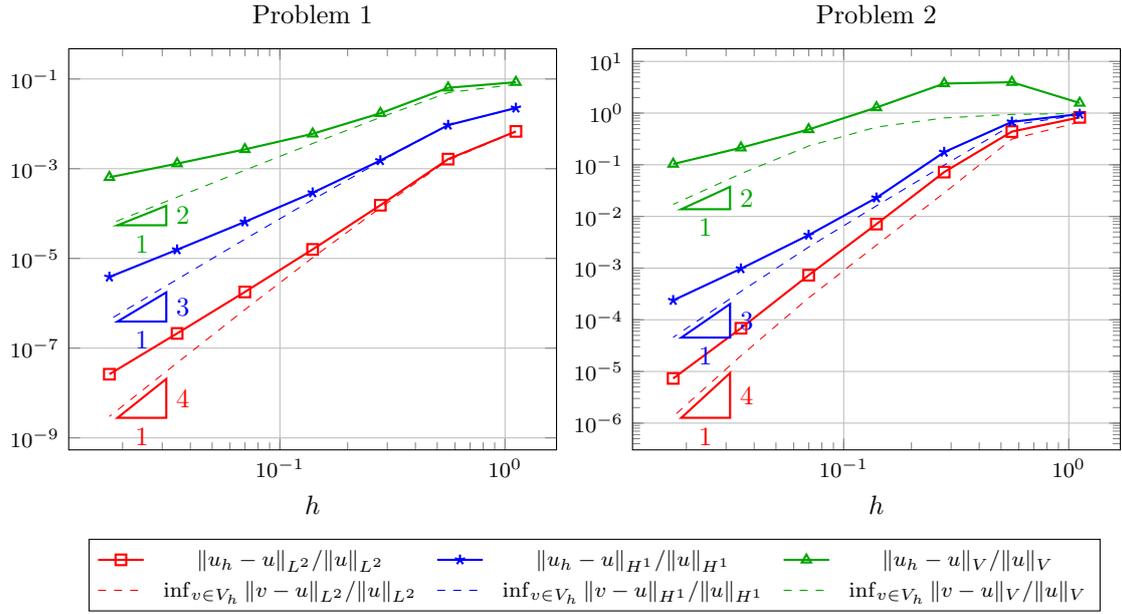
\begin{figure}[htb]
\centering
\begin{tikzpicture}
\pgfplotstableread[col sep=comma]{p1-ptypeCUSTOM-etypeRELATIVE--convTable.dat}{\degConvTwo}
\pgfplotstableread[col sep=comma]{p2-ptypeCUSTOM-etypeRELATIVE--convTable.dat}{\degConvThree}
\begin{groupplot}[
    group style={group size=2 by 1},
    width=0.5\linewidth,
    xlabel=$h$,
    legend style={column sep=0.5em},
    grid=major,
    height=6.5cm,
    ]
\nextgroupplot[ymode=log, xmode=log,  legend columns=3,
	legend to name=degConvLegend, title={Problem~1}]
\addplot[L2error style] table[x=H, y=L2errors] {\degConvTwo};
\addlegendentry{$\|u_h-u\|_{L^2}/\|u\|_{L^2}$};
\addplot[H1error style] table[x=H, y=H1errors] {\degConvTwo};
\addlegendentry{$\|u_h-u\|_{H^1}/\|u\|_{H^1}$};
\addplot[Verror style] table[x=H, y=Verrors] {\degConvTwo};
\addlegendentry{$\|u_h-u\|_{V}/\|u\|_{V}$};
\addplot[L2bestapprox style] table[x=H, y=L2projErrors] {\degConvTwo};
\addlegendentry{$\inf_{v\in V_h}\|v-u\|_{L^2}/\|u\|_{L^2}$};
\addplot[H1bestapprox style] table[x=H, y=H1projErrors] {\degConvTwo};
\addlegendentry{$\inf_{v\in V_h}\|v-u\|_{H^1}/\|u\|_{H^1}$};
\addplot[Vbestapprox style] table[x=H, y=VprojErrors] {\degConvTwo};
\addlegendentry{$\inf_{v\in V_h}\|v-u\|_{V}/\|u\|_{V}$};
\logLogSlopeTriangle{0.2}{0.1}{0.08}{4}{L2error style};
\logLogSlopeTriangle{0.2}{0.1}{0.32}{3}{H1error style};
\logLogSlopeTriangle{0.2}{0.1}{0.56}{2}{Verror style};
\nextgroupplot[ymode=log, xmode=log, title={Problem~2}]
\addplot[L2error style] table[x=H, y=L2errors] {\degConvThree};
\addplot[H1error style] table[x=H, y=H1errors] {\degConvThree};
\addplot[Verror style] table[x=H, y=Verrors] {\degConvThree};
\addplot[L2bestapprox style] table[x=H, y=L2projErrors] {\degConvThree};
\addplot[H1bestapprox style] table[x=H, y=H1projErrors] {\degConvThree};
\addplot[Vbestapprox style] table[x=H, y=VprojErrors] {\degConvThree};
\logLogSlopeTriangle{0.2}{0.1}{0.08}{4}{L2error style};
\logLogSlopeTriangle{0.2}{0.1}{0.28}{3}{H1error style};
\logLogSlopeTriangle{0.2}{0.1}{0.60}{2}{Verror style};
\end{groupplot}
\node [yshift=-2.4em, anchor=north] at ($(group c1r1.south)!0.5!(group c2r1.south)$) {\pgfplotslegendfromname{degConvLegend}};
\end{tikzpicture}
\caption{Convergence plots for Problem~1 and Problem~2 when $A_Q=0$ and $A_\OZ=1$ (i.e.\ without the volume least-squares term).
While rates are slightly suboptimal, convergence is stil achieved. Left: convergence plot of Problem~1. Right: convergence plot of Problem~2.}
\label{fig:degConvPlot}
\end{figure}

\begin{figure}[htb]
\centering
\pgfplotstableread[ col sep=comma,]{p1-GEN-conditionTable.dat}{\GENdata}
\pgfplotstableread[ col sep=comma,]{p1-OPT-conditionTable.dat}{\OPTdata}
\begin{tikzpicture}
\begin{loglogaxis}[
	xlabel=$h$,
	ylabel=$\kappa_2(\mat B)$,
	grid=major,
	width=.6\linewidth,
	height=5.5cm
	]
	\addplot[teal, mark=star, line width=1pt] table[x=H, y=Kconds]{\GENdata};
	\addlegendentry{$\kappa_2(\mat B)$ when $A_Q=1$};
	\addplot[orange, mark=square, line width=1pt] table[x=H, y=Kconds]{\OPTdata};
	\addlegendentry{$\kappa_2(\mat B)$ when $A_Q=10^{-2}$}
	\logLogCondSlopeTriangle{0.1}{-0.1}{0.75}{-4}{line width=1pt, teal};
	\logLogCondSlopeTriangle{0.1}{-0.1}{0.51}{-4}{line width=1pt, orange};
\end{loglogaxis}
\end{tikzpicture}
\caption{Condition number of the matrix $\mat B$ in \eqref{eq:matGalerkin} for Problem~1.
}
\label{fig:CondNumPlots}
\end{figure}
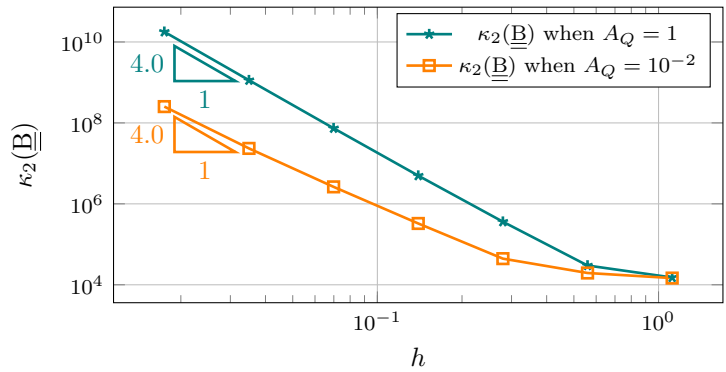

\subsection{Quasi-optimality}
Since the focus of the present work is on the design of a stable space--time formulation that can accommodate a range of discrete spaces, we study the quasi-optimality ratio of its solution.
The quasi-optimality ratio is also a measure of the dispersion and numerical pollution properties of the scheme, \cite{Moiola2014}.

In Table~\ref{tab:NumericalQO} we compare the values of the theoretical quasi-optimality bound $C_{qo}$ proved in Proposition~\ref{lemma:cea} against the ratios $\frac{\|u-u_h\|_{V}}{\inf_{v_h\in V_h}\|u-v_h\|_{V}}$ computed numerically.
We consider Problems~1 and~2 with parameters chosen as in the experiments of Figure~\ref{fig:ConvPlots}.
We also give the values of the quasi-optimality ratio in the $L^2(Q)$ and $H^1(Q)$ norms (first two columns).
We observe that the ratios obtained numerically are considerably better than the upper bound \eqref{eq:Cqo} (last column).
In particular, the numerical ratios in the third column are remarkably close to 1 for Problem~1, and only slightly larger for Problem~2.

\begin{table}[htb]
\centering
\begin{subtable}{\linewidth}
\centering
\pgfplotstableread[ col sep=comma]{p1-ptypeOPT-etypeRELATIVE--convTable.dat}{\OPTdata}
\pgfplotstableread[ col sep=comma]{p1-ptypeGEN-etypeRELATIVE--convTable.dat}{\GENdata}
\pgfplotstablenew[
col sep=comma,
create on use/H/.style={create col/copy column from table={\OPTdata}{H}},
create on use/L2Eopt/.style={create col/copy column from table={\OPTdata}{L2errors}},
create on use/L2PEopt/.style={create col/copy column from table={\OPTdata}{L2projErrors}},
create on use/L2QOopt/.style={create col/expr={\thisrow{L2Eopt}/\thisrow{L2PEopt}}},
create on use/L2Egen/.style={create col/copy column from table={\GENdata}{L2errors}},
create on use/L2PEgen/.style={create col/copy column from table={\GENdata}{L2projErrors}},
create on use/L2QOgen/.style={create col/expr={\thisrow{L2Egen}/\thisrow{L2PEgen}}},
create on use/H1Eopt/.style={create col/copy column from table={\OPTdata}{H1errors}},
create on use/H1PEopt/.style={create col/copy column from table={\OPTdata}{H1projErrors}},
create on use/H1QOopt/.style={create col/expr={\thisrow{H1Eopt}/\thisrow{H1PEopt}}},
create on use/H1Egen/.style={create col/copy column from table={\GENdata}{H1errors}},
create on use/H1PEgen/.style={create col/copy column from table={\GENdata}{H1projErrors}},
create on use/H1QOgen/.style={create col/expr={\thisrow{H1Egen}/\thisrow{H1PEgen}}},
create on use/VEopt/.style={create col/copy column from table={\OPTdata}{Verrors}},
create on use/VPEopt/.style={create col/copy column from table={\OPTdata}{VprojErrors}},
create on use/VQOopt/.style={create col/expr={\thisrow{VEopt}/\thisrow{VPEopt}}},
create on use/VEgen/.style={create col/copy column from table={\GENdata}{Verrors}},
create on use/VPEgen/.style={create col/copy column from table={\GENdata}{VprojErrors}},
create on use/VQOgen/.style={create col/expr={\thisrow{VEgen}/\thisrow{VPEgen}}},
create on use/QOconstGen/.style={create col/copy column from table={\GENdata}{QOconstEst}},
create on use/QOconstOpt/.style={create col/copy column from table={\OPTdata}{QOconstEst}},
columns={H,L2Eopt, L2PEopt, L2QOopt,L2Egen, L2PEgen, L2QOgen,H1Eopt, H1PEopt, H1QOopt,H1Egen, H1PEgen, H1QOgen,VEopt, VPEopt, VQOopt,VEgen, VPEgen, VQOgen, QOconstGen, QOconstOpt }
]{\pgfplotstablegetrowsof{\OPTdata}}\ALLdata
\pgfplotstabletypeset[
col sep=comma,
every head row/.style={
before row={
\toprule $h$
& \multicolumn{2}{c|}{$\frac{\|u-u_h\|_{L^2(Q)}}{\inf_{v_h\in V_h}\|u-v_h\|_{L^2(Q)}}$} & \multicolumn{2}{c|}{$\frac{\|u-u_h\|_{H^1(Q)}}{\inf_{v_h\in V_h}\|u-v_h\|_{H^1(Q)}}$} & \multicolumn{2}{c|}{$\frac{\|u-u_h\|_{V}}{\inf_{v_h\in V_h}\|u-v_h\|_{V}}$} & \multicolumn{2}{c}{$\frac{C_b}{\alpha_b}$} \\
\hline
}, after row=\hline},
every last row/.style={after row=\bottomrule},
columns={H, L2QOopt, L2QOgen, H1QOopt, H1QOgen, VQOopt, VQOgen, QOconstOpt, QOconstGen},
every column/.style={fixed, fixed zerofill, precision=2, set thousands separator={}},
columns/H/.style={column name={},sci, sci zerofill},
columns/L2QOopt/.style={column name={\small$A_Q=10^{-2}$}, column type=|c},
columns/L2QOgen/.style={column name={\small$A_Q=1$}, column type=c|},
columns/H1QOopt/.style={column name={\small$A_Q=10^{-2}$}},
columns/H1QOgen/.style={column name={\small$A_Q=1$}},
columns/VQOopt/.style={column name={\small$A_Q=10^{-2}$}, column type=|c},
columns/VQOgen/.style={column name={\small$A_Q=1$}},
columns/QOconstOpt/.style={column name={\small$A_Q=10^{-2}$}, column type=|c, precision=1 },
columns/QOconstGen/.style={column name={\small$A_Q=1$}, column type= c, precision=1},
]{\ALLdata}
\caption{Problem~1}
\end{subtable}
\begin{subtable}{\linewidth}
\centering
\pgfplotstableread[ col sep=comma]{p2-ptypeOPT-etypeRELATIVE--convTable.dat}{\OPTdata}
\pgfplotstableread[ col sep=comma]{p2-ptypeGEN-etypeRELATIVE--convTable.dat}{\GENdata}
\pgfplotstablenew[
col sep = comma,
create on use/H/.style={create col/copy column from table={\OPTdata}{H}},
create on use/L2Eopt/.style={create col/copy column from table={\OPTdata}{L2errors}},
create on use/L2PEopt/.style={create col/copy column from table={\OPTdata}{L2projErrors}},
create on use/L2QOopt/.style={create col/expr={\thisrow{L2Eopt}/\thisrow{L2PEopt}}},
create on use/L2Egen/.style={create col/copy column from table={\GENdata}{L2errors}},
create on use/L2PEgen/.style={create col/copy column from table={\GENdata}{L2projErrors}},
create on use/L2QOgen/.style={create col/expr={\thisrow{L2Egen}/\thisrow{L2PEgen}}},
create on use/H1Eopt/.style={create col/copy column from table={\OPTdata}{H1errors}},
create on use/H1PEopt/.style={create col/copy column from table={\OPTdata}{H1projErrors}},
create on use/H1QOopt/.style={create col/expr={\thisrow{H1Eopt}/\thisrow{H1PEopt}}},
create on use/H1Egen/.style={create col/copy column from table={\GENdata}{H1errors}},
create on use/H1PEgen/.style={create col/copy column from table={\GENdata}{H1projErrors}},
create on use/H1QOgen/.style={create col/expr={\thisrow{H1Egen}/\thisrow{H1PEgen}}},
create on use/VEopt/.style={create col/copy column from table={\OPTdata}{Verrors}},
create on use/VPEopt/.style={create col/copy column from table={\OPTdata}{VprojErrors}},
create on use/VQOopt/.style={create col/expr={\thisrow{VEopt}/\thisrow{VPEopt}}},
create on use/VEgen/.style={create col/copy column from table={\GENdata}{Verrors}},
create on use/VPEgen/.style={create col/copy column from table={\GENdata}{VprojErrors}},
create on use/VQOgen/.style={create col/expr={\thisrow{VEgen}/\thisrow{VPEgen}}},
create on use/QOconstGen/.style={create col/copy column from table={\GENdata}{QOconstEst}},
create on use/QOconstOpt/.style={create col/copy column from table={\OPTdata}{QOconstEst}},
columns={H,L2Eopt, L2PEopt, L2QOopt,L2Egen, L2PEgen, L2QOgen,H1Eopt, H1PEopt, H1QOopt,H1Egen, H1PEgen, H1QOgen,VEopt, VPEopt, VQOopt,VEgen, VPEgen, VQOgen, QOconstGen, QOconstOpt }
]{\pgfplotstablegetrowsof{\OPTdata}}\ALLdata
\pgfplotstabletypeset[
col sep=comma,
every head row/.style={
before row={
\toprule $h$
& \multicolumn{2}{c|}{$\frac{\|u-u_h\|_{L^2(Q)}}{\inf_{v_h\in V_h}\|u-v_h\|_{L^2(Q)}}$} & \multicolumn{2}{c|}{$\frac{\|u-u_h\|_{H^1(Q)}}{\inf_{v_h\in V_h}\|u-v_h\|_{H^1(Q)}}$} & \multicolumn{2}{c|}{$\frac{\|u-u_h\|_{V}}{\inf_{v_h\in V_h}\|u-v_h\|_{V}}$} & \multicolumn{2}{c}{$\frac{C_b}{\alpha_b}$} \\
\hline
}, after row=\hline},
every last row/.style={after row=\bottomrule},
columns={H, L2QOopt, L2QOgen, H1QOopt, H1QOgen, VQOopt, VQOgen, QOconstOpt, QOconstGen },
every column/.style={fixed, fixed zerofill, precision=2, set thousands separator={}},
columns/H/.style={column name=,sci, sci zerofill},
columns/L2QOopt/.style={column name={\small$A_Q=10^{-2}$}, column type=|c},
columns/L2QOgen/.style={column name={\small$A_Q=1$}, column type=c|},
columns/H1QOopt/.style={column name={\small$A_Q=10^{-2}$}},
columns/H1QOgen/.style={column name={\small$A_Q=1$}},
columns/VQOopt/.style={column name={\small$A_Q=10^{-2}$}, column type=|c},
columns/VQOgen/.style={column name={\small$A_Q=1$}},
columns/QOconstOpt/.style={column name={\small$A_Q=10^{-2}$}, column type=|c, precision=1},
columns/QOconstGen/.style={column name={\small$A_Q=1$}, column type= c, precision=1},
]{\ALLdata}
\caption{Problem~2}
\end{subtable}
\caption{Numerical quasi-optimality constants for Problems~1 and 2.
Ratios between the Galerkin solution errors and the best-approximation errors in $L^2(Q)$, $H^1(Q)$ and $V$ norms, for $A_Q\in\{10^{-2},1\}$,
$A_\OZ=1$, and $\xi=1$, $\beta=\beta^\#$, $\nu=2$ as in \eqref{eq:ParamChoice}.
The last two columns display the theoretical quasi-optimality bound in the $V$ norm proved in \eqref{eq:Cqo}.}
\label{tab:NumericalQO}
\end{table}

\subsection{Energy conservation}
Remark~\ref{remark:EnergyDecay} ensures the convergence to 0 of \emph{(i)} the difference $|\mathcal E(t;u)-\mathcal E(t;u_h)|$ between the energies of the IBVP and the Galerkin solutions, and of \emph{(ii)} the error energy $\mathcal E(t;u-u_h)$.
The first of these quantities is bounded by a multiple $\|u-u_h\|_V$ and the second by a multiple of $\|u-u_h\|_V^2$.

Figure~\ref{fig:EnergyError} reports some numerical results concerning these two measures for Problem~2.
Recall that the solution of this IBVP is a wave packet hitting the boundary (with impedance parameter $\theta\ne1$) and being partially reflected in the domain; its energy decreases considerably approximately between the time instants $0.3$ and $0.7$, and is roughly constant otherwise.

The top panel of Figure~\ref{fig:EnergyError} plots the energy error in $(0,T)$ for Problem~2 (divided at each time $t$ by the exact solution energy to take into account its evolution), where the Galerkin solution is computed with $N_x=N_t=128$.
We observe that this error is bounded uniformly in time and does not increase.

The lower panel of Figure~\ref{fig:EnergyError} shows the convergence of the $L^\infty(0,T)$ norm of the error energy
for a sequence of refined meshes.
We observe that the convergence of this quantity is much faster than the rate expected from the bound with $\|u-u_h\|_V^2$.

\begin{figure}[htb]
\centering
\begin{tikzpicture}
\pgfplotstableread[col sep=comma]{EnergyTableTS-p2-N128.dat}{\EnergyTable}
\pgfplotstableread[col sep=comma]{p2-ptypeOPT-etypeRELATIVE--energyConvTable.dat}{\EnergyConvTable}
\centering
\begin{groupplot}[group style={group size=1 by 2}]
\nextgroupplot[enlarge x limits=false, ymode=log, width=\linewidth, height=5cm, ytick={0.0001, 0.00001, 0.000001, 0.0000001, 0.00000001}, xlabel=$t$, grid=both]
\addplot[mark=none, line width=0.5pt, line join=bevel, blue] table[x=Ts, y=error]{\EnergyTable};
\addlegendentry{$|\mathcal E (t; u_h) - \mathcal E(t; u)|/\mathcal E(t; u)$}

\nextgroupplot[ymode=log, xmode=log, width=\linewidth, height=5cm, legend pos=south east, grid=both, xlabel=$h$,
ytick={1, 0.01, 0.0001, 0.000001, 0.00000001, 0.0000000001 }, ymin=0.000000000001]
\addplot[mark=star, blue, line width=1pt] table[x=H, y=EnergyNormErrors]{\EnergyConvTable};
\addlegendentry{$\sup_{t\in(0,T)}\mathcal E(t; u-u_h)/\sup_{t\in(0,T)}\mathcal E(t; u)$};
\addplot[Vbestapprox style, line width=1pt] table[ x=H, y=VprojErrors]{\EnergyConvTable};
\addlegendentry{$(\inf_{v_h\in V}\|u-v_h\|_V/\|u\|_V)^2$};
\logLogSlopeTriangle{0.25}{0.15}{0.53}{4}{Verror style};
\logLogSlopeTriangle{0.25}{0.15}{0.15}{7}{blue};
\end{groupplot}
\end{tikzpicture}
\caption{Top: relative error of the Galerkin solution energy for the approximation of Problem~2,  for $N_x = N_t = 128$, $A_Q=10^{-2}$ and $A_\OZ=1$, plotted for $768$ equispaced time instants in $[0,T]$.
Bottom: convergence of the numerical energy error in $L^\infty(0, T)$ norm for $N_x=N_t\in\{2^n,\; n=1,\ldots,8\}$.}
\label{fig:EnergyError}
\end{figure}
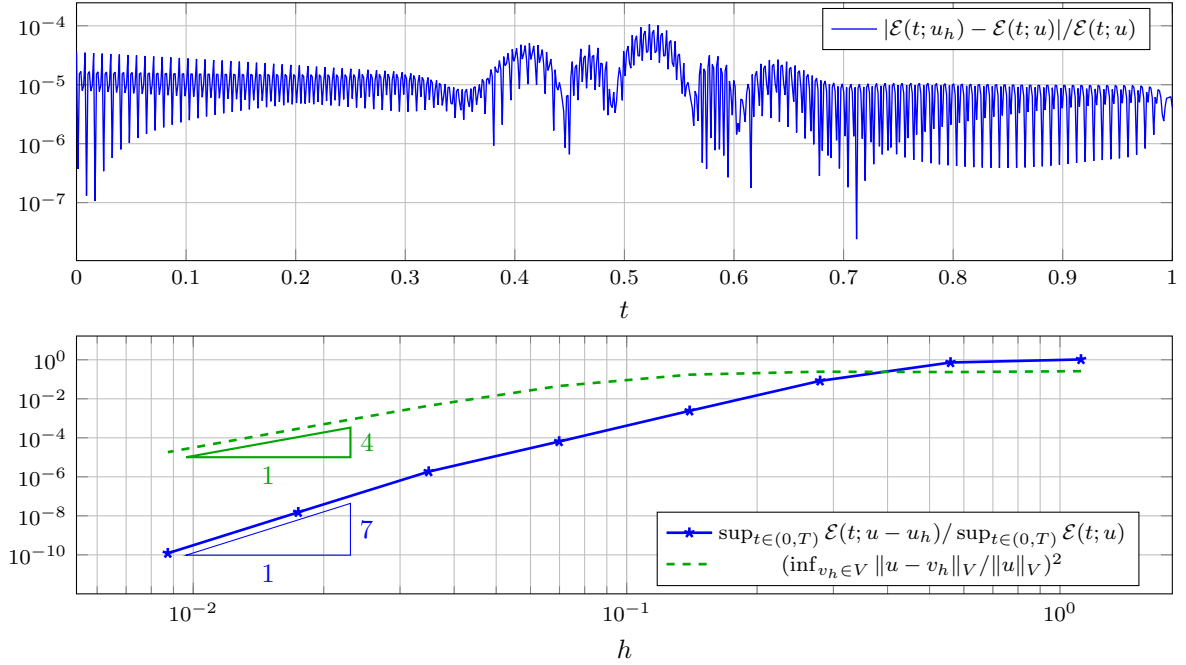

\section{Conclusions and further work}\label{sec:Conclusions}

We have derived a space--time variational formulation for a class of impedance and mixed impedance--Dirichlet initial--boundary value problems for the acoustic wave equation that is continuous and coercive in a norm stronger than $H^1$ on the space--time cylinder.
The main assumption is that the scalar product $\vx\cdot\vn$ of the position vector and the outgoing unit normal is positive on the impedance boundary and negative on the Dirichlet boundary; this includes the case of a star-shaped impedance domain, possibly containing a star-shaped sound-soft scatterer.
The derivation of the formulation relies on the use of classical Morawetz multipliers, following the analogue result for the Helmholtz equation of \cite{Moiola2014}.
The bilinear and the linear forms only include space and time partial derivatives of test and trial functions, together with linear (in $\vx$ and $t$) coefficients and a handful of parameters that can be easily chosen.
The proof of the coercivity and continuity estimates only requires elementary vector-calculus tools.
The formulation can be discretised with any space--time $H^2$-conforming space, leading in all cases to well-posed and quasi-optimal Galerkin methods.

We list a few improvements and extensions of the proposed method that might be considered, some of which are already underway, see \cite{Bignardi2025}.
\begin{itemize}
	\item The extension to a class of non-trapping, non-constant material parameters, using the Morawetz techniques of, e.g., \cite{Moiola2019} and \cite{Graham2019}.

\item The extension to more general classes of bounded domains, possibly using different Morawetz multipliers.
The extension to unbounded domains by means of high-order absorbing boundary conditions (ABC), perfectly-matched layers (PML), Dirichlet-to-Neumann maps (DtN).

\item The extension to vector wave problems, in particular electromagnetic and elastic waves.

\item The proof of optimal convergence rates in $L^2(Q)$ and $H^1(Q)$ norms.

\item The proof or disproof of the equality between the function spaces $V$ and $W$; see Remark~\ref{remark:V=W?}.
\item The precise characterisation of the IBVP data for which the solution of the proposed formulation coincides with the desired one (those in Theorem~\ref{thm:t_regularity} are sufficient but possibly not necessary).

\item A systematic numerical study of the proposed formulation in space dimensions higher than $1$, with general spline spaces and other discrete spaces on simplicial and unstructured meshes, including a parameter-sensitivity and dispersion analysis, and a comparison against established methods.

\item The lifting of the $H^2(Q)$-conformity requirement on the Galerkin space, by use of continuous-interior penalty (CIP) or by devising a mixed formulation that exploits Morawetz multipliers.

\item The development of techniques such as matrix-compression, preconditioners, a-posteriori estimators and adaptivity, that make use of coercivity to reduce the computational cost of the method.
\end{itemize}

\section*{Acknowledgements}
The authors are grateful to
Martin Berggren (Ume\aa{}),
Th\'eophile Chaumont-Frelet (INRIA),
Matteo Fornoni (Pavia),
Linus H\"agg (Ume\aa{}),
Michael Multerer (USI),
Ilaria Perugia (Vienna),
Andrea Signori (Milano Politecnico),
Euan Spence (Bath), and
Pietro Zanotti (Milano)
for helpful discussions.
The authors also acknowledge the support from the PRIN projects NA-from-PDEs, ASTICE (202292JW3F) and COSMIC (2022A79M75), GNCS-INDAM and PNRR-M4C2-I1.4-NC-HPC-Spoke6, funded by the European Union - Next Generation EU.

\bibliographystyle{abbrv}
\bibliography{references}
\end{document}